\newcommand{\id}{\operatorname{id}}
\newcommand{\End}{\operatorname{End}}
\newcommand{\Aut}{\operatorname{Aut}}
\renewcommand{\S}{\mathcal{S}}
\newcommand{\Z}{\mathbb{Z}}
\newcommand{\Ann}{\operatorname{Ann}}
\newcommand{\Soc}{\operatorname{Soc}}
\newcommand{\Fix}{\operatorname{Fix}}
\newcommand{\Stab}{\operatorname{Stab}}
\newcommand{\Triv}{\operatorname{Triv}}
\newcommand{\op}{\operatorname{op}}
\newcommand{\A}{\operatorname{a}}
\newcommand{\Con}{\operatorname{Con}}
\newcommand{\con}{\operatorname{con}}
\newtheorem{thm}{Theorem}[section]
\newtheorem{lem}[thm]{Lemma}
\newtheorem{cor}[thm]{Corollary}
\newtheorem{pro}[thm]{Proposition}
\newtheorem{defn}[thm]{Definition}
\newtheorem*{convention}{Convention}
\newtheorem{rem}[thm]{Remark}
\newtheorem{exa}[thm]{Example}
\title{Common divisor graphs for skew braces}
\author[1]{Silvia Properzi}
\ead{silvia.properzi@vub.be}
\author[2]{Arne Van Antwerpen}
\ead{arne.vanantwerpen@ugent.be}
\date{}
\address[1]{
Department of Mathematics and Data Science, Vrije Universiteit Brussel\\ Pleinlaan 2, 1050 Brussel, Belgium }
\address[2]{Department of Mathematics: Algebra and Geometry, Universiteit Gent\\ Krijgslaan 281, 9000 Gent, Belgium }
\begin{document}
\begin{abstract}
We introduce two common divisor graphs associated with a finite skew brace, based on its $\lambda$- and $\theta$-orbits. We prove that the number of connected components is at most two and the diameter of a connected component is at most four. Furthermore, we investigate their relationship with isoclinism. Similarly to its group theoretic inspiration, the skew braces with a graph with two disconnected vertices are very restricted and are determined. Finally, we classify all finite skew braces with a graph with one vertex, where four infinite families arise.
\end{abstract}
 \begin{keyword}
     Skew brace \sep isoclinism \sep common-divisor graph
     \MSC[2020]{16T25, 05C25, 20N99}
 \end{keyword}

\maketitle

\section{Introduction}
In 1992 Drinfel'd \cite{MR1183474} proposed the study of set-theoretic solutions of the Yang-Baxter equation. Recall that a set-theoretic solution of the Yang-Baxter equation is a tuple $(X,r)$ where $X$ is a non-empty set and $r:X \times X \longrightarrow X \times X$ is a map such that on $X^3$ it holds that $$(r\times \id_X)(\id_X \times r)(r \times \id_X)=(\id_X \times r)(r \times \id_X)(\id_X \times r).$$ In particular, the class of bijective non-degenerate set-theoretic solutions has received extensive attention \cite{bachiller2018solutions, MR3647970, jespers2019factorizations,lebed2019structure,rump2014brace,smoktunowicz2022passage}, where recently a full subclass of solutions was characterized by combinatorial objects \cite{agore2023settheoretic}.
In 2007, Rump \cite{MR2278047} introduced the notion of a brace, which was extended by Guarnieri and Vendramin \cite{MR3647970} to skew braces in 2017. These algebraic structures both generate and govern bijective set-theoretic solutions, in the sense that every bijective non-degenerate solution has an associated skew brace and vice versa. Recently, links between skew braces and different fields of mathematics, such as differential geometry \cite{rump2014brace} and pre-Lie algebras \cite{smoktunowicz2022passage}, have been shown and they motivate, in addition to algebraic curiosity, their independent study. 
A skew brace
is a triple $(A,+,\circ)$, where 
$(A,+)$ and $(A,\circ)$ are groups and
the compatibility condition
\[
a\circ(b+c)=a\circ b-a+a\circ c
\]
holds for all $a,b,c\in A$. 

An important tool in the study of skew braces is the group homomorphism $\lambda\colon (A,\circ)\to\Aut(A,+)$, $a\mapsto(b\mapsto\lambda_a(b)=-a+a\circ b)$. It is crucial for the results in this paper, as it encodes an action of 
the group $(A,\circ)$ on $(A,+)$ by automorphisms. 
Furthermore, the map $\lambda$ is closely related to the set-theoretic solutions associated with skew braces. Denote the conjugation by $a\in A$ in $(A,+)$ by $\sigma_a$. If $A$ is a skew brace, 
then the map $r_A\colon A\times A\to A\times A$ is a bijective non-degenerate solution of the Yang-Baxter equation \cite{MR3647970} with, 
\[
r_A(a,b)=(\lambda_a(b),\lambda_{\lambda_a(b)}^{-1}(\sigma_{\lambda_a(b)}(b))).
\]
Moreover, every bijective non-degenerate solution can be constructed from a skew brace~\cite{bachiller2018solutions, MR3763907}.

On the one hand, Jacobson radical rings 
are prominent examples of skew braces~\cite{MR2278047}. This suggests using 
ring-theoretical methods to study the skew braces and the Yang-Baxter equation which was explored in \cite{MR3177933,MR3861714,MR3814340}. On the other hand, 
a group $(A,+)$ gives rise to the trivial skew brace $(A,+,+)$ and $(A,+^{op},+)$, with $a+^{op}b=b+a$ for $a,b \in A$. The latter is of particular importance, as then $\lambda_a$ is exactly conjugation by $-a$, which motivates that both group-theoretical tools and notions are applied to skew left braces, e.g. \cite{MR3957824,MR3763907}. 

Clearly, the maps $\lambda_a$ and $\sigma_b$ and their interplay capture the structure of their skew brace and contain valuable information on its associated solution, as evidenced by \cite{MR4388351,castelli2023studying,colazzo2023simple}. In this context, motivated by work of  Bertram, Herzog, and Mann \cite{MR1099007}, 
we study ``common divisor'' graphs $\Lambda(A)$ and $\Theta(A)$ related to respectively
non-trivial 
$\lambda$-orbits of $A$ (see Definition~\ref{defn:graph}) and non-trivial $\theta$-orbits of $A$, which is an action from the natural semi-direct product $(A,+) \rtimes_{\lambda} (A,\circ)$ on $A$. 

In Section~\ref{comdivgraph:sec} we prove some generalities on common divisor graphs which arise from the action of a group $(G,\cdot)$ on a group $(H,\star)$, where $|G|$ and $|H|$ have the same prime divisors.  For instance, we obtain that the number of connected components is at most $2$ and that their respective diameters are at most $4$. 

As noted in Section~\ref{sec:comdivskew} and Section~\ref{sec:propsisoclinism} these general results hold in particular for $\Lambda(A)$ and $\Theta(A)$, where we focus our attention on these graphs and examine the interplay of skew brace theoretical notions on these graphs. In particular, we show that left nilpotent skew braces of nilpotent type have connected graphs of diameter at most $2$ and show that the graphs are invariant under isoclinism of skew braces, provided they are of the same size. Furthermore, we show that $\Theta(A)$ contains a homomorphic image of the common divisor graph $\Gamma(A,+)$ as introduced by Bertram, Herzog and Mann \cite{MR1099007}. Lastly, we end Section~\ref{sec:comdivskew} by noting that the $\theta$-orbits correspond to the largest homomorphic image of the solution $(A,r_A)$ that is a twist solution, i.e. $r(x,y)=(y,x)$. Moreover, it is noted that considering the $\theta$-graph of a subsolution of $(A,r_A)$ consisting of generators is an induced subgraph of $\Theta(A)$. 

In Section~\ref{examples}, we compute the graphs for all skew braces
of size $pq$ and $p^2$, where $p$ and $q$ are prime numbers. 

In the remaining sections of this article we deal with extremal cases. In Section~\ref{sec:2vertices} we examine skew braces with $\lambda$- or $\theta$-graph equal to a graph with $2$ disconnected vertices. In the former case, we show this corresponds exactly to the almost trivial skew brace on $S_3$, which is also the sole case arising in the common divisor graph of Bertram, Herzog and Mann. The skew braces with a $\theta$-graph with $2$ disconnected vertices are more numerous, but restricted to four skew braces of order $6$.

The final sections examine the skew braces with a $\lambda$- or $\theta$-graph with exactly $1$ vertex.
In Theorem~\ref{thm: onevertex:general}, we present a full classification
of finite skew braces whose $\lambda$-graph has only one vertex into $4$ infinite families. This case can not appear in the 
 conjugacy class graph of Bertram, Herzog and Mann\cite{MR1099007}, as there is no finite group with a single non-central conjugacy class. Hence, it illustrates that skew braces, even though they seem similar, can behave very differently than groups. Finally, skew braces with a one-vertex $\theta$-graph are necessarily of Abelian type. Thus Theorem~\ref{thm:onevertextheta} provides a full classification.

\section{Preliminaries}

Recall that a \emph{skew brace} is a triple $(A,+,\circ),$ where $(A,+)$ and $(A,\circ)$ are groups such that for all $a,b,c \in A$ a skew left distributivity holds $$a\circ(b+c)=a\circ b-a+a\circ c,$$ where $-a$ denotes the inverse of $a$ in $(A,+)$. Similarly, the inverse of an element $a\in A$ with respect to the operation $\circ$ is denoted by $a'.$
Following the standard terminology, if $(A, +)$ has the 
property $\chi$ (for example abelian, nilpotent,...), we say that $A$ is a skew brace of $\chi$ \emph{type}.
If a skew brace $A$ also satisfies the skew right distributivity
$$(b+c)\circ a=b\circ a-a+c\circ a,$$
$A$ is called \emph{two-sided}.
A \emph{bi-skew brace} is a skew brace $(A,+,\circ)$ 
such that also $(A,\circ,+)$ is a skew brace

Any group $G$ yields at least two skew brace structures by setting $g+h=g\circ h=gh$ or $h+g=g\circ h=gh$.
The first one is the \emph{trivial skew brace} $\Triv(G)$ on $G$. 
The second one is the \emph{almost trivial skew brace} $\op\Triv(G)$ on $G$.

Let $A$ be a skew brace. Then there is an action by automorphisms of $(A,\circ)$ on $(A,+)$ defined by
\[
\lambda_a(x)=-a+a\circ x
\]
and an action by automorphisms of $(A,+)\rtimes_{\lambda} (A,\circ)$ on $(A,+)$ defined by 
\[
\theta_{(a,b)}(c)=a+\lambda_b(c)-a.
\]
An \emph{ideal} of $A$ is a subset $I$ of $A$ such that $(I,+)$ is a normal subgroup of $(A,+),$ $(I,\circ)$ is a normal subgroup of $(A,\circ)$  and $\lambda_a(I)\subseteq I$ for all $a\in A.$ 
The \emph{socle of $A$} is the ideal $\Soc(A)=\ker\lambda\cap Z(A,+)$ and 
the \emph{annihilator of $A$} is the ideal $\Ann(A)=\Soc(A)\cap Z(A,\circ)$.

For all $a,b\in A$ we denote $a*b=\lambda_a(b)-b$.
Let $X,Y$ be subsets of $A$. Then $X*Y$ is the additive subgroup of $A$ generated by $x*y$ for all $x\in X$ and $y\in Y$.
We set also $A^{n+1}=A*A^n$ for all $n>1$ and $A^1=A$. Then $A$ is \emph{left nilpotent} if there exists a positive integer $n$ such that $A^n=\{0\}$.
The minimal positive integer $n$ such that $A^{n+1}=0$
is called the \emph{nilpotency class} of $A$.
The \emph{commutator} $A'$ of $A$ is the additive
subgroup of $A$ generated by the commutator $[A, A]_+$ of $(A,+)$ and $A^2$.

Let $A$ be a skew brace and $x\in A$. We denote the \emph{stabilizer of $x$ in $A$} by $\Stab_A(x)=\{a\in A\colon \lambda_a(x)=x\}$, or simply $\Stab(x)$ when it is clear the skew brace we are considering.
The \emph{$\lambda$-orbit of $x$ in $A$} is $\Lambda_A(x)=\{\lambda_a(x)\colon a\in A\}$, or simply $\Lambda(x)$.
Moreover, $\Fix(A)=\{a\in A\colon \lambda_x(a)=a\colon \textnormal{ for all } x\in A\}$ is the set of elements with a trivial $\lambda$-orbit.
For every $m\in\Z_{\geq 0}$ we denote by $\ell_A(m)$ (or simply by $\ell(m)$)
the number of $\lambda$-orbits of $A$ of size $m$.
By the Orbit-Stabilizer theorem, if $A$ is a finite skew brace,
then $\left|\Lambda(a)\right|=\frac{|A|}{\left|\Stab(a)\right|}$ for every $a\in A$.
Similarly, the \emph{$\theta$-stabilizer of $x$ in $A$} 
is $\Stab_\theta(x)=\{a\in A\colon \theta_a(x)=x\}$,
the \emph{$\theta$-orbit of $x$ in $A$} is $\Theta_A(x)=\{\theta_{(a,b)}(x) \colon (a,b)\in (A,+)\rtimes_{\lambda} (A,\circ)\}$, or simply $\Theta(x)$.
Moreover, the set of elements with a trivial $\theta$-orbit is
\[
\Fix_\theta(A)=\{a\in A\colon \theta_{(x,y)}(a)=a\textnormal{ for all } (x,y)\in (A,+)\rtimes_{\lambda} (A,\circ)\}
\]
and it is easy to show that $\Fix_\theta(A)=\Fix(A)\cap Z(A,+)$.
For every $m\in\Z_{\geq 0}$ we  denote by $t_A(m)$ (or simply by $t(m)$)
the number of $\theta$-orbits of $A$ of size $m$.
By the Orbit-Stabilizer theorem, for a finite skew brace $A$ and $a\in A$ $\left|\Theta(a)\right|=\frac{|A|^2}{\left|\Stab_\theta(a)\right|}$ for every $a\in A$.

We use the word \emph{graph} to denote
an undirected simple graph, i.e. 
a set of \emph{vertices} and a set of 2-subsets of vertices called \emph{edges}.
If $\{v,w\}$ is an edge we say that the vertices $v$ and $w$ are \emph{adjacent}.
Two graphs are \emph{isomorphic} if there is a bijection
between the set of vertices of two graphs such that two vertices are adjacent if and only if their images are adjacent.
A graph is called \emph{complete} if every pair of vertices are adjacent.
A \emph{path of length $r$ from $v$ to $w$} in a graph is 
a sequence of $r+1$ vertices starting with $v$ and ending with $w$ such that
consecutive vertices are adjacent.
A graph is \emph{connected} if there is a path between any two vertices.
An induced subgraph that is maximal subject to being connected is a \emph{connected component}.
If $\Gamma$ is a graph, the distance $d_{\Gamma}(v,w)$, or simply $d(v,w)$,
between two connected vertices $v$ and $w$ of $G$
is the length of a shortest path connecting them.
If $v$ and $w$ are not connected, then 
we set $d(v,w)=\infty$.
The \emph{diameter} of  a connected graph
$\Gamma$ is $d(\Gamma)=\max_{v,w}d_{\Lambda}(v,w)$.

Given a group $(G,\cdot)$ and a subset $S$
we write $\langle S\rangle_{\cdot}$
or simply $\langle S\rangle$ to indicate
the subgroup of $G$ generated by $S$.
We also write $S\leq (G,\cdot)$ (or simply $S\leq G$)
if $S$ is a subgroup of $G$. 
Moreover, given $S_1$ and $S_2$ two subsets of $G$, we denote
\[
S_1S_2=S_1\cdot S_2=\{xy\colon x\in S_1\text{ and }y\in S_2\}\quad \text{and}\quad S^{-1}_1=\{x^{-1}\colon x\in S_1\},
\]
simplifying the notation to $S_1y$ when $S_2=\{y\}$.
If we are considering an additive group $(G,+)$ we
denote $S^{-1}$ by $-S$.

\begin{rem}
\label{set ker:rem}
    Let $(G,\cdot)$ be a group and $S$ a subset of $G$. Then 
    \[
    \ker S=\{x\in G\colon Sx=S\}\leq G
    \]
    and $S\ker S =S$. So $S$ is a union of cosets of $\ker S$, thus $|\ker S|$ divides $|S|$.
\end{rem}

\section{Common divisor graph}\label{comdivgraph:sec}
In this section, we introduce a graph measuring the coprimality of non-trivial orbits of a group action.
As will be shown, these can be thought of as extensions of the graph introduced by Bertram, Herzog and Mann \cite{MR1099007}.
First, we establish relations between sizes of orbits, which play a crucial role in later results. 
Furthermore, we show that the number of connected components of these graphs is at most $2$ and we bound the maximal distance between two orbits.

\begin{convention}
    Throughout this section, we will consider two finite groups
$G$ and $H$ and an action $\alpha:G\to\Aut(H)$ by automorphisms. We will also denote by $\mathcal{O}(a)$
the orbit of $a\in H$ under the action $\alpha$.
We say that an orbit $\mathcal{O}(a)$ is non-trivial, if $|\mathcal{O}(a)|>1$.
\end{convention}

\begin{defn}
\label{defn:common divisor graph}
We define $\mathcal{C}(\alpha)$ as the graph with vertices the non-trivial orbits of the action and two different vertices $L_1,L_2$ are adjacent if $\gcd(|L_1|,|L_2|)\neq 1$.
\end{defn}

\begin{exa}
\label{exa: conjugacy class graph}
Let $G$ be a finite group and let $con:G\to\Aut(G)$ be such that $\con(g)$ is conjugation by $g$. Then $\mathcal{C}(\con)$ is the graph $\Gamma(G)$, as defined by Bertram, Herzog and Mann in~\cite{MR1099007}.
\end{exa}

Of particular importance is the case that the orders of both groups have the same prime factors. The following proposition shows that coprime orbits can be combined into a new orbit.
\begin{pro}
\label{pro: coprime orbits}
Suppose that $|G|$ and $|H|$ have the same prime factors. If $a,b\in H$ such that $|\mathcal{O}(a)|$ and $|\mathcal{O}(b)|$ are coprime, then $\Stab(a)\Stab(b)=G$ and $\mathcal{O}(a)\mathcal{O}(b)=\mathcal{O}(ab)$. 
\end{pro}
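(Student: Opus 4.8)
The plan is to prove the two assertions in turn, translating the coprimality hypothesis into a statement about subgroup indices via the Orbit–Stabilizer theorem. Writing $A=\Stab(a)$ and $B=\Stab(b)$, the Orbit–Stabilizer theorem gives $|\mathcal{O}(a)|=[G:A]$ and $|\mathcal{O}(b)|=[G:B]$, so the hypothesis says precisely that $[G:A]$ and $[G:B]$ are coprime. To deduce $AB=G$ I would invoke the standard counting identity $|AB|\,|A\cap B|=|A|\,|B|$ together with the fact that both $[G:A]$ and $[G:B]$ divide $[G:A\cap B]$ (since, e.g., $[G:A\cap B]=[G:A]\,[A:A\cap B]$). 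Because the two indices are coprime, their product divides $[G:A\cap B]$, while the same factorization gives $[G:A\cap B]=[G:A]\,[A:A\cap B]\le [G:A]\,[G:B]$; hence $[G:A\cap B]=[G:A]\,[G:B]$, and substituting back into the counting identity yields $|AB|=|G|$, so $AB=G$. This settles the first claim.

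For the second claim I would treat the two inclusions separately. Since each $\alpha(g)$ is an automorphism of $H$, we have $\alpha(g)(ab)=\alpha(g)(a)\,\alpha(g)(b)$, so every element of $\mathcal{O}(ab)$ lies in $\mathcal{O}(a)\mathcal{O}(b)$, giving $\mathcal{O}(ab)\subseteq\mathcal{O}(a)\mathcal{O}(b)$ immediately. For the reverse inclusion a typical element of $\mathcal{O}(a)\mathcal{O}(b)$ has the form $\alpha(g)(a)\,\alpha(h)(b)$ for some $g,h\in G$, and I want to realize it as $\alpha(k)(ab)$ for a single $k\in G$, i.e. to find $k\in gA\cap hB$. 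Here $AB=G$ does the real work: taking inverses shows $BA=G$, and the left cosets $gA$ and $hB$ meet if and only if $h^{-1}g\in BA$, which now always holds. Choosing $k$ in the intersection gives $k^{-1}g\in A$ and $k^{-1}h\in B$, hence $\alpha(k)(a)=\alpha(g)(a)$ and $\alpha(k)(b)=\alpha(h)(b)$, so that $\alpha(g)(a)\,\alpha(h)(b)=\alpha(k)(a)\,\alpha(k)(b)=\alpha(k)(ab)\in\mathcal{O}(ab)$. Combining the two inclusions yields $\mathcal{O}(a)\mathcal{O}(b)=\mathcal{O}(ab)$.

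I expect the coset-intersection step to be the only genuinely delicate point: the content of the proposition is exactly that the factorization $G=\Stab(a)\Stab(b)$ lets one \emph{synchronize} the two independent automorphisms $\alpha(g)$ and $\alpha(h)$ into a single $\alpha(k)$ acting simultaneously on $a$ and $b$. It is worth noting that the equal-prime-factors assumption of the section is not actually used in this argument; the proof rests only on the Orbit–Stabilizer theorem and elementary arithmetic of subgroup indices in $G$, so it is the coprimality of $|\mathcal{O}(a)|$ and $|\mathcal{O}(b)|$ alone that drives both conclusions.
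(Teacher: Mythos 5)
Your proof is correct, and its overall architecture coincides with the paper's: first establish the factorization $\Stab(a)\Stab(b)=G$, then use it to synchronize the two group elements $g,h$ into a single $k\in g\Stab(a)\cap h\Stab(b)$ acting simultaneously on $a$ and $b$; your second half is essentially identical to the paper's, including the passage to $\Stab(b)\Stab(a)=G$ via inverses. The only genuine difference is how the factorization is proved. The paper argues prime by prime: for each prime power $p^n$ exactly dividing $|G|$, coprimality of $|\mathcal{O}(a)|$ and $|\mathcal{O}(b)|$ forces $p^n$ to divide $|\Stab(a)|$ or $|\Stab(b)|$, hence to divide $|\Stab(a)\Stab(b)|$ (a multiple of both stabilizer orders), so $|G|$ divides $|\Stab(a)\Stab(b)|$. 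You instead work with indices: both $[G:\Stab(a)]$ and $[G:\Stab(b)]$ divide $[G:\Stab(a)\cap\Stab(b)]$, coprimality and the bound $[G:\Stab(a)\cap\Stab(b)]\leq[G:\Stab(a)][G:\Stab(b)]$ force equality, and the product formula $|\Stab(a)\Stab(b)|\,|\Stab(a)\cap\Stab(b)|=|\Stab(a)|\,|\Stab(b)|$ then gives $|\Stab(a)\Stab(b)|=|G|$. Both are standard, equally elementary proofs of the classical fact that subgroups of coprime index satisfy $AB=G$, so this is a variation in bookkeeping rather than in substance. Your closing remark is also accurate and consistent with the paper: its proof likewise never invokes the assumption that $|G|$ and $|H|$ have the same prime factors; that hypothesis only becomes essential in the later results of the section (e.g.\ Lemma~\ref{lem: vertices at distance >2}), where orders of subgroups of $H$ must be compared with orbit sizes.
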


\begin{proof}
    We first prove that $\Stab(a)\Stab(b)=G$. 
    Let $p$ be a prime number 
    and $n$ be such that $p^n\mid |G|$ and $p^{n+1}\nmid |G|$.
    We claim that 
    $p^n$ divides $|\Stab(a)\Stab(b)|$. It is enough to prove that 
    $p^n$ divides either $|\Stab(a)|$ or $|\Stab(b)|$. If $p\nmid|\mathcal{O}(a)|$, 
    then, since $p^n$ divides $|G|=|\Stab(a)||\mathcal{O}(a)|$, it follows that $p^n\mid |\Stab(a)|$.
    If $p\mid|\mathcal{O}(a)|$, then, since $|\mathcal{O}(a)|$ and $|\mathcal{O}(b)|$ are coprime, $p\nmid|\mathcal{O}(b)|$ and the claim follows. 
    
    Now we prove that $\mathcal{O}(a)\mathcal{O}(b)=\mathcal{O}(ab)$. 
    Since $G$ acts by automorphisms, 
    $\mathcal{O}(a)\mathcal{O}(b)\supseteq \mathcal{O}(ab)$.
    For the reverse inclusion, let $(g\cdot a)(h\cdot b)\in \mathcal{O}(a)\mathcal{O}(b)$.
    Since $h^{-1}g\in G=\Stab(a)\Stab(b)$, there exist $x\in\Stab(a)$ and $y\in\Stab(b)$ 
    such that $gx=hy$. Thus 
    \[
    (g\cdot a)(h\cdot b)=(gx\cdot a)(hy\cdot b)=(gx\cdot a)(gx\cdot b)=gx\cdot(ab),
    \]
    which shows the result.
\end{proof}
Proposition~\ref{pro: coprime orbits} allows us to draw a few intermediate results
on the products of orbits that lie sufficiently far apart in the graph.
\begin{lem}
\label{lem: vertices at distance >2}
Suppose that $|G|$ and $|H|$ have the same prime factors.
Let $L_1,L_2$ be vertices of $\mathcal{C}(\alpha)$ 
such that $d(L_1,L_2)\geq 3$ and $|L_2|>|L_1|$. Then 
    \begin{enumerate}
        \item $L_1L_2,L_2L_1,L_1L_2^{-1}$ and $L_2L_1^{-1}$ are orbits;
        \item $|L_2L_1^{-1}|=|L_2|$;
        \item $L_2L_1^{-1}L_1=L_2$;
        \item $1<|\langle L_1^{-1}L_1\rangle|$ divides $|L_2|$.
    \end{enumerate}
\end{lem}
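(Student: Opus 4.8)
The plan is to fix representatives $a\in L_1$ and $b\in L_2$ and to exploit that $d(L_1,L_2)\geq 3$ forces $\gcd(|L_1|,|L_2|)=1$ (non-adjacency) together with the absence of any common neighbour. Since $\alpha$ acts by automorphisms, $g\cdot x^{-1}=(g\cdot x)^{-1}$, so $L_1^{-1}=\mathcal{O}(a^{-1})$ and $L_2^{-1}=\mathcal{O}(b^{-1})$ are again orbits, of sizes $|L_1|$ and $|L_2|$ respectively. As all four pairs $\{a,b\}$, $\{b,a\}$, $\{a,b^{-1}\}$, $\{b,a^{-1}\}$ have coprime orbit sizes, Proposition~\ref{pro: coprime orbits} gives immediately that $L_1L_2=\mathcal{O}(ab)$, $L_2L_1=\mathcal{O}(ba)$, $L_1L_2^{-1}=\mathcal{O}(ab^{-1})$ and $L_2L_1^{-1}=\mathcal{O}(ba^{-1})$ are orbits, which is item~(1).

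The heart of the argument is to show that $M:=L_2L_1^{-1}=\mathcal{O}(ba^{-1})$ is coprime to $L_1$, i.e.\ $\gcd(|M|,|L_1|)=1$; once this is in hand, items~(2)--(4) are routine. First, $L_2a^{-1}\subseteq M$ and right translation is a bijection, so $|M|\geq |L_2|>|L_1|$; in particular $M$ is a non-trivial vertex and $M\neq L_1$. Next I claim $\gcd(|M|,|L_2|)\neq 1$. If instead $\gcd(|M|,|L_2|)=1$, then also $\gcd(|M^{-1}|,|L_2|)=1$ with $M^{-1}=\mathcal{O}(ab^{-1})$, and Proposition~\ref{pro: coprime orbits} yields $M^{-1}L_2=\mathcal{O}(ab^{-1})\mathcal{O}(b)=\mathcal{O}(a)=L_1$; but then $M^{-1}b\subseteq L_1$ gives $|M|=|M^{-1}|\leq |L_1|$, contradicting $|M|>|L_1|$. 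Now suppose, for contradiction, that $\gcd(|M|,|L_1|)\neq 1$. Then $M$ is adjacent to $L_1$ (recall $M\neq L_1$); moreover $M\neq L_2$, for otherwise $\gcd(|M|,|L_1|)=\gcd(|L_2|,|L_1|)=1$. Hence $\gcd(|M|,|L_2|)\neq 1$ together with $M\neq L_2$ makes $M$ adjacent to $L_2$ as well, so $M$ is a common neighbour of $L_1$ and $L_2$ and $d(L_1,L_2)\leq 2$, a contradiction. Therefore $\gcd(|M|,|L_1|)=1$.

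With coprimality established, Proposition~\ref{pro: coprime orbits} gives $L_2L_1^{-1}L_1=ML_1=\mathcal{O}(ba^{-1})\mathcal{O}(a)=\mathcal{O}(b)=L_2$, which is item~(3). For item~(2) I sandwich: $|M|\geq |L_2|$ was shown above, while $Ma\subseteq ML_1=L_2$ gives $|M|=|Ma|\leq |L_2|$, whence $|M|=|L_2|$. Finally, for item~(4), associativity and item~(3) give $L_2(L_1^{-1}L_1)=(L_2L_1^{-1})L_1=L_2$, so for each $x\in L_1^{-1}L_1$ we have $L_2x\subseteq L_2$ and, by equality of cardinalities, $L_2x=L_2$, i.e.\ $x\in\ker L_2$. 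Thus $L_1^{-1}L_1\subseteq\ker L_2$, so $\langle L_1^{-1}L_1\rangle\leq\ker L_2$ and $|\langle L_1^{-1}L_1\rangle|$ divides $|\ker L_2|$, which divides $|L_2|$ by Remark~\ref{set ker:rem}; nontriviality of $\langle L_1^{-1}L_1\rangle$ follows from $|L_1|\geq 2$, since distinct $a,a'\in L_1$ produce $a^{-1}a'\neq e$.

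The main obstacle is precisely the coprimality claim $\gcd(|M|,|L_1|)=1$ of the middle paragraph: this is where the full strength of $d(L_1,L_2)\geq 3$ (no common neighbour, not merely non-adjacency) is used, and the delicate points are ruling out $\gcd(|M|,|L_2|)=1$ via the inverse orbit $M^{-1}$ and separately disposing of the degenerate possibility $M=L_2$. Everything after that is bookkeeping with Proposition~\ref{pro: coprime orbits} and Remark~\ref{set ker:rem}.
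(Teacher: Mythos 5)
Your proof is correct, but the engine driving the central steps (2)--(3) is genuinely different from the paper's. The paper proves (2) \emph{first}: it shows the divisibility $|L_2L_1^{-1}| \mid |L_1||L_2|$ via stabilizers (using $\Stab(a)\cap\Stab(b)\leq\Stab(ba^{-1})$ together with $\Stab(a)\Stab(b)=G$ from Proposition~\ref{pro: coprime orbits}), and then argues that $|L_2L_1^{-1}|>|L_2|$ would force $\gcd(|L_2L_1^{-1}|,|L_i|)>1$ for both $i$, creating a common neighbour; item (3) is then deduced from (2). You instead bypass stabilizers entirely: you first rule out $\gcd(|M|,|L_2|)=1$ by the inverse-orbit product argument ($M^{-1}L_2=\mathcal{O}(ab^{-1})\mathcal{O}(b)=L_1$ would force $|M|\leq|L_1|$, contradicting $|M|\geq|L_2|>|L_1|$), then use the no-common-neighbour property to conclude $\gcd(|M|,|L_1|)=1$, obtain (3) directly from Proposition~\ref{pro: coprime orbits}, and recover (2) by the sandwich $|L_2|\leq|M|=|Ma|\leq|L_2|$. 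Both proofs share the same skeleton --- Proposition~\ref{pro: coprime orbits}, the exclusion of common neighbours, and Remark~\ref{set ker:rem} for (4), where your arguments essentially coincide with the paper's --- but your route is arguably more elementary, working only with products of orbits and cardinalities, whereas the paper's stabilizer computation yields the extra quantitative fact $|L_2L_1^{-1}| \mid |L_1||L_2|$, which could be of independent use.
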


\begin{proof}
    The first claim follows from Proposition~\ref{pro: coprime orbits}. 

    We now prove (2). Clearly $|L_2L_1^{-1}|\geq |L_2|$.  
    By using Proposition~\ref{pro: coprime orbits} and 
    that $|\Stab(a)\cap\Stab(b)|$ divides $|\Stab(ab^{-1})|$ for any $a\in L_1$ and $b\in L_2$, 
    it follows that $|L_2L_1^{-1}|$ divides $|L_1||L_2|$. If $|L_2L_1^{-1}|>|L_2|$, since $|L_1|$ and $|L_2|$ are coprime, $\gcd(|L_2L_1^{-1}|,|L_2|)>1$ and $\gcd(|L_2L_1^{-1}|,|L_1|)>1$. Thus $d(L_1,L_2)\leq 2$, a contradiction. 
    
    We now prove (3). By (1) and (2), $|L_1|$ and $|L_2|=|L_2L_1^{-1}|$ are coprime. 
    By Proposition~\ref{pro: coprime orbits}, $L_2L_1^{-1}L_1$ is an orbit. Now the claim follows from $L_2\subseteq L_2L_1^{-1}L_1$.
        
    Finally, we prove (4). Let $K=\langle L_1^{-1}L_1\rangle$.
    Since $|L_1|>1$, then $|K|>1$. 
    Since $L_1^{-1}L_1$ is closed under inverses in $H$ and $L_2L_1^{-1}L_1=L_2$, 
    \[
    L_2L_1^{-1}L_1L_1^{-1}L_1=L_2L_1^{-1}L_1=L_2
    \]
    and hence $L_2k=L_2$ for all $k\in K$. 
    Thus $K\leq \ker L_2$ and hence $|K|$ divides~$|L_2|$.
\end{proof}
The following proposition shows that if there exist three non-trivial orbits of different sizes,
then the orbit of intermediate size should lie close to one of either other orbits.
\begin{pro}
\label{pro: 3 vertices distance}
    Suppose that $|G|$ and $|H|$ have the same prime factors.
    Then it is not possible to find three vertices $L_1,L_2,L_3$ of $\mathcal{C}(\alpha)$ 
    such that $|L_3|>|L_2|>|L_1|$ and $d(L_1,L_2)\geq 3$ and $d(L_2,L_3)\geq 3$.
\end{pro}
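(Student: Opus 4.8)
The plan is to argue by contradiction, assuming that three such vertices $L_1,L_2,L_3$ exist. First I would extract the coprimality forced by the distance hypotheses: since $d(L_1,L_2)\geq 3$ and $d(L_2,L_3)\geq 3$, both pairs are in particular non-adjacent, so by the definition of $\mathcal{C}(\alpha)$ we have $\gcd(|L_1|,|L_2|)=1$ and $\gcd(|L_2|,|L_3|)=1$. Combined with $|L_1|<|L_2|<|L_3|$, both consecutive pairs satisfy the hypotheses of Lemma~\ref{lem: vertices at distance >2}, which I would then apply to each of them.

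Applying part (4) of Lemma~\ref{lem: vertices at distance >2} to $(L_1,L_2)$ produces the subgroup $K_1=\langle L_1^{-1}L_1\rangle$ with $1<|K_1|$ dividing $|L_2|$, and applying it to $(L_2,L_3)$ produces $K_2=\langle L_2^{-1}L_2\rangle$ with $1<|K_2|$ dividing $|L_3|$. I also record that $K_1\leq\ker L_2$: by part (3) we have $L_2L_1^{-1}L_1=L_2$, and since $L_1^{-1}L_1$ is closed under inverses this iterates to $L_2k=L_2$ for every $k\in K_1$. The heart of the proof is the claim that $K_1\leq K_2$. To establish it I would fix a base point $b\in L_2$; then for each $k\in K_1\leq\ker L_2$ we have $bk\in L_2k=L_2$, whence $k=b^{-1}(bk)\in b^{-1}L_2\subseteq L_2^{-1}L_2\subseteq\langle L_2^{-1}L_2\rangle=K_2$. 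As $K_1$ is a subgroup, $K_1\subseteq K_2$ upgrades to $K_1\leq K_2$.

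With this containment the contradiction is immediate. Choosing a prime $p$ dividing $|K_1|>1$, we get $p\mid|L_2|$ because $|K_1|$ divides $|L_2|$, while $K_1\leq K_2$ forces $|K_1|\mid|K_2|$ and $|K_2|$ divides $|L_3|$, so $p\mid|L_3|$ as well; thus $p\mid\gcd(|L_2|,|L_3|)=1$, which is absurd. I expect the only genuinely delicate point to be the containment $K_1\leq K_2$: everything else is bookkeeping with Lemma~\ref{lem: vertices at distance >2} together with the coprimality read off from the graph distances. The conceptual content of that step is that the kernel relation $K_1\leq\ker L_2$ squeezes the difference set of the smaller orbit $L_1$ inside the difference set generating $K_2$, and this is precisely the mechanism that transports a prime divisor of $|L_2|$ onto $|L_3|$, contradicting their coprimality.
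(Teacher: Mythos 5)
Your proof is correct, but the middle step differs from the paper's. Both arguments start identically: assume the three vertices exist, apply Lemma~\ref{lem: vertices at distance >2} and aim to show that $|K_1|=|\langle L_1^{-1}L_1\rangle|>1$ is a common divisor of $|L_2|$ and $|L_3|$, which contradicts $d(L_2,L_3)\geq 3$. The paper gets the divisibility $|K_1|\mid |L_3|$ by composing the set identities from part (3): $L_3L_1^{-1}L_1=(L_3L_2^{-1}L_2)L_1^{-1}L_1=L_3L_2^{-1}(L_2L_1^{-1}L_1)=L_3L_2^{-1}L_2=L_3$, which places $K_1$ inside $\ker L_3$ and invokes Remark~\ref{set ker:rem}. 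You instead prove the subgroup containment $K_1\leq K_2=\langle L_2^{-1}L_2\rangle$ via the observation that $\ker L_2\subseteq L_2^{-1}L_2$ (any $k$ with $L_2k=L_2$ is $b^{-1}(bk)$ for a base point $b\in L_2$), and then chain $|K_1|\mid |K_2|\mid |L_3|$ using part (4) of the lemma applied to the pair $(L_2,L_3)$. Both routes are sound; the paper's is shorter and needs part (4) only once, while yours isolates a reusable structural fact, namely that the difference-set subgroups of orbits at pairwise distance $\geq 3$ are nested, $K_1\leq K_2$, which is a slightly stronger statement than the divisibility the paper extracts. One small economy you could make: since you prove $K_1\subseteq L_2^{-1}L_2$ directly, the containment $K_1\leq K_2$ and Lagrange are not strictly needed in the form you state them; but as written everything checks out.
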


\begin{proof}
    Suppose on the contrary that $L_1, L_2$ and $L_3$ are vertices of $\mathcal{C}(\alpha)$ such that $|L_3|>|L_2|>|L_1|$ and $d(L_1,L_2)\geq 3$ and $d(L_2,L_3)\geq 3$. 
    
    By Lemma~\ref{lem: vertices at distance >2},
    $|\langle L_1^{-1}L_1\rangle|>1$ and it divides $|L_2|$. Moreover, $$L_2L_1^{-1}L_1=L_2 \textnormal{ and } L_3L_2^{-1}L_2=L_3.$$
    Therefore
    \[
     L_3L_1^{-1}L_1=(L_3L_2^{-1}L_2)L_1^{-1}L_1=L_3L_2^{-1}(L_2L_1^{-1}L_1)=L_3L_2^{-1}L_2=L_3.    
    \]
    
    Thus $\langle L_1^{-1}L_1\rangle$ is a subgroup of $\ker L_3$ and $|\langle L_1^{-1}L_1\rangle|$ divides $|L_3|$. 
    Hence, $|\langle L_1^{-1}L_1\rangle|$ is a common divisor of $|L_3|$ and $|L_2|$. Thus, $d(L_2,L_3)=1$, which is a contradiction. 
\end{proof}
Finally, the following theorems combine the previous results to limit the number of connected components to two and the diameter of each component to four. 
\begin{thm}
\label{thm:connected_components}
Suppose that $|G|$ and $|H|$ have the same prime factors.
Then the number of connected components of
$\mathcal{C}(\alpha)$ is at most $2$.
\end{thm}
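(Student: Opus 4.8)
The plan is to argue by contradiction, reducing the existence of three connected components to a triple of orbits that is explicitly forbidden by Proposition~\ref{pro: 3 vertices distance}. Suppose $\mathcal{C}(\alpha)$ has at least three connected components, and choose one vertex $L_1$, $L_2$, $L_3$ from three pairwise distinct components.

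The first thing I would record is that vertices lying in distinct components automatically have pairwise coprime sizes. Indeed, if $L_i$ and $L_j$ belong to different components, they cannot be adjacent, so by Definition~\ref{defn:common divisor graph} we have $\gcd(|L_i|,|L_j|)=1$. Since every vertex is a non-trivial orbit, each $|L_i|>1$, and two integers greater than $1$ that are coprime are necessarily distinct. Hence the three sizes are pairwise distinct, and after relabelling I may assume $|L_3|>|L_2|>|L_1|>1$. Moreover, being in pairwise distinct components means no two of the vertices are joined by any path, so $d(L_i,L_j)=\infty$ for all $i\neq j$; in particular $d(L_1,L_2)\geq 3$ and $d(L_2,L_3)\geq 3$.

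This is exactly the configuration ruled out by Proposition~\ref{pro: 3 vertices distance}, which is applicable because $|G|$ and $|H|$ have the same prime factors. The resulting contradiction shows that $\mathcal{C}(\alpha)$ cannot have three or more connected components, and therefore has at most two.

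There is essentially no hard step here: the entire weight of the argument is carried by Proposition~\ref{pro: 3 vertices distance}, and the only point requiring care is that three orbits drawn from distinct components have strictly increasing sizes, which is precisely the coprimality observation above. The main obstacle was already overcome in the preparatory results, namely establishing that orbits lying far apart in the graph force divisibility relations (Lemma~\ref{lem: vertices at distance >2}) and hence cannot be arranged in a strictly increasing chain of mutually distant sizes (Proposition~\ref{pro: 3 vertices distance}); once those are in hand, the bound on the number of components is immediate.
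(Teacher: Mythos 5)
Your proof is correct, but it routes through a different preparatory result than the paper does. You invoke Proposition~\ref{pro: 3 vertices distance}: since the three vertices lie in pairwise distinct components, all pairwise distances are infinite, the sizes are pairwise coprime and hence pairwise distinct, and the ordered triple $|L_3|>|L_2|>|L_1|$ with $d(L_1,L_2)\geq 3$ and $d(L_2,L_3)\geq 3$ is exactly the forbidden configuration. The paper instead applies Lemma~\ref{lem: vertices at distance >2}(4) directly, twice: to the pair $(L_1,L_2)$ and to the pair $(L_1,L_3)$ (both distances being infinite), obtaining that $|\langle L_1^{-1}L_1\rangle|>1$ divides both $|L_2|$ and $|L_3|$, contradicting $\gcd(|L_2|,|L_3|)=1$. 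The two arguments rest on the same underlying machinery, but there is a real difference in economy: Proposition~\ref{pro: 3 vertices distance} is the harder statement --- its proof needs the chaining identity $L_3L_2^{-1}L_2L_1^{-1}L_1=L_3$ precisely because there $d(L_1,L_3)$ is not assumed large --- whereas in the disconnected-components setting all three pairwise distances are infinite, so the paper can bypass the chaining entirely and use only the lemma. Your version is a clean formal corollary of the stronger proposition; the paper's shows the weaker lemma already suffices. Both are complete, and your careful justification that the sizes are strictly ordered (coprime integers exceeding $1$ are distinct) is exactly the point the paper's ``without loss of generality'' silently relies on.
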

\begin{proof}
Suppose that there exist three connected components. Let $L_1,L_2,L_3$ be three vertices belonging to different connected components with, without loss of generality, $|L_3|>|L_2|>|L_1|>1$. 
Then 
\[
\gcd(|L_1|,|L_2|)=\gcd(|L_1|,|L_3|)=\gcd(|L_2|,|L_3|)=1.
\]
By Lemma~\ref{lem: vertices at distance >2}, $|\langle L_1^{-1}L_1\rangle_+|>1$ and
divides 
$|L_3|$ and $|L_2|$, a contradiction. 
\end{proof}

\begin{thm}
\label{thm:diameter}
Suppose that $|G|$ and $|H|$ have the same prime factors.
Then the diameter of every connected component of $\mathcal{C}(\alpha)$ is at most four.
\end{thm}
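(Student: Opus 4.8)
The plan is to argue by contradiction: I would suppose that some connected component of $\mathcal{C}(\alpha)$ contains two vertices at distance at least $5$ and derive a contradiction, which forces the diameter down to $4$. Fixing such a pair and a shortest path between them, I would use that any sub-path of a geodesic is again a geodesic and has no chords, so its first six vertices $p_0,p_1,\dots,p_5$ satisfy $d(p_i,p_j)=|i-j|$, and two of them are adjacent precisely when their indices differ by $1$. In particular $\gcd(|p_i|,|p_j|)>1$ if and only if $|i-j|=1$; for $|i-j|\ge 2$ the sizes $|p_i|$ and $|p_j|$ are coprime, hence distinct.

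The first key fact I would isolate is a one-sided consequence of Lemma~\ref{lem: vertices at distance >2}(4): if an orbit $L$ has smaller size than two orbits $M_1,M_2$ with $d(L,M_1),d(L,M_2)\ge 3$, then $\langle L^{-1}L\rangle$ has order $>1$ and divides both $|M_1|$ and $|M_2|$, so $\gcd(|M_1|,|M_2|)>1$ and $M_1,M_2$ are adjacent. I would combine this with Proposition~\ref{pro: 3 vertices distance} applied with $p_0$ as the middle vertex: all of $p_3,p_4,p_5$ lie at distance $\ge 3$ from $p_0$, so the proposition forbids one of them from being smaller and another larger than $p_0$; hence $|p_3|,|p_4|,|p_5|$ are all $<|p_0|$ or all $>|p_0|$. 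The ``all larger'' case is impossible, since then $p_0$ is smaller than both $p_3$ and $p_5$ at distance $\ge 3$, forcing $p_3\sim p_5$ against $d(p_3,p_5)=2$. Thus $|p_3|,|p_4|,|p_5|<|p_0|$. Running the symmetric argument with $p_5$ as the middle vertex (far set $\{p_0,p_1,p_2\}$, all at distance $\ge 3$) yields that $|p_0|,|p_1|,|p_2|$ are all $<|p_5|$ or all $>|p_5|$; the first alternative contradicts $|p_5|<|p_0|$ just obtained, and the second makes $p_5$ smaller than $p_0$ and $p_2$ at distance $\ge 3$ from both, forcing $p_0\sim p_2$ against $d(p_0,p_2)=2$. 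Every case ends in a contradiction, so no two vertices lie at distance $5$ and the diameter is at most $4$.

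The step I expect to be the main obstacle is the passage from ``positions on the path'' to ``order by size.'' The naive attempt is to feed three path vertices directly into Proposition~\ref{pro: 3 vertices distance}, but on a geodesic of length $5$ no three vertices have both consecutive gaps $\ge 3$ when measured by position, so the proposition never triggers. The resolution is to let size, not position, determine the roles $L_1<L_2<L_3$, and to recognise that Proposition~\ref{pro: 3 vertices distance} by itself does not exclude the ``monotone'' size patterns along the path; those residual configurations have to be eliminated using the divisibility in Lemma~\ref{lem: vertices at distance >2}(4) through the coprime witnesses $\{p_3,p_5\}$ and $\{p_0,p_2\}$, which sit at distance $2$. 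Keeping careful track of which orbit is the smaller element in each invocation of the lemma is where the argument must be handled with care.
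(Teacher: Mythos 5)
Your proof is correct and takes essentially the same route as the paper: argue by contradiction along a length-5 geodesic, combining Lemma~\ref{lem: vertices at distance >2}(4) (a small orbit at distance $\geq 3$ from two larger ones forces those two to be adjacent) with Proposition~\ref{pro: 3 vertices distance}. The paper's version is merely more economical --- it fixes a single intermediate vertex $L_3$ with $d(L_3,L_1)=3$ and $d(L_3,L_2)=2$, assumes $|L_1|<|L_2|$, and splits into the cases $|L_1|<|L_3|$ (killed by the lemma) and $|L_3|<|L_1|$ (killed by the proposition) --- but the ingredients and the contradictions reached coincide with yours.
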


\begin{proof}
    Let $L_1$ and $L_2$ be in the same connected component and such that $d(L_1,L_2)\geq 5$.
    Then there exists a vertex $L_3$ such that $d(L_3,L_1)=3$ and $d(L_3,L_2)=2$. 
    Without loss of generality, we may assume that $|L_1|<|L_2|$. 
    If $|L_1|<|L_3|$, then by Lemma~\ref{lem: vertices at distance >2}, $|\langle L_1^{-1}L_1\rangle|>1$ and it 
    divides both $|L_2|$ and $|L_3|$. 
    Thus $\gcd(|L_2|,|L_3|)>1$, a contradiction
    to $d(L_3,L_2)=2$. This implies that 
    $|L_3|<|L_1|<|L_2|$, a contradiction to Proposition~\ref{pro: 3 vertices distance}.
\end{proof}

\section{Common divisor graphs for skew braces and their subsolutions}\label{sec:comdivskew}
In this section, we define two particular instances of common divisor graphs on skew braces and handle some examples. Secondly, we note that this allows to define a common divisor graph of an injective solution of the Yang-Baxter equation. In particular, the vertices of this graph, joint with the trivial orbits, can be characterized with a property of the set-theoretic solution.

\begin{defn}
\label{defn:graph}
    Let $A$ be a finite skew brace. 
    We define the $\lambda$-graph of $A$ as the graph $\Lambda(A)=\mathcal{C}(\lambda)$
    and the $\theta$-graph of $A$
    as the graph $\Theta(A)=\mathcal{C}(\theta)$.
\end{defn}

Moreover, the $\lambda$-graph and the $\theta$-graph of a skew brace generalize the graph introduced by Bertram, Herzog and Mann \cite{MR1099007}.

Observe that $\lambda$ is an action of $G=(A,\circ)$ on $H=(A,+)$ by automorphisms
and $\theta$ is an action of
$G=(A,+)\rtimes_\lambda (A,\circ)$ on $H=(A,+)$ by automorphisms.
So in both cases $|G|$ and $|H|$ have the same prime
factors and we can apply all the results of the 
previous section. In particular, the following 
proposition follows from Theorems~\ref{thm:connected_components} and~\ref{thm:diameter}
and is mentioned to emphasize the results.
    
\begin{pro}
    Let $A$ be a finite skew brace. Then, the $\lambda$- and $\theta$-graphs of $A$ have at most two connected components, each of diameter at most four.
\end{pro}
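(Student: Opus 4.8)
The plan is to observe that this final proposition is an immediate corollary of the two theorems already proved in Section~\ref{comdivgraph:sec}, so the entire task reduces to verifying that the two graphs $\Lambda(A)$ and $\Theta(A)$ genuinely fall under the hypotheses of the Convention governing that section, namely that each is of the form $\mathcal{C}(\alpha)$ for an action $\alpha\colon G\to\Aut(H)$ of finite groups in which $|G|$ and $|H|$ share the same prime factors. Once that hypothesis is checked, Theorem~\ref{thm:connected_components} hands us ``at most two connected components'' and Theorem~\ref{thm:diameter} hands us ``diameter at most four'' for each component, with nothing further to compute.

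First I would recall the two defining identifications from Definition~\ref{defn:graph}: $\Lambda(A)=\mathcal{C}(\lambda)$ arises from the action $\lambda\colon(A,\circ)\to\Aut(A,+)$, so here $G=(A,\circ)$ and $H=(A,+)$; and $\Theta(A)=\mathcal{C}(\theta)$ arises from the action $\theta\colon(A,+)\rtimes_\lambda(A,\circ)\to\Aut(A,+)$, so here $G=(A,+)\rtimes_\lambda(A,\circ)$ and $H=(A,+)$. In both cases $H=(A,+)$ is finite since $A$ is a finite skew brace, and $G$ is finite as well (it is either $(A,\circ)$, which has the same underlying finite set as $A$, or the semidirect product, of order $|A|^2$).

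Next I would verify the key arithmetic hypothesis, that $|G|$ and $|H|$ have the same prime divisors. For the $\lambda$-graph this is automatic because $|(A,\circ)|=|A|=|(A,+)|$, the two group structures sharing the same underlying set; hence $|G|=|H|$ and a fortiori they have identical prime factorizations. For the $\theta$-graph, $|G|=|A|^2$ while $|H|=|A|$, so although the orders differ, $|A|^2$ and $|A|$ are divisible by exactly the same set of primes, which is all that the Convention and the subsequent proofs require. With this checked, both instances satisfy every hypothesis invoked in Section~\ref{comdivgraph:sec}.

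Finally I would simply apply the two theorems: Theorem~\ref{thm:connected_components} gives that $\mathcal{C}(\lambda)$ and $\mathcal{C}(\theta)$ each have at most two connected components, and Theorem~\ref{thm:diameter} gives that every connected component of each has diameter at most four, which is precisely the statement for $\Lambda(A)$ and $\Theta(A)$. I do not anticipate a genuine obstacle here, since the proposition is explicitly flagged in the text as being ``mentioned to emphasize the results'' and ``follows from Theorems~\ref{thm:connected_components} and~\ref{thm:diameter}''; the only point requiring the least bit of care is the $\theta$-case, where one must note that ``same prime factors'' (rather than ``equal orders'') is the correct and sufficient condition, so that the disparity between $|A|^2$ and $|A|$ causes no trouble.
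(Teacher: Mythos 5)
Your proposal is correct and follows exactly the paper's own argument: the paper likewise observes that $\lambda$ and $\theta$ are actions of finite groups whose orders ($|A|$ and $|A|$, respectively $|A|^2$ and $|A|$) share the same prime factors, and then invokes Theorem~\ref{thm:connected_components} and Theorem~\ref{thm:diameter}. Your explicit check that ``same prime factors'' rather than ``equal orders'' is what matters in the $\theta$-case is precisely the point the paper relies on, stated with slightly more care.
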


If $A$ is a left nilpotent skew brace of nilpotent type, the previous result can be strengthened.

\begin{thm}
    Let $A$ be a finite skew brace.
    If $A$ is left nilpotent of nilpotent type, then 
    $\Theta(A)$ and $\Lambda(A)$ are connected of diameter at most two.
\end{thm}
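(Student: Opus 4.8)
The plan is to reduce the whole statement to the prime-power case, where orbit sizes become multiplicative, and then to finish with a one-line graph argument. The key structural input is that a finite left nilpotent skew brace of nilpotent type is the direct product of its Sylow sub-skew braces: writing $\pi$ for the set of primes dividing $|A|$, one has $A=\prod_{p\in\pi}A_p$ with each $A_p$ a left nilpotent skew brace of $p$-power order (see \cite{MR3957824}). Heuristically, since $(A,+)$ is nilpotent its Sylow subgroups $P_p$ are characteristic, hence $\lambda$-invariant and closed under $\circ$, so each $(P_p,+,\circ)$ is a sub-skew brace; it is left nilpotency, rather than nilpotent type alone, that forces these to assemble into a genuine direct product (in particular that each $P_p$ is an ideal and $(A,\circ)=\prod_p(P_p,\circ)$).

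First I would record that both actions respect this decomposition. Since $\lambda_b$ preserves every $P_p$ and the semidirect product splits as $(A,+)\rtimes_\lambda(A,\circ)=\prod_p\big((P_p,+)\rtimes_\lambda(P_p,\circ)\big)$, for every $x=(x_p)_{p}\in A$ the actions are diagonal across independent factors, whence
\[
\Lambda_A(x)=\prod_{p\in\pi}\Lambda_{A_p}(x_p)\qquad\text{and}\qquad\Theta_A(x)=\prod_{p\in\pi}\Theta_{A_p}(x_p),
\]
so that $|\Lambda_A(x)|=\prod_p|\Lambda_{A_p}(x_p)|$ and likewise for $\theta$. As each factor has $p$-power size, the set $S(x)$ of primes dividing $|\Lambda_A(x)|$ equals $\{p:|\Lambda_{A_p}(x_p)|>1\}$; the orbit is non-trivial exactly when $S(x)\neq\emptyset$, and two vertices $\Lambda_A(x),\Lambda_A(y)$ are adjacent in $\Lambda(A)$ exactly when $S(x)\cap S(y)\neq\emptyset$. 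The analogous statements hold for $\Theta(A)$ with the corresponding prime-sets.

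The graph-theoretic core is then immediate. Given two non-trivial orbits $\Lambda_A(x),\Lambda_A(y)$: if $S(x)\cap S(y)\neq\emptyset$ they are adjacent. Otherwise pick $p\in S(x)$ and $q\in S(y)$ (necessarily $p\neq q$) and, using the identification $A=\prod_pA_p$, form $z\in A$ whose $p$-component is $x_p$, whose $q$-component is $y_q$, and whose remaining components are $0$. Since $\Lambda_{A_r}(0)=\{0\}$ for $r\neq p,q$, we get $S(z)=\{p,q\}$, so $\Lambda_A(z)$ is a non-trivial orbit adjacent to $\Lambda_A(x)$ through $p$ and to $\Lambda_A(y)$ through $q$; hence $d(\Lambda_A(x),\Lambda_A(y))\leq 2$. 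This proves $\Lambda(A)$ is connected of diameter at most two, and the identical argument—replacing $\Lambda_A$ by $\Theta_A$ and $S$ by its $\theta$-analogue—settles $\Theta(A)$.

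The hard part is the structural input, not the estimate: once orbit sizes multiply over a Sylow decomposition, the remainder is pure bookkeeping of prime divisors. The one genuine subtlety to guard against is that $(A,\circ)$ need not be nilpotent merely because $(A,+)$ is, so the compatible direct product $A=\prod_pA_p$ really relies on left nilpotency; a self-contained treatment would spend its effort establishing that decomposition rather than on the graph bound. I would also note the trivial edge cases separately: if no $A_p$ has a non-trivial orbit the graph is empty (vacuously connected), and if exactly one prime occurs the graph is complete, both consistent with diameter at most two.
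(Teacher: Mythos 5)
Your proof is correct and takes essentially the same route as the paper's: both invoke the decomposition of a finite left nilpotent skew brace of nilpotent type into a direct product of $p$-braces over the Sylow subgroups of $(A,+)$, note that $\lambda$- and $\theta$-orbits factor as products over these components, and then construct the identical intermediate element (components $x_p$ and $y_q$ at two distinct primes, zero elsewhere) whose orbit is a common neighbor of the two given vertices. The only differences are expository, namely your prime-set bookkeeping $S(x)$ and the explicit remark on why left nilpotency (not merely nilpotent type) is needed for the product decomposition.
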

\begin{proof}
    Since $A$ is left nilpotent of nilpotent type,
    by~\cite[Theorem 4.8, Corollary 4.3]{MR3957824}
    $A$ is the direct product
    $A=A_1\times\dots\times A_n$, 
    where $A_i$ is a $p_i$-brace with additive group 
    the Sylow $p_i$ subgroup of $(A,+)$.

    Let $a,b$ elements of $A\setminus \Fix(A)$ such that $\Lambda(a)$ and $\Lambda(b)$ are not connected in $\Lambda(A)$. 
    We will prove the result by showing that there exists
    $c\in A$ such that $\Lambda(c)$
    is adjacent to $\Lambda(a)$ and to $\Lambda(b)$
    in $\Lambda(A)$.
    First of all we can write $a=(a_1,\dots, a_n)$
    and $b=(b_1,\dots, b_n)$ for some
    $a_i,b_i\in A_i$ for $i\in\{1,\dots,n\}$.
    Then the $\lambda$-orbits are $\Lambda(a)=\Lambda_{A_1}(a_1)\times\dots\times\Lambda_{A_n}(a_n)$ 
    and $\Lambda(b)=\Lambda_{A_1}(b_1)\times\dots\times\Lambda_{A_n}(b_n)$.
    Since $a,b\not\in \Fix(A)$ and $\Lambda(a)$ and
    $\Lambda(b)$ are not connected in $\Lambda(A)$,
    there exist $i,j\in\{1,\dots,n\}$ such that $i\neq j$ and
    $\Lambda_{A_i}(a_i)$ and $\Lambda_{A_j}(b_j)$ are non trivial, i.e. $p_i$ divides $|\Lambda_{A_i}(a_i)|$
    and $p_j$ divides $|\Lambda_{A_j}(b_j)|$.
    Now we can consider the element $c=(c_1,\dots,c_n)\in A_1\times\dots\times A_n$,
    with $c_i=a_i$, $c_j=b_j$ and $c_k=0$ for all $k\not\in\{i,j\}$.
    It is clear that $|\Lambda(c)|=|\Lambda_{A_i}(a_i)||\Lambda_{A_j}(b_j)|$ and hence that $\Lambda(c)$
    is adjacent to $\Lambda(a)$ and $\Lambda(b)$
    in $\Lambda(A)$.

    In the same way, one can show the result for $\Theta(A)$.
\end{proof}

The converse of the previous theorem 
is not true in general, as we can find finite non left nilpotent skew braces
with connected $\lambda$- and $\theta$-
 graphs of diameter less than or equal to two.

For every $n\in \Z_{\geq 0}$, we denote
by $\Z/n\Z$ the ring of integers modulo $n$.
\begin{exa}
    Let $A$ be the skew brace 
    with additive group $\Z/3\Z\times\Z/2\Z$ and
    multiplication $(n,m)\circ(s,t)=(n+(-1)^ms,m+t)$.
    Then $A$ is not left nilpotent but 
    $\Theta(A)=\Lambda(A)$ is the complete graph on two vertices.
\end{exa}
    
It is also important to note that
if we drop the hypothesis of $A$ being of nilpotent type, the result of the previous theorem doesn't hold.
\begin{exa}
    The trivial skew brace $\Triv(S_3)$ is clearly left nilpotent but has a disconnected $\theta$-graph.
\end{exa}

\begin{pro} \label{pro: subgraph}
    Let $A$ be a skew brace and $a\in A$. 
    Denote $\Con(a)$ the conjugacy class of $a$ in $(A,+)$.
    Then, $\Con(a),\Lambda(a)\subseteq\Theta(a)$.
    
    Moreover, $\Theta(a)=\bigcup_{x\in A}(\Con(\lambda_x(a))$ 
    is a union of conjugacy classes of $(A,+)$ of the same size.
    In particular, the graph $\Theta(A)$ contains a homomorphic image of $\Gamma(A,+)$.
\end{pro}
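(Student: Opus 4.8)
The plan is to reduce every assertion to the single identity $\theta_{(a,b)}(c)=a+\lambda_b(c)-a$, which displays each $\theta$-translate of $c$ as an additive conjugate of the $\lambda$-translate $\lambda_b(c)$. Since the additive and multiplicative identities of a skew brace coincide, we have $\lambda_0=\id$, and specializing the acting pair then yields both inclusions. Taking the pair $(x,0)$ gives $\theta_{(x,0)}(a)=x+\lambda_0(a)-x=x+a-x$, and letting $x$ range over $A$ produces every additive conjugate of $a$, so $\Con(a)\subseteq\Theta(a)$. Taking the pair $(0,b)$ gives $\theta_{(0,b)}(a)=\lambda_b(a)$, and letting $b$ range over $A$ gives $\Lambda(a)\subseteq\Theta(a)$.

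For the union formula I would fix the second coordinate $b$ and let the first coordinate vary: the set $\{a+\lambda_b(c)-a : a\in A\}$ is precisely the additive conjugacy class $\Con(\lambda_b(c))$, so taking the union over all $b\in A$ gives $\Theta(c)=\bigcup_{x\in A}\Con(\lambda_x(c))$, the claimed description. To see that all classes appearing share the same size, I would use that each $\lambda_x$ is an automorphism of $(A,+)$: from $\lambda_x(y+a-y)=\lambda_x(y)+\lambda_x(a)-\lambda_x(y)$, together with the fact that $\lambda_x$ is bijective (so $\lambda_x(y)$ runs over all of $A$ as $y$ does), one obtains $\lambda_x(\Con(a))=\Con(\lambda_x(a))$, whence $|\Con(\lambda_x(a))|=|\Con(a)|$ for every $x$. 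Consequently $\Theta(a)$ is a disjoint union of conjugacy classes, each of the common size $|\Con(a)|$, and in particular $|\Con(a)|$ divides $|\Theta(a)|$.

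For the final assertion I would define a map on vertices sending a non-trivial conjugacy class $\Con(a)$ to the orbit $\Theta(a)$. This is well defined, since $\Con(a)=\Con(b)$ forces $b\in\Theta(a)$ and hence $\Theta(b)=\Theta(a)$, and it lands among the vertices of $\Theta(A)$, because $\Con(a)\subseteq\Theta(a)$ forces $|\Theta(a)|\geq|\Con(a)|>1$. For adjacency, if $\gcd(|\Con(a)|,|\Con(b)|)>1$ then, because $|\Con(a)|$ divides $|\Theta(a)|$ and $|\Con(b)|$ divides $|\Theta(b)|$, also $\gcd(|\Theta(a)|,|\Theta(b)|)>1$, so the images are adjacent whenever they are distinct.

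The step requiring the most care is this last one, because distinct adjacent conjugacy classes may map to the same $\theta$-orbit: indeed $\Con(a)$ and $\Con(\lambda_x(a))$ both lie in $\Theta(a)$ and, having equal size exceeding $1$, are adjacent in $\Gamma(A,+)$. Thus the assignment becomes a genuine graph homomorphism only after identifying conjugacy classes contained in a common $\theta$-orbit. I would observe that any two such identified classes are mutually adjacent in $\Gamma(A,+)$, so these identifications collapse cliques and the resulting quotient is a legitimate homomorphic image of $\Gamma(A,+)$; the divisibility $|\Con(a)|\mid|\Theta(a)|$ established above then shows this quotient embeds, preserving adjacency, as a subgraph of $\Theta(A)$. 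Hence $\Theta(A)$ contains a homomorphic image of $\Gamma(A,+)$, as required.
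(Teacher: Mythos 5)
Your proposal is correct and follows essentially the same route as the paper's proof: the same specializations $\theta_{(x,0)}$ and $\theta_{(0,x)}$ for the inclusions, the same identity $\lambda_x(\Con(a))=\Con(\lambda_x(a))$ for the equal-size union, and the same vertex map $\Con(a)\mapsto\Theta(a)$ with adjacency preserved via the divisibility $|\Con(a)|\mid|\Theta(a)|$. Your only deviations are cosmetic: you compute the orbit directly as a union over the second coordinate rather than via closure under the $\theta$-action, and you spell out more explicitly than the paper that the fibers of the map are cliques, so identifying them yields a genuine homomorphic image.
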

\begin{proof}
    For all $x\in A$, we have that 
    $\lambda_x(a)=\theta_{(0,x)}(a)$ and $x+a-x=\theta_{(x,0)}(a)$. In particular, $\Con(a)$ and $\Lambda(a)$ are contained in $\Theta(a)$. As, for any $x,y,a \in A$,  $$\lambda_x(y+a-z)= \lambda_x(y)+\lambda_x(a)-\lambda_x(y),$$ it follows that $\lambda_x(\Con(a)) \subseteq \Con(\lambda_x(a))$. Note that the reverse inclusion follows from applying $\lambda_{x'}$, where $x'$ denotes the inverse of $x$ in $(A,\circ)$. Thus, the conjugacy classes of $a$ and $\lambda_x(a)$ have the same size. 
    Moreover, it holds that the union $ \bigcup_{x \in A} \Con(\lambda_x(a))$ is closed under the $\theta$-action. As it is clearly contained in $\Theta(a)$, equality must ensue. Finally, the map $\varphi: \Gamma(A,+) \rightarrow \Theta(A)$ given by $\varphi(\Con(a)) = \Theta(a)$ is clearly well-defined. Since $\Theta(a)$ is the disjoint union of conjugacy classes of the same size, it follows that $|\Theta(a)|$ is a multiple of $|\Con(a)|$. In particular, if two vertices in $\Gamma(A,+)$ are connected and not identified by $\varphi$, then their images under $\varphi$ are connected.  
\end{proof}

Note that the subgraph obtained in Proposition~\ref{pro: subgraph} is not necessarily an induced subgraph. 
\begin{exa}
  Let $(A,\circ)=\langle s\colon s^{12}=0\rangle$ be the cyclic group of order 12
  and $(A,+,\circ)$ the skew brace with addition $s^m+s^n=s^{m+(-1)^mn}$ for all $m,n\in\Z$.
  The $\lambda$-action is given by $\lambda_{s^m}(s^n)=s^{n+(1+(-1)^{n+1}m)}$,
  thus 
  \[
  Z(A,+)=\{0,s^6\}\subset \Fix(A)=\{s^{2i}\colon i\in\Z\}\text{ and }\Fix_{\theta}(A)=Z(A,+)=\{0,s^6\}.
  \]
  $A$ as only one non-trivial $\lambda$-orbit of size six: $\Lambda(s)=\{s^{2i+1}\colon i\in\Z\}$.
  As for the conjugation action, $\con_{s^n}(s^m)=s^{(-1)^n(-n+m+(-1)^mn)}$, so
  \[
  \Con(s)=\{s,s^5,s^9\},\quad \Con(s^3)=\{s^3,s^7,s^{11}\}
  \]
  \[
  \Con(s^2)=\{s^2,s^{10}\},\quad \Con(s^4)=\{s^4,s^8\}
  \]
  By Proposition~\ref{pro: subgraph} we have that 
  $\Theta(s^n)=\bigcup_{m\in \Z}(\Con(\lambda_{s^m}(s^n))$ for all $n\in\Z$, hence
  $\Theta(s^{2i})=\Con(s^2) $ and $\Theta(s^{2i+1})=\Con(s)\cup\Con(s^3)$
  for all $i\in \Z$.
  Therefore the graphs related to $A$ are the following:
  \begin{center}
  \begin{tikzpicture}
  \node (gamma) at (1,5)  {$\Gamma(A,+)$};
  \node[circle,draw, fill=black!50,inner sep=0pt, minimum width=4pt] (s) at (0,4) {};
  \node[circle,draw, fill=black!50,inner sep=0pt, minimum width=4pt] (s3) at (2,4) {};
  \node[circle,draw, fill=black!50,inner sep=0pt, minimum width=4pt] (s2) at (0,2.8)  {};
  \node[circle,draw, fill=black!50,inner sep=0pt, minimum width=4pt] (s4) at (2,2.8) {};
  \node (slab) at (0,4.4) {$\Con(s)$};
  \node (s3lab) at (2,4.4)  {$\Con(s^3)$};
  \node (s2lab) at (0,2.4)  {$\Con(s^2)$};
  \node (s4lab) at (2,2.4) {$\Con(s^4)$};
  \node (theta) at (5,5)  {$\Theta(A)$};
  \node[circle,draw, fill=black!50,inner sep=0pt, minimum width=4pt] (ts) at (5,4) {};
  \node[circle,draw, fill=black!50,inner sep=0pt, minimum width=4pt] (ts2) at (4,2.8)  {};
  \node[circle,draw, fill=black!50,inner sep=0pt, minimum width=4pt] (ts4) at (6,2.8) {};
  \node (tslab) at (5,4.4) {$\Theta(s)$};
  \node (ts2lab) at (4,2.4)  {$\Theta(s^2)$};
  \node (ts4lab) at (6,2.4) {$\Theta(s^4)$};
  \node (theta) at (9,5)  {\textcolor{blue}{$\varphi(\Gamma(A,+))$}$\subseteq\Theta(A)$};
  \node[circle,draw=blue, fill=blue!50,inner sep=0pt, minimum width=4pt] (t's) at (9,4) {};
  \node[circle,draw=blue, fill=blue!50,inner sep=0pt, minimum width=4pt] (t's2) at (8,2.8)  {};
  \node[circle,draw=blue, fill=blue!50,inner sep=0pt, minimum width=4pt] (t's4) at (10,2.8) {};
  \node (t'slab) at (9,4.4) {$\Theta(s)$};
  \node (t's2lab) at (8,2.4)  {$\Theta(s^2)$};
  \node (t's4lab) at (10,2.4) {$\Theta(s^4)$};
  \foreach \from/\to in {s/s3,s2/s4,ts/ts2,ts/ts4,ts2/ts4,t's/t's2,t's/t's4}
    \draw (\from) -- (\to);
  \draw[blue] (t's2) -- (t's4);
\end{tikzpicture}  
\end{center}
Therefore $\Theta(A)$ contains a homomorphic image of $\Gamma(A,+)$
which is not an induced subgraph of $\Theta(A)$.
\end{exa}

The class of skew braces with empty graphs can be easily characterized.
\begin{pro}
    Let $A$ be a finite skew brace. Then $\Lambda(A)$
    has no vertices if and only if 
    $A$ is a trivial skew brace. 
    Moreover $\Theta(A)$ has no vertices if and only if 
    $A$ is a trivial skew brace of abelian type.
\end{pro}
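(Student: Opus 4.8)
The plan is to translate the graph-theoretic condition ``has no vertices'' into a statement about fixed points and then unwind the definitions of $\lambda$ and $\theta$. By definition the vertices of $\Lambda(A)$ (respectively $\Theta(A)$) are exactly the non-trivial $\lambda$-orbits (respectively $\theta$-orbits), so $\Lambda(A)$ has no vertices if and only if every $\lambda$-orbit is a singleton, that is, $\Fix(A)=A$; and $\Theta(A)$ has no vertices if and only if every $\theta$-orbit is a singleton, that is, $\Fix_\theta(A)=A$. Thus both equivalences reduce to describing when these fixed-point sets exhaust $A$.

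For the first statement I would argue that $\Fix(A)=A$ is equivalent to $\lambda_a=\id$ for every $a\in A$. Writing out $\lambda_a(x)=-a+a\circ x$, the condition $\lambda_a(x)=x$ for all $a,x\in A$ rearranges to $a\circ x=a+x$ for all $a,x\in A$, which says precisely that the two operations coincide, i.e. $A=\Triv(A,+)$. Conversely, if $A$ is trivial then $\lambda_a(x)=-a+a+x=x$ for all $a,x$, so $\Fix(A)=A$ and $\Lambda(A)$ is empty. Both implications follow immediately from the definition of $\lambda$, so no genuine difficulty arises here.

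For the second statement I would invoke the identity $\Fix_\theta(A)=\Fix(A)\cap Z(A,+)$ recorded in the preliminaries. Hence $\Fix_\theta(A)=A$ holds if and only if both $\Fix(A)=A$ and $Z(A,+)=A$. The first condition, by the previous paragraph, is equivalent to $A$ being trivial; the second says exactly that $(A,+)$ is abelian. Combining the two yields that $\Theta(A)$ has no vertices precisely when $A$ is a trivial skew brace of abelian type.

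The argument is essentially a direct unwinding of definitions, so there is no substantial obstacle; the only non-tautological ingredient is the characterization $\Fix_\theta(A)=\Fix(A)\cap Z(A,+)$, which is already established earlier and which I would simply cite. The one point requiring a little care is checking that the $\theta$-case genuinely forces $\Fix(A)=A$ and $Z(A,+)=A$ \emph{separately}, rather than only making their intersection large; this is handled cleanly by the intersection formula, since for subsets $X,Y\subseteq A$ one has $X\cap Y=A$ if and only if $X=A$ and $Y=A$.
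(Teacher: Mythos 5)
Your proof is correct, and it is the argument the paper intends: the paper states this proposition without proof, treating it as an immediate consequence of the definitions, and your unwinding — $\Lambda(A)$ empty iff $\Fix(A)=A$ iff $a\circ x=a+x$ for all $a,x$, together with the recorded identity $\Fix_\theta(A)=\Fix(A)\cap Z(A,+)$ for the $\theta$-case — is exactly that intended argument. The observation that $\Fix(A)\cap Z(A,+)=A$ forces both sets to equal $A$ separately is the right (and only) point needing care, and you handle it correctly.
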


For a finite skew brace of abelian type, its $\lambda$-graph and its $\theta$-graph coincide.

\begin{rem}
    Let $G$ be a finite group.
    Then the trivial skew brace on $G$ has an empty $\lambda$-graph and the $\theta$-graph is equal to the graph $\Gamma(G)$.
    Moreover
    \begin{align*}
        \Gamma(G)&=\Lambda(\op\Triv(G))=\Theta(\op\Triv(G))\\
        &=\Gamma(G^{\op})=\Lambda(\op\Triv(G^{\op}))=\Theta(\op\Triv(G^{\op})).
    \end{align*}
\end{rem}

However, there exist graphs that can not appear as the conjugacy class graph of a group but do appear as the $\theta$- or $\lambda$-graph of a skew brace. A particular example that will be completely characterized in Section~\ref{sec:onevertex} is the graph with a single vertex.

The $\theta$-graph carries some information about the associated set-theoretic solution to a skew brace. In particular, it governs the largest homomorphic image which is a twist solution, i.e. a solution $(X,r)$ with  $r(x,y)=(y,x)$ for all $x,y \in X$. Recall that the associated solution of a skew brace $A$ is given by $r_A(a,b) = (\lambda_a(b),\rho_b(a))$, where $a,b \in A$ and $$ \rho_b(a) = \lambda_a(b)'\circ a \circ b$$ with $c'$ denoting the inverse of $c \in (A,\circ)$. 

\begin{pro}
Let $(A,+,\circ)$ be a skew brace. Denote $A/\theta$ the set of orbits under the action of $\theta$. Denote $(A,r_A)$ the associated solution of $(A,+,\circ)$. Then there exists a homomorphism of solutions $\varphi: (A,r_A) \rightarrow (A/\theta,\tau)$, where $\tau(x,y)=(y,x)$. Moreover, every homomorphism of solutions $(A,r_A)\rightarrow (Y,s)$, where $s$ is the twist on $Y$, can be factored through $\varphi$.
\end{pro}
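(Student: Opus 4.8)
The plan is to take $\varphi\colon A\to A/\theta$ to be the canonical projection sending each element to its $\theta$-orbit, verify directly that it is a morphism of solutions, and then derive the universal property by unwinding what it means for a map into a twist solution to intertwine the braidings. Throughout I would use the two standard identities $x\circ y = x+\lambda_x(y)$ and, for any $z\in A$, $\lambda_{z'}(z)=-z'$ (the latter reading off the additive part of $0=z'\circ z = z'+\lambda_{z'}(z)$).

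First I would check that $\varphi$ is a homomorphism of solutions, i.e.\ that $(\varphi\times\varphi)\circ r_A=\tau\circ(\varphi\times\varphi)$. Since $r_A(a,b)=(\lambda_a(b),\rho_b(a))$ and $\tau(\varphi(a),\varphi(b))=(\varphi(b),\varphi(a))$, this reduces to showing that $\lambda_a(b)$ lies in the $\theta$-orbit of $b$ and that $\rho_b(a)$ lies in the $\theta$-orbit of $a$. The first is immediate from $\lambda_a(b)=\theta_{(0,a)}(b)$. For the second, writing $c=\lambda_a(b)$ and recalling $\rho_b(a)=c'\circ a\circ b$, I would compute
\[
\rho_b(a)=c'\circ(a\circ b)=c'\circ(a+c)=c'+\lambda_{c'}(a)+\lambda_{c'}(c)=c'+\lambda_{c'}(a)-c'=\theta_{(c',c')}(a),
\]
so that $\rho_b(a)\in\Theta(a)$. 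This is exactly the incarnation of the observation in Proposition~\ref{pro: subgraph} that $\Lambda(a)$ and $\Con(a)$ both sit inside $\Theta(a)$.

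For the universal property, I would take an arbitrary morphism of solutions $f\colon(A,r_A)\to(Y,s)$ with $s$ the twist and compare components of $(f\times f)(r_A(a,b))=s(f(a),f(b))=(f(b),f(a))$, obtaining the two identities $f(\lambda_a(b))=f(b)$ and $f(\rho_b(a))=f(a)$ for all $a,b\in A$. The first says $f$ is constant on $\lambda$-orbits. For the second, the key step is to fix $a$ and let $b$ vary: because $\lambda_a$ is a bijection, $c=\lambda_a(b)$ ranges over all of $A$, so the computation above gives $f(\theta_{(x,x)}(a))=f(a)$, i.e.\ $f(x+\lambda_x(a)-x)=f(a)$, for every $x\in A$. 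Substituting $a=\lambda_{x'}(d)$ and invoking $\lambda$-invariance then yields $f(x+d-x)=f(d)$ for all $x,d$, so $f$ is also constant on conjugacy classes of $(A,+)$. Since $\theta_{(x,y)}(c)=x+\lambda_y(c)-x$ is a composite of a $\lambda$-action followed by an additive conjugation, $f$ is constant on every $\theta$-orbit; it therefore descends to a well-defined map $\bar f\colon A/\theta\to Y$ with $f=\bar f\circ\varphi$, and a one-line check (using that $s$ is the twist) shows $\bar f$ is again a morphism of solutions.

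I expect the one genuinely delicate point to be the identity $\rho_b(a)=\theta_{(c',c')}(a)$, which is what recasts the $\rho$-component of the braiding as a conjugation-twisted $\lambda$-action and makes both halves of the statement work. Once this is secured, the existence of $\varphi$ is immediate, and the only residual subtlety in the factorization is the extraction of conjugation-invariance of $f$ from $\rho$-invariance, which I would handle precisely by exploiting the surjectivity of $\lambda_a$ as above.
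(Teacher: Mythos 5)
Your proposal is correct, and its first half—the definition of $\varphi$ as $a\mapsto\Theta(a)$ and the key identity $\rho_b(a)=\theta_{(c',c')}(a)$ with $c=\lambda_a(b)$—coincides with the paper's argument essentially line for line. Where you genuinely diverge is the factorization step. The paper deduces constancy of $\psi$ on $\theta$-orbits from the single explicit identity \eqref{eq:thetasomething}, which writes $\Theta_{(x,y)}(a)$ as a $\lambda$-action applied to a $\rho$-action of carefully chosen elements, and whose proof is left as ``a direct verification''. You avoid that identity altogether: you use surjectivity of $b\mapsto\lambda_a(b)$ to convert $\rho$-invariance of $f$ into invariance under all $\theta_{(x,x)}$, then substitute $a=\lambda_{x'}(d)$ and use $\lambda_x\lambda_{x'}=\id$ to extract invariance under additive conjugation, and finally invoke the decomposition $\theta_{(x,y)}=\con_x\circ\lambda_y$ to conclude invariance on every $\theta$-orbit. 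Both routes are sound. The paper's is shorter on the page but buries a nontrivial computation in the unverified formula; yours replaces it with three transparent one-line steps and makes explicit the structural point that $\theta$-invariance is exactly $\lambda$-invariance together with conjugation-invariance, mirroring the decomposition of $\Theta(a)$ in Proposition~\ref{pro: subgraph}. The remaining pieces—the well-definedness of $\bar f$ on $A/\theta$, the factorization $f=\bar f\circ\varphi$, and the check that $\bar f$ is a morphism into the twist—are the same in both treatments.
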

\begin{proof}
Define $\varphi:A\to A/\theta$ as the map $\varphi(a)=\Theta(a)$
for all $a\in A$.
Recall that $r_A(a,b)=(\lambda_a(b),\rho_b(a))$, for all $a\in A$.
As $a\circ b=a+\lambda_a(b)$ and $\lambda_{a'}(a)=-a'$ for every $a,b\in A$,
\begin{align*}
    \rho_b(a)&=\lambda_a(b)'\circ a \circ b
    =\lambda_a(b)'+\lambda_{\lambda_a(b)'}(a)+\lambda_{\lambda_a(b)'}\lambda_a(b)\\
    &=\lambda_a(b)'+\lambda_{\lambda_a(b)'}(a)-\lambda_a(b)'
    =\Theta_{(\lambda_a(b)',\lambda_a(b)')}(a).
\end{align*}
for every $a,b\in A$.
Since $\lambda_a(b)\in \Theta(b)$ and $\rho_b(a)\in \Theta(a)$, we have that $\varphi$ is a morphism of solution: for all $a,b\in A$
\[
(\varphi\times\varphi)(r_A(a,b))=(\Theta(\lambda_a(b)),\Theta(\rho_b(a))) =
(\Theta(b),\Theta(a)) =
\tau(\varphi(a),\varphi(b)).
\]
Let now $(Y,s)$ be a twist solution and $\psi:(A,r_A)\to (Y,s)$ be a morphism of solutions,
i.e. $s\circ(\psi\times\psi)=(\psi\times\psi)\circ r_A$.
This means that 
\begin{equation}
\label{eq: psi morphism}
    \psi(\lambda_a(b))=\psi(b)\text{ and }
\psi(\rho_b(a))=\psi(a) \text{ for all }a,b\in A.
\end{equation}
Therefore if $c\in \Theta(a)$, then $c=\Theta_{(x,y)}(a)$ for some
$x,y\in A$. 
A direct verification shows that
\begin{equation}\label{eq:thetasomething}\Theta_{(x,y)}(a)=\lambda_{y\circ (-\lambda_{y'}(x))}(\rho_{-\lambda_{a'\circ y'}(x)}(a)), \end{equation}
and so, using Equation ~\eqref{eq: psi morphism}, we get that
\[
\psi(c)=\psi(\lambda_{y\circ (-\lambda_{y'}(x))}(\rho_{-\lambda_{a'\circ y'}(x)}(a)))=\psi(\rho_{-\lambda_{a'\circ y'}(x)}(a))=\psi(a).
\]
Thus we can define a map $\Bar{\psi}:A/\theta\to Y$ as 
$\Bar{\psi}(\Theta(a))=\psi(a)$ for all $\Theta(a)\in A/\theta$.
Moreover, it is easy to see that $\Bar{\psi}$ is a morphism of solutions and, by definition, $\Bar{\psi}\circ \varphi=\psi$.
\end{proof}

Furthermore, a set of generators $X$ of a finite skew brace $A$ that is a subsolution gives rise to an induced subgraph. In particular, this can be applied to the image of a bijective non-degenerate solution $(X,r)$ considered as the generators of $\mathcal{G}(X,r) = \left< (\lambda_x,\rho_x^{-1})\mid x \in X\right>$,
as reformulated as $\mathcal{G}_{\lambda,\rho}$ in \cite{MR4457900} and first defined by Bachiller in \cite{bachiller2018solutions}.

\begin{pro}
    Let $(A,+,\circ)$ be a finite skew brace with associated solution $(A,r_A)$. Let $(X,r)$ be a subsolution of $(A,r_A)$ such that $X$ generates $A$. Then, the induced subgraph obtained by considering the non-trivial $\theta$-orbits that intersect $X$ coincides with the common divisor graph with vertices the non-trivial orbits of $X$ under $\theta$-action of $A$ where edges are drawn if the greatest common divisor is at least two. 
\end{pro}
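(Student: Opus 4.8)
The plan is to prove the stronger statement that the full $\theta$-orbit $\Theta_A(x)$ of every $x\in X$ is already contained in $X$; once this is established the two graphs are not merely isomorphic but literally equal. Indeed, if $\Theta_A(x)\subseteq X$ for all $x\in X$, then the $\theta$-orbits of $A$ that meet $X$ are exactly the orbits of the elements of $X$, each of them lying entirely inside $X$. Hence the partition of $X$ into $\theta$-orbits of $A$ has the sets $\Theta_A(x)$ (for $x\in X$) as its blocks, so that $|\Theta_A(x)\cap X|=|\Theta_A(x)|$ and non-triviality of an orbit is detected equally on either side. The two graphs then have the same vertices, and since the edge condition $\gcd\neq 1$ depends only on these (equal) sizes, the induced subgraph of $\Theta(A)$ and the common divisor graph of the orbits of $X$ coincide.

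The core of the argument is therefore the two invariance claims $\lambda_a(X)=X$ and $\rho_a(X)=X$ for every $a\in A$. First I would record that, because $(X,r)$ is a bijective non-degenerate subsolution, the maps $b\mapsto\lambda_a(b)$ and $a\mapsto\rho_b(a)$ restrict to maps $X\to X$ for all $a,b\in X$; as $X$ is finite and both maps are restrictions of bijections of $A$, they are bijections of $X$. Consider now $H_\lambda=\{a\in A:\lambda_a(X)=X\}$ and $H_\rho=\{a\in A:\rho_a(X)=X\}$. Since $\lambda\colon(A,\circ)\to\Aut(A,+)$ is a group homomorphism, $H_\lambda$ is a subgroup of $(A,\circ)$; using that $b\mapsto\rho_b$ is an anti-homomorphism into $\operatorname{Sym}(A)$, so that $\rho_{u\circ v}=\rho_v\rho_u$ and $\rho_{b'}=\rho_b^{-1}$, one checks that $H_\rho$ is a subgroup of $(A,\circ)$ as well. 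Both contain $X$ by the previous remark, and since $X$ generates $(A,\circ)$ we conclude $H_\lambda=H_\rho=A$, which is exactly the two invariance claims.

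With the invariance in hand I would invoke Equation~\eqref{eq:thetasomething}, established in the proof of the twist-solution proposition above, which expresses an arbitrary orbit element $\theta_{(x,y)}(a)$ in the form $\lambda_g(\rho_h(a))$ for suitable $g,h\in A$. Thus $\Theta_A(a)\subseteq\{\lambda_g(\rho_h(a)):g,h\in A\}$ for every $a$, and in particular for $a=x\in X$ one applies first $\rho_h$ and then $\lambda_g$, both of which preserve $X$ by the invariance claims, to obtain $\Theta_A(x)\subseteq X$. This completes the reduction of the first paragraph.

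I expect the main obstacle to be the second paragraph, namely upgrading the $X$-closure that the subsolution hypothesis supplies only for the generators $x\in X$ to closure under $\lambda_a$ and $\rho_a$ for all $a\in A$. The subtle point is that, unlike $\lambda$, the assignment $b\mapsto\rho_b$ is not a homomorphism into $\Aut(A,+)$ but only an anti-homomorphism; one must therefore verify the subgroup property of $H_\rho$ through the right-action relations $\rho_{u\circ v}=\rho_v\rho_u$ and $\rho_0=\id$, and it is precisely here that the hypothesis that $X$ generates $(A,\circ)$ is used. Everything else, including the passage from equal orbit sizes to equality of the two graphs, is then immediate.
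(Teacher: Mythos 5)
Your proposal is correct and takes essentially the same route as the paper: the paper's proof likewise uses Equation~\eqref{eq:thetasomething} together with the subsolution property and the generation hypothesis to conclude that $X$ is closed under the $\theta$-action, so that every $\theta$-orbit meeting $X$ lies entirely inside $X$ and the two graphs coincide. Your subgroup argument with $H_\lambda$ and $H_\rho$ (using that $\lambda$ is a homomorphism and $\rho$ an anti-homomorphism of $(A,\circ)$) simply makes explicit the closure step that the paper leaves implicit.
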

\begin{proof}
    Let $x \in X$ be contained in a non-trivial $\theta$-orbit $\mathcal{O}$ of $A$. Then, as $X$ is a set of generators and a subsolution, it follows from Equation \eqref{eq:thetasomething} that $X$ is closed under $\theta$. Thus, $\mathcal{O} \subseteq X$. This shows that every non-trivial orbit that contains an element of $X$ is fully within $X$. Hence, the non-trivial $\theta$-orbits of $X$ coincide with the non-trivial $\theta$-orbits containing an element of $X$, which shows the result.
\end{proof}

\section{Basic properties and isoclinism of skew braces}\label{sec:propsisoclinism}
In this section, we characterize the ideal $A^2$ of a skew brace in terms of non-trivial $\lambda$-orbits. Furthermore, we establish several counting arguments, which are crucial for the remainder of the paper. We finish the section by investigating the relation between isoclinism of skew braces and their graphs. 
\begin{pro}
\label{sizeA2:pro}
Let $A$ be a finite skew brace and $\Lambda(x_1),\dots,\Lambda(x_k)$ be the non-trivial $\lambda$-orbits. Then
\[
A^2=\Big\langle \bigcup_{i=1}^k\Lambda(x_i)-\Lambda(x_i)\Big\rangle_+.
\]
In particular, $|A^2|\geq |\Lambda(x_i)|$ for all $i\in\{1,\dots,k\}$.
\end{pro}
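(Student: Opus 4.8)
The plan is to unfold the definition of $A^2$ and then regroup its generators according to which non-trivial $\lambda$-orbit the second argument lies in. Recall that $A^2=A*A=\langle \lambda_a(b)-b : a,b\in A\rangle_+$, so it suffices to analyze the generating set $S=\{\lambda_a(b)-b : a,b\in A\}$ and prove that $\langle S\rangle_+$ equals the right-hand side. The first, easy step is to discard the trivial orbits: if $b\in\Fix(A)$ then $\lambda_a(b)-b=0$ for every $a$, so such $b$ contribute nothing, and only those $b$ lying in a non-trivial orbit $\Lambda(x_i)$ are relevant.

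The key step is to show that, for each fixed non-trivial orbit $\Lambda(x_i)$,
\[
\{\lambda_a(b)-b : a\in A,\ b\in\Lambda(x_i)\}=\Lambda(x_i)-\Lambda(x_i).
\]
For the inclusion $\subseteq$, I would write $b=\lambda_c(x_i)$ and use that $\lambda$ is a group homomorphism $(A,\circ)\to\Aut(A,+)$, so that $\lambda_a(b)=\lambda_a\lambda_c(x_i)=\lambda_{a\circ c}(x_i)$; hence $\lambda_a(b)-b=\lambda_{a\circ c}(x_i)-\lambda_c(x_i)$ is a difference of two elements of $\Lambda(x_i)$. For the reverse inclusion $\supseteq$, given an arbitrary difference $\lambda_a(x_i)-\lambda_c(x_i)$, I set $b=\lambda_c(x_i)$ and $d=a\circ c'$ (with $c'$ the $\circ$-inverse of $c$); then $\lambda_d(b)=\lambda_{d\circ c}(x_i)=\lambda_a(x_i)$, so the prescribed difference is realized as $\lambda_d(b)-b$. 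Since the orbits partition $A$, combining this with the previous paragraph gives $S=\{0\}\cup\bigcup_{i=1}^k(\Lambda(x_i)-\Lambda(x_i))$, and taking the additive subgroup generated by each side yields the desired identity for $A^2$.

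For the ``in particular'' statement, I would fix any $u\in\Lambda(x_i)$ and observe that $\Lambda(x_i)-u\subseteq\Lambda(x_i)-\Lambda(x_i)\subseteq A^2$. Since additive translation by $-u$ is a bijection, $|\Lambda(x_i)-u|=|\Lambda(x_i)|$, and therefore $|A^2|\geq|\Lambda(x_i)|$.

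The only genuinely delicate point, which I expect to be the main obstacle, is the homomorphism bookkeeping: correctly applying $\lambda_a\lambda_c=\lambda_{a\circ c}$ to translate between the $*$-form $\lambda_a(b)-b$ and the difference-of-orbit-elements form, and—crucially for the reverse inclusion—using invertibility in $(A,\circ)$ to guarantee that \emph{every} such difference can be realized. Everything else is routine once this correspondence between the two descriptions of the generating set is established.
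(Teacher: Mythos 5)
Your proof is correct and follows essentially the same route as the paper's: discard the elements of $\Fix(A)$, write each remaining generator $\lambda_a(b)-b$ with $b=\lambda_c(x_i)$ as $\lambda_{a\circ c}(x_i)-\lambda_c(x_i)$, and use $\circ$-inverses to realize every difference $\lambda_a(x_i)-\lambda_c(x_i)$ as a generator of $A^2$. Your explicit translation argument for the cardinality bound is also fine (the paper leaves that step implicit).
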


\begin{proof}
If $b\in\Fix(A)$, then $\lambda_a(b)-b=0$ for all $a\in B$. Thus 
\[
A^2=\langle \lambda_a(b)-b\colon a\in A, b\in A\setminus\Fix(A)\rangle_+.
\]
For every $b\in A\setminus\Fix(A)$ there exist $i\in\{1,\dots, k\}$ and $c\in A$ such that $b=\lambda_c(x_i)$. Thus every generator of $A^2$ is of the form
\[
\lambda_a(b)-b=\lambda_{a\circ c}(x_i)-\lambda_c(x_i)\in \Lambda(x_i)-\Lambda(x_i).
\]
The converse is easily seen, as $$ \lambda_c(x_i)-\lambda_a(x_i) = \lambda_{c\circ a'}(\lambda_a(x_i))-\lambda_a(x_i) \in A^2,$$ which shows the result.
\end{proof}

Similarly one characterizes $A'$ using non-trivial $\theta$-orbits.

\begin{pro}
\label{sizeA':pro}
Let $A$ be a skew brace and $\Theta(y_1),\dots,\Theta(y_h)$ be the non-trivial $\theta$-orbits. Then
\[
A'=\Big\langle \bigcup_{j=1}^h\Theta(y_j)-\Theta(y_j)\Big\rangle_+.
\]
In particular, $|A'|\geq |\Theta(y_j)|$ for all $j\in\{1,\dots,h\}$.
\end{pro}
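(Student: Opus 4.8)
The plan is to mirror the proof of Proposition~\ref{sizeA2:pro}, replacing the $\lambda$-action by the $\theta$-action and the ideal $A^2$ by the commutator $A'$. Recall that $A'=\langle [A,A]_+\cup A^2\rangle_+$. The crucial first step is to show that $A'$ is exactly the additive subgroup generated by all $\theta$-differences, that is,
\[
A'=\big\langle \theta_{(a,b)}(c)-c : a,b,c\in A\big\rangle_+ .
\]
For the inclusion ``$\supseteq$'' I would expand $\theta_{(a,b)}(c)-c=a+\lambda_b(c)-a-c$ and rewrite it as $[a,\lambda_b(c)]_+ + (\lambda_b(c)-c)$, where $[x,y]_+=x+y-x-y$; the first summand lies in $[A,A]_+$ and the second in $A^2$, so each generator lies in $A'$. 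For ``$\subseteq$'' I would recover the two generating families of $A'$ separately: taking $a=0$ gives $\theta_{(0,b)}(c)-c=\lambda_b(c)-c$, which generate $A^2$, and taking $b=0$ gives $\theta_{(a,0)}(c)-c=a+c-a-c=[a,c]_+$, which generate $[A,A]_+$, here using that the additive identity $0$ is also the $\circ$-identity so that $\lambda_0=\id$.

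Second, I would pass to orbit representatives exactly as before. If $c\in\Fix_\theta(A)$ then $\theta_{(a,b)}(c)-c=0$, so only elements of non-trivial $\theta$-orbits contribute and $A'=\langle \theta_{(a,b)}(c)-c : c\notin\Fix_\theta(A)\rangle_+$. Every such $c$ lies in some non-trivial orbit $\Theta(y_j)$; since $\theta$-orbits are invariant under the action, $\theta_{(a,b)}(c)$ also lies in $\Theta(y_j)$, whence $\theta_{(a,b)}(c)-c\in\Theta(y_j)-\Theta(y_j)$, giving $A'\subseteq\big\langle\bigcup_j \Theta(y_j)-\Theta(y_j)\big\rangle_+$. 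For the reverse inclusion, any $u,v\in\Theta(y_j)$ satisfy $u=\theta_h(v)$ for a suitable $h$ in $(A,+)\rtimes_\lambda(A,\circ)$, so $u-v=\theta_h(v)-v$ is one of the generators of $A'$; hence $\Theta(y_j)-\Theta(y_j)\subseteq A'$.

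Finally, for the ``in particular'' claim I would fix $j$ and observe that $\Theta(y_j)-y_j\subseteq A'$, each element being of the form $\theta_g(y_j)-y_j$, and since translation by $-y_j$ is a bijection, $|A'|\geq|\Theta(y_j)-y_j|=|\Theta(y_j)|$. The only genuinely delicate point is the first step: correctly splitting $\theta_{(a,b)}(c)-c$ into an additive commutator and a $*$-element, and confirming that the two specializations $a=0$ and $b=0$ recover precisely the generators of $A^2$ and of $[A,A]_+$. Everything after that is the same bookkeeping with orbits already used in Proposition~\ref{sizeA2:pro}.
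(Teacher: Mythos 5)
Your proposal is correct and rests on the same key identity as the paper's proof, namely the decomposition $\theta_{(a,b)}(c)-c=[a,\lambda_b(c)]_++b*c$ splitting a $\theta$-displacement into an additive commutator and a $*$-element, followed by the same orbit bookkeeping. The only difference is organizational: you package this as the clean intermediate characterization $A'=\big\langle \theta_{(a,b)}(c)-c\colon a,b,c\in A\big\rangle_+$, whereas the paper obtains the inclusion $A^2\subseteq\big\langle\bigcup_j\Theta(y_j)-\Theta(y_j)\big\rangle_+$ by citing Proposition~\ref{sizeA2:pro} together with the containment of non-trivial $\lambda$-orbits in non-trivial $\theta$-orbits, and proves the reverse inclusion by decomposing $\theta_{(a,b)}(y_j)-\theta_{(c,d)}(y_j)$ directly.
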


\begin{proof}
Let $\Lambda(x_1),\dots,\Lambda(x_k)$ be the non-trivial $\lambda$-orbits of $A$.
Since $\Fix_\theta(A)$ is contained in $\Fix(A)$,
$\Theta(x_i)$ is non-trivial  for every $i\in\{1,\dots,k\}$, hence there exist $j\in\{1,...,h\}$ such that
$\Lambda(x_i)\subseteq \Theta(x_i)=\Theta(y_j)$.
Therefore, using Proposition~\ref{sizeA2:pro}, we obtain that
\[
A^2=\Big\langle \bigcup_{i=1}^k\Lambda(x_i)-\Lambda(x_i)\Big\rangle_+\subseteq \Big\langle \bigcup_{j=1}^h\Theta(y_j)-\Theta(y_j)\Big\rangle_+.
\]
Moreover $[a,b]_+=\theta_{(a,0)}(b)-b\in \Theta(b)-\Theta(b)$
for every $a,b\in A$.
Therefore, if $b\in \Fix_\theta(A)$ then $[a,b]_+=0$ and 
if $b\in A\setminus\Fix_\theta$ there exist $j\in\{1,\dots, h\}$
such that $\Theta(b)=\Theta(y_j)$. 
Therefore  $[A,A]_+\subseteq \Big\langle \bigcup_{j=1}^h\Theta(y_j)-\Theta(y_j)\Big\rangle_+$ and so \[
A'=\langle [A,A]_+,A^2\rangle\subseteq \Big\langle \bigcup_{j=1}^h\Theta(y_j)-\Theta(y_j)\Big\rangle_+.
\]
Vice versa 
\[
\theta_{(a,b)}(y_j)-\theta_{(c,d)}(y_j)=[a,\lambda_b(y_j)]_++b*y_j-d*y_j+[c,\lambda_d(y_j)]_+\in A'.
\]
Hence $A'\supseteq \Big\langle \bigcup_{j=1}^h\Theta(y_j)-\Theta(y_j)\Big\rangle_+$.
\end{proof}

In the following propositions, we set up general numerical results
on the number of orbits of a particular size of an action of a group 
on another group by automorphisms.

\begin{pro}
\label{fix:pro}
Let $G$ and $H$ be finite groups such that $G$ acts on $H$ by automorphisms and for all $m\in\Z_{\geq 0}$ let $\mathbf{c}(m)$ be the number
of orbits of size $m$.
Then $\left|H^G\right|$ divides $m\mathbf{c}(m)$ for all $m\in\Z_{\geq 0}$.
\end{pro}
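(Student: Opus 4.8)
The plan is to show that the set of elements lying in an orbit of size exactly $m$ is a union of cosets of the fixed subgroup $H^G$, which immediately forces its cardinality to be divisible by $|H^G|$.

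First I would record that $H^G=\{h\in H : g\cdot h=h \text{ for all } g\in G\}$ is a subgroup of $H$: since each $g$ acts by an automorphism, the set of common fixed points contains the identity and is closed under products and inverses. Writing $H_m=\{h\in H : |\mathcal{O}(h)|=m\}$ for the set of elements whose orbit has size $m$, the partition of $H$ into orbits gives $|H_m|=m\,\mathbf{c}(m)$, so the statement reduces to proving that $|H^G|$ divides $|H_m|$.

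The crux is that multiplying by a fixed element preserves orbit sizes. Fix $z\in H^G$ and $h\in H$. Since the action is by automorphisms and $z$ is fixed, for every $g\in G$ we have $g\cdot(zh)=(g\cdot z)(g\cdot h)=z\,(g\cdot h)$; hence $\mathcal{O}(zh)=z\,\mathcal{O}(h)$. As $x\mapsto zx$ is a bijection of $H$, it follows that $|\mathcal{O}(zh)|=|\mathcal{O}(h)|$, so $zh\in H_m$ whenever $h\in H_m$. Therefore $H^G H_m=H_m$, i.e. $H_m$ is a union of cosets of the form $H^G h$.

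Finally, each such coset has cardinality $|H^G|$ and distinct cosets are disjoint, so $|H^G|$ divides $|H_m|=m\,\mathbf{c}(m)$, which is the claim. The case $m=0$ is vacuous since $\mathbf{c}(0)=0$, and $m=1$ simply recovers $\mathbf{c}(1)=|H^G|$. I do not foresee a serious obstacle here; the only delicate point is the use of the automorphism property to identify $\mathcal{O}(zh)$ with the translate $z\,\mathcal{O}(h)$, which is exactly what makes $H_m$ a union of $H^G$-cosets.
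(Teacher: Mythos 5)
Your proof is correct and follows essentially the same approach as the paper: both show that the set of elements lying in orbits of size $m$ is a union of cosets of $H^G$, using that multiplication by a fixed element preserves orbit sizes (the paper phrases this via $\Stab(hx)=\Stab(h)$ and uses right cosets $aH^G$, while you translate orbits directly via $\mathcal{O}(zh)=z\,\mathcal{O}(h)$ and use left cosets, a purely cosmetic difference).
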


\begin{proof}
    Let $m\geq1$. 
    Since, for all $h\in H$ and $x\in H^G$,
    $$\Stab(hx)=\Stab(h) 
    \textnormal{ and } |\mathcal{O}(hx)|=|\mathcal{O}(h)|,$$
    it follows that 
    \[
    \bigsqcup\limits_{|\mathcal{O}(a)|=m}\left(a H^G\right)
    =\bigsqcup\limits_{|\mathcal{O}(a)|=m}\mathcal{O}(a).
    \]
    Hence, the union of orbits of size $m$ is 
    a union of some cosets of $ H^G$. 
    Thus $k\left| H^G\right|=m\mathbf{c}(m)$ for some $k\in\Z_{\geq 0}$. 
\end{proof}

\begin{pro}
\label{pro:p-groups orbits}
  Let $p$ be a prime number and 
  $G$ and $H$ be a finite $p$-groups.
  Let $G$ be a group acting on $H$ by automorphisms.
  Then $p-1$ divides $\mathbf{c}(p^m)$ for all $m\in\Z_{\geq 0}$. 
\end{pro}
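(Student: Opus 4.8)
The plan is to let the multiplicative power maps of $H$ act alongside $G$: they commute with the $G$-action and regroup the elements lying in orbits of size $p^{m}$ into blocks whose cardinalities are divisible by $p-1$.

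Fix $m\geq 1$ and write $S=\{h\in H:\lvert\mathcal{O}(h)\rvert=p^{m}\}$ for the union of all orbits of size $p^{m}$. As these orbits are pairwise disjoint, $\lvert S\rvert=p^{m}\,\mathbf{c}(p^{m})$; note that only powers of $p$ occur as orbit sizes, since $G$ is a $p$-group. For each integer $k$ coprime to $p$ I would introduce the power map $\psi_{k}\colon H\to H,\ h\mapsto h^{k}$. Because $G$ acts by automorphisms we have $g\cdot h^{k}=(g\cdot h)^{k}$, so $\psi_{k}$ is $G$-equivariant; moreover $\psi_{j}\circ\psi_{k}=\psi_{jk}$ since $h^{k}$ lies in $\langle h\rangle$, and $\psi_{k}$ is bijective because $k$ is invertible modulo the exponent of $H$. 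Thus the $\psi_{k}$ generate a finite group $\Gamma$ that acts on $H$ and commutes with $G$.

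The key step is that $\Gamma$ preserves $G$-orbit sizes, so that $S$ is a union of $\Gamma$-orbits. Indeed $\psi_{k}(\mathcal{O}(h))=\mathcal{O}(h^{k})$, and $\psi_{k}$ is injective on $\mathcal{O}(h)$: all elements of $\mathcal{O}(h)$ have the same order $p^{n}$ as $h$, and from $h_{1}^{k}=h_{2}^{k}$ one recovers $h_{1}=h_{2}$ by raising to a power $k'$ with $kk'\equiv 1 \pmod{p^{n}}$. Hence $\lvert\mathcal{O}(h^{k})\rvert=\lvert\mathcal{O}(h)\rvert$, and $\Gamma$ stabilises $S$. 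Now the $\Gamma$-orbit of an element $h$ of order $p^{n}$ is exactly the set of generators of $\langle h\rangle$, of which there are $\varphi(p^{n})=p^{n-1}(p-1)$. Since $m\geq 1$ every element of $S$ is non-trivial, so $n\geq 1$ and each $\Gamma$-orbit contained in $S$ has size divisible by $p-1$. Consequently $(p-1)\mid\lvert S\rvert=p^{m}\,\mathbf{c}(p^{m})$, and as $\gcd(p-1,p^{m})=1$ we conclude $(p-1)\mid\mathbf{c}(p^{m})$.

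The main obstacle is exactly the verification that the power maps preserve $G$-orbit sizes, i.e.\ the injectivity of $\psi_{k}$ on a single orbit; this is what keeps every $\Gamma$-orbit (automatically of size divisible by $p-1$) inside one $S$, and it is where the hypothesis $\gcd(k,p)=1$ is used. One caveat worth recording is the boundary value $m=0$: there $S=H^{G}$ also contains the identity, which is fixed by every $\psi_{k}$, so the count only yields $\mathbf{c}(1)\equiv 1\pmod{p-1}$; the divisibility is therefore the meaningful assertion for the non-trivial orbits, that is for $m\geq 1$ (the case $p=2$ being trivial throughout).
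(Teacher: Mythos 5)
Your proof is correct on the meaningful range $m\geq 1$, and it reaches the conclusion by a genuinely different mechanism than the paper, although both start from the same observation: the power maps $a\mapsto a^{j}$ with $\gcd(j,p)=1$ commute with the $G$-action (as $G$ acts by automorphisms) and therefore preserve orbit sizes. The paper descends this to an action of $(\Z/p\Z)^{\times}$ on the \emph{set of orbits} of size $p^{m}$, via $f_{j}(\mathcal{O}(a))=\mathcal{O}(a^{j})$, and its technical heart is a \emph{freeness} argument: if $x\cdot a=a^{j}$ with $x\in G$ of order $p^{\beta}$, then $a=a^{j^{p^{\beta}}}$, so $j^{p^{\beta}}\equiv 1\pmod p$, and since $\gcd(p^{\beta},p-1)=1$ this forces $j=1$; freeness then partitions the size-$p^{m}$ orbits into blocks of size exactly $p-1$. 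You instead stay at the level of \emph{elements}: the orbits of your group $\Gamma$ of power maps are the generator sets of cyclic subgroups, of cardinality $\varphi(p^{n})=p^{n-1}(p-1)$ with $n\geq 1$ on the set $S$, whence $p-1$ divides $|S|=p^{m}\mathbf{c}(p^{m})$, and coprimality of $p-1$ with $p^{m}$ finishes. So you trade the paper's freeness argument (which you never need) for a final coprimality division (which the paper never needs); both are sound, and yours is arguably the more elementary count. Your caveat about $m=0$ is also well taken and is a point in your favor: as literally stated the proposition fails for $m=0$ and odd $p$ (a trivial action on $H=\Z/3\Z$ has $\mathbf{c}(1)=3$, not divisible by $2$), the paper's own argument silently breaks at the identity orbit (there $p^{\alpha}=1$, so the congruence $j^{p^{\beta}}\equiv 1\pmod{p^{\alpha}}$ is vacuous and $j=1$ cannot be deduced), and indeed the paper only ever applies the result to non-trivial orbits, i.e.\ to $m\geq 1$.
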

\begin{proof}
Let $m\in\Z_{\geq0}$. We prove that $p-1$ divides $\mathbf{c}(p^m)$. $(\Z/p\Z)^{\times}$ acts
on the set of orbits of size $p^m$ by $f_j(\mathcal{O}(a))=\mathcal{O}(a^j)$
where by $a^j$ we mean $a\cdot \ldots \cdot a,$ $j$ times.
It is well-defined since $|\mathcal{O}(a^j)|=|\mathcal{O}(a)|$
and $f_j$ is a bijection for all $j\in(\Z/p\Z)^{\times}$.
In fact $f_j(\mathcal{O}(a))=f_j(\mathcal{O}(b))$ if and only if
there exists $x\in G$ such that $b^j=x\cdot a^j$.
Taking $s\in\Z_{\geq 0}$ such that $sj\equiv 1\pmod{p^d}$,
where $p^d$ is the order of both $a$ and $b$, we get
\[
b=b^{sj}=(x\cdot(a^j)^s=x\cdot (a^{sj})=x\cdot a.
\]
Finally, if $f_j(\mathcal{O}(a))=\mathcal{O}(a)$, then $j=1$.
Indeed if $f_j(\mathcal{O}(a))=\mathcal{O}(a)$,
then there exists $x\in G$ 
of multiplicative order $p^\beta$ such that $x\cdot a=a^j$.
Thus 
 \[
a=x^{p^\beta}\cdot a=a^{j^{p^\beta}}
\]
and $j^{p^\beta}\equiv 1\pmod{p^\alpha}$,
where $p^\alpha$ is the order of $a$.
Hence $j^{p^\beta}\equiv 1\pmod{p}$ and
the order of $j$ in $\Z/p\Z^\times$ divides $p^\beta$, so $j=1$.
Thus $p-1$ divides the size of every orbit of this new action.
Therefore, being a union of orbits, the set of all orbits of size $p^m$ 
has cardinality divisible by $p-1$.
\end{proof}

\begin{cor}
     Let $p$ be a prime number and 
$G$ and $H$ be finite $p$-groups such that
$G$ acts on $H$ by automorphisms.
Denote $p^s$ the minimal size of a non-trivial orbits. Then, one of the following holds \begin{enumerate}
         \item $\left| H^G\right|=p^s$ and $\mathbf{c}(p^s)\equiv p-1 \pmod{p(p-1)},$
         \item $\left| H^G\right|>p^s$ and $\mathbf{c}(p^s)\equiv 0 \pmod{p(p-1)}$.
     \end{enumerate}      
\end{cor}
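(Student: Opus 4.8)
The plan is to combine the two divisibility facts already at hand—Proposition~\ref{fix:pro} and Proposition~\ref{pro:p-groups orbits}—with a single refined instance of the class equation, read modulo $p^{s+1}$. First I would record two elementary observations. Since $G$ acts by automorphisms, $H^G$ is a subgroup of the $p$-group $H$, so $\left|H^G\right|=p^t$ for some $t\geq 0$; and every orbit size is a power of $p$, so a non-trivial orbit has size in $\{p^s,p^{s+1},\dots\}$, hence is divisible by $p^s$ and, unless it has the minimal size, by $p^{s+1}$. Because $H$ has an orbit of size $p^s\geq p$ we have $\left|H\right|>p^s$, and as $\left|H\right|$ is a power of $p$ this forces $p^{s+1}\mid\left|H\right|$.

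Next I would write the class equation
\[
\left|H\right|=\left|H^G\right|+p^s\,\mathbf{c}(p^s)+\sum_{j\geq s+1}p^j\,\mathbf{c}(p^j)
\]
and reduce it modulo suitable powers of $p$. Reducing modulo $p^s$ gives $\left|H^G\right|\equiv\left|H\right|\equiv 0\pmod{p^s}$, so $t\geq s$; this is precisely what guarantees that the two alternatives $\left|H^G\right|=p^s$ and $\left|H^G\right|>p^s$ are exhaustive, i.e. that the dichotomy of the statement is genuine. The key step is then to reduce modulo $p^{s+1}$: every term of the last sum lies in the zero class and $p^{s+1}\mid\left|H\right|$, so
\[
\left|H^G\right|+p^s\,\mathbf{c}(p^s)\equiv 0\pmod{p^{s+1}}.
\]
If $\left|H^G\right|=p^s$, dividing by $p^s$ yields $\mathbf{c}(p^s)\equiv -1\equiv p-1\pmod p$; if $\left|H^G\right|>p^s$ then $t\geq s+1$, so $\left|H^G\right|\equiv 0\pmod{p^{s+1}}$ and we get $\mathbf{c}(p^s)\equiv 0\pmod p$. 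In either case Proposition~\ref{pro:p-groups orbits} supplies the complementary congruence $\mathbf{c}(p^s)\equiv 0\pmod{p-1}$, and since $\gcd(p,p-1)=1$ the Chinese Remainder Theorem pins down $\mathbf{c}(p^s)$ modulo $p(p-1)$, giving the residue $p-1$ in the first case and $0$ in the second, exactly as claimed.

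The real content—and the step I expect to be the only genuine obstacle—is recognizing that one must work modulo $p^{s+1}$ rather than modulo $p$. The naive divisibility $\left|H^G\right|\mid p^s\mathbf{c}(p^s)$ of Proposition~\ref{fix:pro} settles the case $\left|H^G\right|>p^s$ but is too coarse to see the residue $p-1$ in the equality case. Isolating the contribution $p^s\mathbf{c}(p^s)$ of the minimal orbits by pushing all larger non-trivial orbits into the zero class modulo $p^{s+1}$ is what makes the exact residue visible; once that is done, the remaining combination with the $(p-1)$-divisibility via the Chinese Remainder Theorem is routine bookkeeping.
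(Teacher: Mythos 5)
Your proof is correct and takes essentially the same route as the paper: reducing the class equation modulo $p^{s+1}$ is exactly the paper's step of dividing it by $p^s$ and reading it modulo $p$, and both arguments then combine the resulting mod-$p$ congruence with Proposition~\ref{pro:p-groups orbits} via coprimality of $p$ and $p-1$ to pin down $\mathbf{c}(p^s)$ modulo $p(p-1)$. Your mod-$p^s$ reduction establishing $\left|H^G\right|\geq p^s$ likewise mirrors the paper's observation that $s\leq z$, so the two proofs differ only in presentation.
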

\begin{proof}
Let $\left|H^G\right|=p^z$ and $|H|=p^n$.
By the class equation
$$p^n=p^z+\sum_{m\geq s}\mathbf{c}(p^m)p^m,$$
we get $s\leq z$ and $p^{n-s}=p^{z-s}+\mathbf{c}(p^s)+\sum_{m> s}\mathbf{c}(p^m)p^{m-s}$.
If $s=z$, then, as $p-1$ divides $\mathbf{c}(p^s)$ by Proposition~\ref{pro:p-groups orbits}, it holds that 
\[
\mathbf{c}(p^s)\equiv p-1\pmod{p(p-1)}.
\]
Otherwise, $s<z$ so $p$ divides $\mathbf{c}(p^s).$ Thus, by Proposition~\ref{pro:p-groups orbits}, one obtains
\[\mathbf{c}(p^s)\equiv 0\pmod{p(p-1)}. \qedhere
\]
\end{proof}

We end this section by examining the link between isoclinism and the $\theta$- and $\lambda$-graphs.
In particular, we show that both graphs are invariant under isoclinism if the skew braces are of the same size.

\begin{defn}[\cite{LeVe}]
\label{defn:isoclinism}   
 Two skew braces $A$ and $B$ are \emph{isoclinic} if there are two isomorphisms $\xi\colon A/\Ann(A)\to B/\Ann(B)$ and $\delta\colon A'\to B'$ such that the following diagram commutes
\begin{center}
\begin{tikzcd}
A' \arrow[r, "\phi_+^A",leftarrow] \arrow[d, "\delta"]
& A/\Ann(A)\times A/\Ann(A) \arrow[r,"\phi_*^A"]\arrow[d, "\xi\times\xi" ] & A'\arrow[d, "\delta"]\\
B'\arrow[r, "\phi_+^B",leftarrow] & B/\Ann(B)\times B/\Ann(B) \arrow[r,"\phi_*^B"]& B'
\end{tikzcd}
\end{center}
where $\phi_+^A,\phi_*^A\colon A/\Ann(A)\times A/\Ann(A)\to A'$ are defined by $\phi_+^A(\overline{a},\overline{b})=[a,b]_+$ and $\phi_*^A(\overline{a},\overline{b})=a*b$ for all $a,b\in A.$
\end{defn}

\begin{pro}(\cite[Proposition 3.10]{LeVe})
    Let $A$ and $B$ be two isoclinic skew braces. 
Then $(A,+)$ and $(B,+)$ are isoclinic groups, 
$(A,\circ)$ and $(B,\circ)$ are isoclinic groups.
\end{pro}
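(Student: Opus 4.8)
The plan is to manufacture, out of the skew-brace isoclinism data $\xi\colon A/\Ann(A)\to B/\Ann(B)$ and $\delta\colon A'\to B'$ (both skew-brace isomorphisms), the two pieces of group-isoclinism data for each operation $+$ and $\circ$, running the two arguments in parallel. Two structural facts make this possible: first, $\Ann(A)\subseteq Z(A,+)\cap Z(A,\circ)$, the first inclusion because $\Ann(A)\subseteq\Soc(A)\subseteq Z(A,+)$ and the second by definition; second, both derived subgroups $[A,A]_+$ and $[A,A]_\circ$ lie in $A'$, the latter because $A/A'$ is a trivial skew brace of abelian type (as $A^2\subseteq A'$ forces $\lambda$ trivial and hence $\circ=+$, while $[A,A]_+\subseteq A'$ makes it commutative), so $a\circ b\equiv b\circ a\pmod{A'}$. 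Thus $\xi$ is a candidate to descend to $A/Z\to B/Z$ for each centre and $\delta$ restricts to each derived subgroup; the content is to check that these are well defined isomorphisms intertwining the commutator maps.

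For the additive case, the form $\phi_+^A(\bar a,\bar b)=[a,b]_+$ has image generating $[A,A]_+$, and its left radical $\{\bar a:\phi_+^A(\bar a,\bar b)=0\text{ for all }\bar b\}$ equals $Z(A,+)/\Ann(A)$, since $[a,b]_+=0$ for all $b$ means $a\in Z(A,+)$ and $\Ann(A)\subseteq Z(A,+)$. As $\delta$ is injective and $\xi$ bijective, the relation $\delta\circ\phi_+^A=\phi_+^B\circ(\xi\times\xi)$ shows that $\phi_+^A(\bar a,\cdot)\equiv 0$ iff $\phi_+^B(\xi\bar a,\cdot)\equiv 0$, whence $\xi\bigl(Z(A,+)/\Ann(A)\bigr)=Z(B,+)/\Ann(B)$, and that $\delta$ carries $\langle\operatorname{im}\phi_+^A\rangle=[A,A]_+$ isomorphically onto $[B,B]_+$. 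By the third isomorphism theorem $\xi$ descends to $\bar\xi\colon A/Z(A,+)\to B/Z(B,+)$, and we put $\bar\delta=\delta|_{[A,A]_+}$. The compatibility $\bar\delta([a,b]_+)=[\bar\xi\bar a,\bar\xi\bar b]_+$ is exactly $\delta\circ\phi_+^A=\phi_+^B\circ(\xi\times\xi)$ pushed through these quotients, so $(A,+)$ and $(B,+)$ are isoclinic.

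For the multiplicative case I would follow the same template, the difficulty being that the $\circ$-commutator and $\circ$-centre are not among the given data and must be reconstructed from $\phi_+$ and $\phi_*$. Writing $a\circ x=a+(a*x)+x$, membership $a\in Z(A,\circ)$ is equivalent to $a+(a*x)+x=x+(x*a)+a$ for all $x$; in abelian type this collapses to $a*x=x*a$, a condition visible on the form $\phi_*$, so that $\delta\circ\phi_*^A=\phi_*^B\circ(\xi\times\xi)$ yields $\xi\bigl(Z(A,\circ)/\Ann(A)\bigr)=Z(B,\circ)/\Ann(B)$ just as before. For the derived subgroup one must express $[a,b]_\circ=a\circ b\circ a'\circ b'\in A'$ through the forms: its leading term is $[a,b]_++(a*b)-(b*a)$, a combination of $\phi_+$ and $\phi_*$, with corrections of higher order coming from the $\lambda$-conjugations implicit in $\circ$.

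The main obstacle is precisely to upgrade this leading-order computation to an exact identity exhibiting $[a,b]_\circ$ (and, dually, the defining relation of $Z(A,\circ)$) as depending only on $\bar a,\bar b\in A/\Ann(A)$ and as built from $\phi_+^A,\phi_*^A$ by skew-brace operations internal to $A'$; for only then does the skew-brace isomorphism $\delta$, which by hypothesis intertwines exactly $\phi_+$ and $\phi_*$, transport $[A,A]_\circ$ onto $[B,B]_\circ$ and make the $\circ$-commutator maps correspond. The subtle point is that the naive expression involves additive conjugation by the elements $a,b$ themselves, which $\delta$ does not see directly; I expect to resolve this by an induction along a central filtration of $A'$ (for instance the series $A^{(n)}$), rewriting at each stage the conjugation corrections, modulo the next term of the filtration, as star- and additive-commutator values already governed by $\phi_+$ and $\phi_*$. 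The abelian-type case, where $[A,A]_+=0$ and these corrections vanish, serves as the transparent base case and already displays the full mechanism, after which the inductive step completes the proof that $(A,\circ)$ and $(B,\circ)$ are isoclinic.
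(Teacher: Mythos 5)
Your additive half is correct and complete: the left radical of $\phi_+^A$ is exactly $Z(A,+)/\Ann(A)$, the commuting square together with injectivity of $\delta$ and surjectivity of $\xi$ forces $\xi\bigl(Z(A,+)/\Ann(A)\bigr)=Z(B,+)/\Ann(B)$, and $\delta$ restricts to an isomorphism $[A,A]_+\to[B,B]_+$ because these are the additive subgroups generated by the images of $\phi_+^A$ and $\phi_+^B$. (For the record, the paper itself gives no proof of this statement; it cites \cite[Proposition 3.10]{LeVe}.) The multiplicative half, however, is a plan rather than a proof, and the hole is the one you yourself flag: you never produce an exact identity expressing $[a,b]_\circ$, nor the defining condition of $Z(A,\circ)$ in general (you only treat abelian type), through data that $(\xi,\delta)$ transports. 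The proposed repair, ``induction along a central filtration of $A'$ (for instance the series $A^{(n)}$)'', is not carried out and cannot work in general: for a skew brace that is not left nilpotent the series $A^n$ need not terminate, and nothing guarantees a central filtration of $A'$ compatible with the relevant actions, so the argument has no base case or termination outside special classes.

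The missing idea that closes the gap, and makes any filtration unnecessary, is the compatibility of $\xi$ and $\delta$ on $A'$: for every $x\in A'$ one has $\xi(\overline{x})=\overline{\delta(x)}$. This is automatic from the hypotheses, because $A'$ is additively generated by the elements $[a,b]_+$ and $a*b$, and for these $\xi(\overline{[a,b]_+})=[\xi\overline{a},\xi\overline{b}]_+=\overline{\phi_+^B(\xi\overline{a},\xi\overline{b})}=\overline{\delta([a,b]_+)}$ (likewise for $a*b$ using $\phi_*$), after which one extends additively. Once this is available, the ``conjugation corrections'' you worry about are transportable \emph{exactly}, not merely to leading order: for $x\in A'$ and $a\in A$ one has $a+x-a=x+[-x,a]_+=x+\phi_+^A(\overline{-x},\overline{a})$ and $\lambda_a(x)=(a*x)+x=\phi_*^A(\overline{a},\overline{x})+x$, and the arguments appearing here are classes of elements of $A$ or of $A'$, both of which $(\xi,\delta)$ intertwines. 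Expanding $a\circ b=a+(a*b)+b$ and rewriting $[a,b]_\circ$, respectively the condition $a\circ b=b\circ a$ for all $b$, with these two identities expresses everything through $\phi_+^A$, $\phi_*^A$ and the additive structure of $A'$; transporting by $(\xi,\delta)$ then yields $\delta\bigl([A,A]_\circ\bigr)=[B,B]_\circ$, $\xi\bigl(Z(A,\circ)/\Ann(A)\bigr)=Z(B,\circ)/\Ann(B)$, and the commutativity of the $\circ$-commutator squares. As written, your argument establishes only the additive statement.
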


Let $A$ be a skew brace. We can define 
an action 
\[
\overline{\rho}:(A,\circ)/\Ann(A)\to \Aut\left((A,+)/\Ann(A)\right),
\text{ by }\overline{\rho}_{\bar{a}}(\bar{b})=a\circ b-a.
\]
Moreover, we also have an action
\[
\alpha_A:(A/\Ann(A),+)\rtimes_{\overline{\rho}}(A/\Ann(A),\circ)\to \Aut(A,+)
\]
given by $\alpha_{(\bar{a},\bar{b})}(c)=a+\overline{\rho}_{\bar{b}}(\bar{c})-a$.

If  $H$ is a subgroup of $(A/\Ann(A),+)\rtimes_{\overline{\rho}}(A/\Ann(A),\circ)$,
we call \emph{$H$-orbit} the orbit of an element of $A$
under the action $\alpha$ restricted to the subgroup $H$.

\begin{thm}(\cite[Theorem 3.19]{LeVe})
\label{thm:isoclinc orbits}
Let $A$ and $B$ be finite isoclinic skew braces. 
Let $H$ be a subgroup of $(A/\Ann(A),+)\rtimes_{\overline{\rho}}(A/\Ann(A),\circ)$
and $K$ the corresponding (via isoclinism) subgroup of $(B/\Ann(B),+)\rtimes_{\overline{\rho}}(B/\Ann(B),\circ)$.
For $c\in \Z_{\geq 1}$, let $m_1$ be the number of $H$-orbits of size $c$ and 
$m_2$ the $K$-orbits of size $c$.
Then 
\[
m_1=m_2|A|/|B|.
\]
\end{thm}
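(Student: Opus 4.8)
The plan is to count the union of $H$-orbits of each size and to show that, after dividing out $\Ann$, this count is an isoclinism invariant. Write $\mathcal{O}_H(x)$ for the orbit of $x\in A$ under $\alpha$ restricted to $H$, fix the size $c\in\Z_{\geq 1}$, and set $S_A(c)=\{x\in A:|\mathcal{O}_H(x)|=c\}$, so that $m_1=|S_A(c)|/c$. First I would record that orbit sizes are constant on cosets of $\Ann(A)$. Indeed, $\Ann(A)\subseteq Z(A,+)$ and every $z\in\Ann(A)$ satisfies $\lambda_b(z)=z$ for all $b$, hence is fixed pointwise by $\alpha$; thus for $z\in\Ann(A)$ and $h\in H$ one has $\alpha_h(x+z)=\alpha_h(x)+z$, so $\mathcal{O}_H(x+z)=\mathcal{O}_H(x)+z$ and $|\mathcal{O}_H(x+z)|=|\mathcal{O}_H(x)|$ (this is the same mechanism exploited in Proposition~\ref{fix:pro}). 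Consequently $S_A(c)$ is a union of cosets of $\Ann(A)$; writing $\pi\colon A\to A/\Ann(A)$ and $\bar S_A(c)=\pi(S_A(c))$, we obtain $|S_A(c)|=|\Ann(A)|\,|\bar S_A(c)|$, and the orbit-size function descends to a well-defined map on $A/\Ann(A)$.

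The heart of the argument is that this descended orbit-size function is transported by the isoclinism isomorphism $\xi\colon A/\Ann(A)\to B/\Ann(B)$. The key observation is that the displacement set $D_x=\mathcal{O}_H(x)-x$ satisfies $|D_x|=|\mathcal{O}_H(x)|$ and lies in $A'$; concretely, a direct computation gives $\alpha_{(\bar a,\bar b)}(x)-x=[a,\lambda_b(x)]_+ + b*x$, so every displacement is a value of the maps $\phi_+^A$ and $\phi_*^A$ of Definition~\ref{defn:isoclinism} evaluated on the classes $\bar a,\bar b,\bar x$. Because $K$ is the subgroup of $(B/\Ann(B),+)\rtimes_{\overline{\rho}}(B/\Ann(B),\circ)$ corresponding to $H$ under $\xi$, and because the diagram of Definition~\ref{defn:isoclinism} commutes, the isomorphism $\delta\colon A'\to B'$ carries $D_x$ bijectively onto the displacement set $D_{x^*}=\mathcal{O}_K(x^*)-x^*$ for any lift $x^*\in B$ of $\xi(\bar x)$. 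Hence $|\mathcal{O}_H(x)|=|D_x|=|\delta(D_x)|=|D_{x^*}|=|\mathcal{O}_K(x^*)|$; that is, corresponding cosets carry orbits of the same size. Therefore $\xi$ restricts to a bijection $\bar S_A(c)\to\bar S_B(c)$, whence $|\bar S_A(c)|=|\bar S_B(c)|$.

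Combining the two steps finishes the proof. From the first paragraph, $m_1=|\Ann(A)|\,|\bar S_A(c)|/c$ and likewise $m_2=|\Ann(B)|\,|\bar S_B(c)|/c$; dividing and using $|\bar S_A(c)|=|\bar S_B(c)|$ yields $m_1/m_2=|\Ann(A)|/|\Ann(B)|$. Since $\xi$ is an isomorphism $A/\Ann(A)\to B/\Ann(B)$, we have $|A|/|\Ann(A)|=|B|/|\Ann(B)|$, so $|\Ann(A)|/|\Ann(B)|=|A|/|B|$ and the claimed identity $m_1=m_2\,|A|/|B|$ follows. I expect the main obstacle to be the second paragraph: verifying that the displacement $\alpha_{(\bar a,\bar b)}(x)-x$ is genuinely a word in $*$ and $[\,\cdot,\cdot]_+$ depending only on the classes modulo $\Ann$, and then tracking the correspondence $H\leftrightarrow K$ through the commuting diagram so that $\delta$ truly identifies the two displacement sets; the additive-commutator term $[a,\lambda_b(x)]_+$ in particular requires controlling how $A'$ reduces modulo $\Ann(A)$, which is precisely the compatibility the isoclinism diagram encodes.
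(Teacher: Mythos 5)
The paper itself contains no proof of this statement: it is imported verbatim from \cite[Theorem 3.19]{LeVe}, so there is no internal argument to compare against, and your proposal has to stand on its own. In essence it does: the three-step structure (orbit sizes are constant on $\Ann(A)$-cosets because $\Ann(A)$ is pointwise fixed and additively central; the displacement set $D_x=\mathcal{O}_H(x)-x\subseteq A'$ is transported by $\delta$ onto $D_{x^*}$; then the count $m_1=|\Ann(A)|\,|\bar S_A(c)|/c$ together with $|A|/|\Ann(A)|=|B|/|\Ann(B)|$) is a correct and natural route, and it is precisely the skew brace analogue of the classical argument that isoclinic groups have proportional conjugacy class sizes.

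Two points need attention before this is a complete proof. First, the step you flag as the main obstacle does close, but not quite for the reason you give. The commuting diagram only yields $\delta([a,\lambda_b(x)]_+)=\phi_+^B\bigl(\xi(\bar a),\xi(\overline{\lambda_b(x)})\bigr)$, so what you actually need is $\xi(\overline{\lambda_b(x)})=\overline{\lambda_{b^*}(x^*)}$. This is not ``encoded in the diagram''; it follows from the fact that $\xi$ is an isomorphism of \emph{skew braces}, not merely of groups: the projection $A\to A/\Ann(A)$ is a skew brace homomorphism, so $\overline{\lambda_b(x)}$ is the $\lambda$-map of the quotient brace evaluated at $(\bar b,\bar x)$, and $\xi$ intertwines the $\lambda$-maps of the two quotient braces; finally $\phi_+^B$ does not depend on the choice of lift since $\Ann(B)\subseteq Z(B,+)$. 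With this in place, $\delta(D_x)=D_{x^*}$ holds because $K=(\xi\times\xi)(H)$, and your conclusion $|\mathcal{O}_H(x)|=|\mathcal{O}_K(x^*)|$ follows. Second, two small blemishes: with the paper's parametrization $\alpha_{(\bar a,\bar b)}(c)=a+\overline{\rho}_{\bar b}(\bar c)-a$ the displacement is $[a+b,\lambda_b(x)]_++b*x$ rather than $[a,\lambda_b(x)]_++b*x$ (harmless, since $\xi(\overline{a+b})=\overline{a^*+b^*}$ and the same transport applies); and your last step divides by $m_2$, which may be zero; instead conclude directly $m_1=|\Ann(A)|\,|\bar S_A(c)|/c=|\Ann(A)|\,|\bar S_B(c)|/c=\bigl(|\Ann(A)|/|\Ann(B)|\bigr)m_2=\bigl(|A|/|B|\bigr)m_2$.
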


\begin{cor}
\label{cor:isoclinic orbits}
    Let $A$ and $B$ be finite isoclinic skew braces. 
Then 
\[
|B|\ell_A(m)=\ell_B(m)|A| \quad\text{ and }\quad|B|t_A(m)=t_B(m)|A|.
\]
\end{cor}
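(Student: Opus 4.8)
The plan is to derive both equalities directly from Theorem~\ref{thm:isoclinc orbits} by specializing the subgroups $H$ and $K$ to two convenient extreme cases, and then to relate the resulting $H$-orbit counts to the genuine $\lambda$- and $\theta$-orbit counts $\ell_A(m)$ and $t_A(m)$. First I would treat the $\theta$-statement, since the $\theta$-action is precisely the action $\alpha_A$ of the full semidirect product on $(A,+)$, modded out by the annihilator. Concretely, I would take $H=(A/\Ann(A),+)\rtimes_{\overline{\rho}}(A/\Ann(A),\circ)$ to be the entire group, so that $K$ is correspondingly the entire group $(B/\Ann(B),+)\rtimes_{\overline{\rho}}(B/\Ann(B),\circ)$. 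The key point to verify is that the $H$-orbit of an element $c\in A$ under $\alpha_A$ coincides with its genuine $\theta$-orbit $\Theta_A(c)$; this should follow because $\alpha_{(\bar a,\bar b)}(c)=a+\overline{\rho}_{\bar b}(\bar c)-a$ unwinds, using the definition of $\overline{\rho}$ and of $\lambda$, to $a+\lambda_b(c)-a=\theta_{(a,b)}(c)$, and the ambiguity introduced by passing to cosets modulo $\Ann(A)$ does not change the orbit because $\Ann(A)\subseteq \Fix_\theta(A)$ acts trivially. Once this identification is in place, the number of full-group orbits of size $m$ is exactly $t_A(m)$, and Theorem~\ref{thm:isoclinc orbits} with $c=m$ yields $t_A(m)=t_B(m)\,|A|/|B|$, which rearranges to $|B|\,t_A(m)=t_B(m)\,|A|$.

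For the $\lambda$-statement I would instead choose $H$ to be the subgroup $\{\bar 0\}\rtimes_{\overline{\rho}}(A/\Ann(A),\circ)$, i.e. the image of the $\circ$-factor alone, with $K$ the corresponding subgroup on the $B$ side. Here the $H$-orbit of $c$ under $\alpha_A$ reduces to $\{\overline{\rho}_{\bar b}(\bar c):\bar b\}=\{b\circ c-b\}=\{\lambda_b(c)\}=\Lambda_A(c)$, again invoking $\Ann(A)\subseteq\Soc(A)\subseteq\ker\lambda$ so that the choice of coset representative is immaterial. Thus the $H$-orbits of size $m$ number exactly $\ell_A(m)$, and since isoclinism carries the $\circ$-factor subgroup to the $\circ$-factor subgroup, $K$ is the analogous subgroup for $B$; Theorem~\ref{thm:isoclinc orbits} then gives $\ell_A(m)=\ell_B(m)\,|A|/|B|$, i.e. $|B|\,\ell_A(m)=\ell_B(m)\,|A|$.

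I expect the main obstacle to be the bookkeeping around the annihilator: one must check carefully that restricting to $H$-orbits (which are orbits of cosets-acting on genuine elements of $A$) really does recover the $\lambda$- and $\theta$-orbits, and in particular that no orbits are accidentally merged or split when passing to $A/\Ann(A)$ in the acting group. The cleanest way to handle this is to observe that $\alpha_A$ factors the original $\theta$- (resp.\ $\lambda$-) action through the quotient by $\Ann(A)$, and that $\Ann(A)$ acts trivially on $(A,+)$ under both actions; hence the orbits are literally unchanged. A secondary point requiring a sentence of justification is that the correspondence of subgroups supplied by the isoclinism respects this decomposition, i.e. that the distinguished subgroups (the whole group, and the $\circ$-factor) correspond to their namesakes under $\xi$; this is immediate from the fact that $\xi$ is built from the isomorphism $A/\Ann(A)\to B/\Ann(B)$ and thus preserves the semidirect-product structure. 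With these two observations the corollary is just two applications of the theorem.
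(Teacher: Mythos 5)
Your $\theta$-half is essentially the paper's own proof: taking $H$ and $K$ to be the whole semidirect products is exactly what the paper does. One small imprecision there: by the paper's definition $\overline{\rho}_{\bar b}(\bar c)=b\circ c-b=b+\lambda_b(c)-b$, so $\alpha_{(\bar a,\bar b)}(c)=a+b+\lambda_b(c)-b-a=\theta_{(a+b,b)}(c)$, not $\theta_{(a,b)}(c)$; but since $(a,b)\mapsto(a+b,b)$ is a bijection of $A\times A$, the full-group orbits are still exactly the $\theta$-orbits, so that half goes through.

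The $\lambda$-half has a genuine gap, and it stems from the same conflation: you identify $\overline{\rho}_{\bar b}(\bar c)$ with $\lambda_b(c)$, but $\overline{\rho}$ is \emph{not} induced by $\lambda$. The orbit of $c$ under your subgroup $\{\bar 0\}\rtimes_{\overline{\rho}}(A/\Ann(A),\circ)$ is $\{b\circ c-b\colon b\in A\}=\{b+\lambda_b(c)-b\colon b\in A\}$, which differs from $\Lambda_A(c)=\{-b+b\circ c\colon b\in A\}$ whenever the additive conjugation fails to cancel. Concretely, take $A=\op\Triv(\S_3)$, so $a\circ b=ab$ and $a+b=ba$; here $\Ann(A)=\{0\}$, and $b\circ c-b=b^{-1}(bc)=c$ for all $b,c$, so every orbit under your subgroup is a singleton, whereas $\lambda_b(c)=bcb^{-1}$, so the $\lambda$-orbits are the conjugacy classes of $\S_3$ and $\ell_A(2)=\ell_A(3)=1$. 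Thus your choice of $H$ does not compute $\ell_A(m)$, and the identity $|B|\ell_A(m)=\ell_B(m)|A|$ does not follow from it. The paper avoids this by taking the twisted diagonal $H=\{(-\bar a,\bar a)\colon a\in A\}$ and $K=\{(-\bar b,\bar b)\colon b\in B\}$: then $\alpha_{(-\bar a,\bar a)}(c)=-a+(a\circ c-a)+a=-a+a\circ c=\lambda_a(c)$, so the conjugation is absorbed, the $H$-orbits are exactly the $\lambda$-orbits, and Theorem~\ref{thm:isoclinc orbits} yields the claim. Your argument is repaired by substituting this subgroup for yours; the remaining points you raise (well-definedness modulo $\Ann(A)$, and that the isoclinism matches $H$ with $K$) are fine.
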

\begin{proof}
    The proof follows from Theorem~\ref{thm:isoclinc orbits} first for
    $H$ and $K$ to be the whole groups
    and then for $H=\{(-\bar{a},\bar{a})\colon a\in A\}$ and $K=\{(-\bar{b},\bar{b})\colon b\in B\}$.
\end{proof}

\begin{pro}
\label{pro:isoclinic}
    Let $A$ and $B$ finite skew braces of the same size. 
    If they are isoclinic,
    then $\Lambda(A)\cong\Lambda(B)$ and $\Theta(A)\cong\Theta(B)$.
\end{pro}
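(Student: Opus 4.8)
The plan is to reduce the entire statement to Corollary~\ref{cor:isoclinic orbits} together with the elementary observation that the isomorphism type of a common divisor graph is completely determined by the multiset of sizes of its non-trivial orbits.

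First I would invoke Corollary~\ref{cor:isoclinic orbits}: since $A$ and $B$ are finite isoclinic skew braces, we have $|B|\,\ell_A(m)=\ell_B(m)\,|A|$ and $|B|\,t_A(m)=t_B(m)\,|A|$ for every $m$. Feeding in the hypothesis $|A|=|B|$, these collapse to
\[
\ell_A(m)=\ell_B(m)\qquad\text{and}\qquad t_A(m)=t_B(m)\qquad\text{for all }m\in\Z_{\geq 0}.
\]
Restricting to $m>1$, this says that the multiset of sizes of the non-trivial $\lambda$-orbits of $A$ coincides with that of $B$, and likewise for the non-trivial $\theta$-orbits.

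Next I would record the structural point about the graphs. By Definition~\ref{defn:common divisor graph}, the vertex set of $\Lambda(A)=\mathcal{C}(\lambda)$ is exactly the set of non-trivial $\lambda$-orbits, and two distinct vertices $L_1,L_2$ are adjacent precisely when $\gcd(|L_1|,|L_2|)\neq 1$. Hence adjacency depends only on the sizes of the orbits, so the isomorphism type of $\Lambda(A)$ is determined by the multiset $\{\,|L| : L\text{ a non-trivial }\lambda\text{-orbit of }A\,\}$. The same remark applies verbatim to $\Theta(A)=\mathcal{C}(\theta)$.

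Combining the two, since $\ell_A(m)=\ell_B(m)$ for all $m>1$, the two size-multisets agree, so there is a size-preserving bijection between the non-trivial $\lambda$-orbits of $A$ and those of $B$; by the previous paragraph such a bijection sends edges to edges and non-edges to non-edges, hence is a graph isomorphism $\Lambda(A)\cong\Lambda(B)$. Running the identical argument with $t_A(m)=t_B(m)$ gives $\Theta(A)\cong\Theta(B)$. I do not expect any genuine obstacle here: all the substance is packaged in Corollary~\ref{cor:isoclinic orbits}, and the only thing left to verify is the routine fact that these graphs are invariants of their orbit-size multisets.
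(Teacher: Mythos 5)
Your proof is correct and follows exactly the paper's route: the paper's own proof simply invokes Corollary~\ref{cor:isoclinic orbits} with $|A|=|B|$ to conclude that the two skew braces have the same number of $\lambda$-orbits (resp.\ $\theta$-orbits) of each size, leaving implicit the observation you spell out, namely that adjacency in a common divisor graph depends only on orbit sizes, so the size multiset determines the graph up to isomorphism.
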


\begin{proof}
    By Corollary~\ref{cor:isoclinic orbits},
    since $A$ and $B$ are of the same size,
    they have the same number of $\lambda$-orbits 
    of size $m$ and the same number of $\theta$-orbits 
    of size $m$ for every $m\in\Z_{\geq1}$.
\end{proof}

The converse of Proposition~\ref{pro:isoclinic}
is not true in general, as we can find finite skew braces
of the same size and with the same graphs that are not isoclinic.
\begin{exa}
    Let $A$ be a finite abelian group and $G$ a non-abelian group of the same size as $A$.
    Then $\Lambda(\Triv(A))$ and $\Lambda(\Triv(G))$ have no vertices, 
    but $\Triv(A)$ and $\Triv(G)$ are not isoclinic, since their additive groups are not.
\end{exa}

The following example shows 
that the hypothesis on the size 
in Proposition~\ref{pro:isoclinic} is necessary.

\begin{exa}
    Let $A$ be the skew brace on $\Z/4\Z$ with $x\circ y=x+y+2xy$
    and $B$ be the skew brace on $\Z/8\Z$ with $x\circ y=x+5^xy$.
    Then $A$ and $B$ are isoclinic skew braces of abelian type.
    But $\Theta(A)=\Lambda(A)$ is the graph with one vertex,
    while $\Theta(B)=\Lambda(B)$ is the complete graph with two vertices.
\end{exa}

\begin{exa}
    Let $A$ be the skew brace on $\Z/9\Z$ with $x\circ y=x+y+3xy$
    and $B$ be the skew brace on $\Z/3\Z\times\Z/2\Z$
    with
    $$(n,m)\circ (s,t)=\left(n+(-1)^ms,m+t\right).$$
    Then $A$ and $B$ are not isoclinic but
    $\Theta(A)=\Lambda(A)\cong\Lambda(B)=\Theta(B)$
    is the complete graph with two vertices.
\end{exa}

However, even without assumption on the sizes,
some graph-theoretic properties are preserved under isoclinism.

\begin{pro}
Let $A$ and $B$ be isoclinic finite skew braces.
Then $\Lambda(A)$ is complete if and only if $\Lambda(B)$ is complete. Furthermore, the number of connected components of both graphs coincide.
The same holds for the graphs $\Theta(A)$ and $\Theta(B)$.
\end{pro}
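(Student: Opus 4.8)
The plan is to reduce the entire statement to the \emph{set of distinct sizes} of the non-trivial orbits, which is preserved under isoclinism by Corollary~\ref{cor:isoclinic orbits}. The key observation is that whether a common divisor graph $\mathcal{C}(\alpha)$ is complete, and how many connected components it has, depend only on which integers $m>1$ arise as orbit sizes, and \emph{not} on their multiplicities.

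First I would extract from Corollary~\ref{cor:isoclinic orbits} the relation $\ell_A(m)=(|A|/|B|)\,\ell_B(m)$ for every $m\in\Z_{\geq 1}$. Since $|A|/|B|$ is a fixed positive rational while $\ell_A(m)$ and $\ell_B(m)$ are nonnegative integers, this yields $\ell_A(m)>0 \iff \ell_B(m)>0$. Consequently $A$ and $B$ realise exactly the same set $S=\{m>1:\ell(m)>0\}$ of non-trivial $\lambda$-orbit sizes; the identical argument applied to $t_A(m)=(|A|/|B|)\,t_B(m)$ handles the $\theta$-orbit sizes.

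Next I would record the elementary structural fact about an arbitrary common divisor graph. Grouping its vertices by size, two orbits of a common size $m>1$ are always adjacent (as $\gcd(m,m)=m>1$), so they lie in a single clique, while two orbits of distinct sizes $m_1\neq m_2$ are adjacent precisely when $\gcd(m_1,m_2)>1$. Hence $\mathcal{C}(\alpha)$ is complete if and only if every pair of distinct elements of $S$ shares a common prime factor, a condition depending on $S$ alone. For the connected components, I would set up a bijection between the components of $\mathcal{C}(\alpha)$ and the components of the auxiliary graph on vertex set $S$ with an edge between $m_1,m_2$ whenever $\gcd(m_1,m_2)>1$: any path in $\mathcal{C}(\alpha)$ projects to a walk between the corresponding sizes, and conversely any path between sizes lifts to a path between chosen orbit representatives, which exist because each size in $S$ is realised. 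Thus the number of components of $\mathcal{C}(\alpha)$ is likewise a function of $S$ alone.

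Combining these, since $S_A=S_B$ for the $\lambda$-sizes (respectively for the $\theta$-sizes), both the completeness of the graph and its number of connected components agree for $A$ and $B$, which is the claim. The step demanding the most care is the component count, where one must verify that multiplicities are irrelevant: the lifting and projection argument shows that collapsing all orbits of a common size to a single vertex neither merges nor splits components, so the passage from $\mathcal{C}(\alpha)$ to the size graph is harmless. The degenerate cases $|S|\leq 1$ are covered automatically, since an empty set or a singleton is vacuously complete and has the expected number of components.
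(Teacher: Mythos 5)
Your proof is correct and takes essentially the same approach as the paper: both rest on Corollary~\ref{cor:isoclinic orbits} to conclude that $A$ and $B$ realise exactly the same set of non-trivial orbit sizes, and then observe that adjacency in a common divisor graph depends only on those sizes. Your auxiliary graph on the set of realised sizes simply spells out, via the projection/lifting of paths, the component-count step that the paper leaves implicit after stating the adjacency correspondence.
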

\begin{proof}
    By Corollary~\ref{cor:isoclinic orbits}
    $\ell_A(m)\ne 0$ if and only if $\ell_A(m)\ne 0$,
    for every $m\in \Z_{\geq 0}$.
    Let $L_1$ and $L_2$
    be two vertices of $\Lambda(B)$.    
    Then there are two vertices $M_1$ and $M_2$ of $\Lambda(A)$
    such that $|M_1|=|L_1|$ and $|M_2|=|L_2|$.
    In particular, $L_1$ and $L_2$ are adjacent in $\Lambda(B)$ if and only if
    $M_1$ and $M_2$ are adjacent in $A$.
     With the same proof, we obtain the same result for $\Theta(A)$ and $\Theta(B)$.
    Hence the results follow.
\end{proof}

\section{Graphs of skew braces of small order}
\label{examples}
In this section, we characterize the common divisor graph for skew braces of order a product of two primes. 
\begin{pro}
Let $p$ be a prime number, $n\in\Z_{\geq 0}$, 
and $A$ be a skew brace of size $p^n$. 
Then $\Lambda(A)$ and $\Theta(A)$ are complete graphs on a number of vertices that is a multiple of $p-1$.
\end{pro}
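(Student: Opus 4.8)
The plan is to reduce both assertions to the single observation that all the groups involved are $p$-groups, and then to quote Proposition~\ref{pro:p-groups orbits}.

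First I would record the two actions explicitly. For the $\lambda$-action the acting group is $G=(A,\circ)$ and the group acted upon is $H=(A,+)$, both of order $p^n$; for the $\theta$-action the acting group is $G=(A,+)\rtimes_\lambda(A,\circ)$, of order $p^{2n}$, acting on $H=(A,+)$. In either case $G$ is a $p$-group, so by the Orbit--Stabilizer theorem every orbit size divides $|G|$ and is therefore a power of $p$. Hence each \emph{non-trivial} orbit has size $p^m$ with $m\geq 1$, and in particular is divisible by $p$.

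Completeness then follows at once: given two vertices $L_1,L_2$ of $\Lambda(A)$ (or of $\Theta(A)$), both $|L_1|$ and $|L_2|$ are divisible by $p$, so $\gcd(|L_1|,|L_2|)\geq p>1$, and thus $L_1,L_2$ are adjacent by Definition~\ref{defn:common divisor graph}. Therefore each graph is complete. For the count, I would write the number of vertices of $\Lambda(A)$ as $\sum_{m\geq 1}\ell_A(p^m)$ and that of $\Theta(A)$ as $\sum_{m\geq 1}t_A(p^m)$. By Proposition~\ref{pro:p-groups orbits}, applied to the $\lambda$-action and the $\theta$-action respectively, $p-1$ divides every term $\ell_A(p^m)$ and every term $t_A(p^m)$; summing over $m\geq 1$ shows that $p-1$ divides each total.

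Since every step is a direct application of an already established result, there is no genuine obstacle here. The only point deserving a moment of care is the bookkeeping that the vertices are exactly the orbits of size $p^m$ with $m\geq 1$, so that the fixed-point counts $\ell_A(1)=|\Fix(A)|$ and $t_A(1)=|\Fix_\theta(A)|$ are correctly omitted from the vertex totals; note also that the empty case (where $A$ is a trivial skew brace of abelian type) is covered automatically, since the empty sum $0$ is a multiple of $p-1$ and the empty graph is vacuously complete.
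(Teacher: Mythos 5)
Your proof is correct and follows essentially the same route as the paper: orbit sizes are powers of $p$, hence any two vertices share the factor $p$ and the graphs are complete, and Proposition~\ref{pro:p-groups orbits} gives that $p-1$ divides each $\ell_A(p^m)$ and $t_A(p^m)$, hence their sums over $m\geq 1$. Your explicit bookkeeping that the vertex count is $\sum_{m\geq 1}\ell_A(p^m)$ (excluding the fixed-point count $\ell_A(1)$) is in fact slightly more careful than the paper's wording, since the divisibility by $p-1$ genuinely only holds for the non-trivial orbit counts.
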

\begin{proof}
    Firstly the cardinality of every non-trivial $\lambda$- or $\theta$-orbit divides $|A|=p^n$.
    So $\Lambda(A)$ and $\Theta(A)$ are complete graphs.
    Moreover, by Proposition~\ref{pro:p-groups orbits},
    $p-1$ divides $\ell(p^m)$ and $t(p^m)$ for all $m\in\Z_{\geq 0}$.
    So the total number of $\lambda$- and $\theta$-orbits is also divisible by $p-1$.
\end{proof}

Skew braces of size $p^2$, where $p$ is a prime number, 
were classified by Bachiller in~\cite[Proposition 2.4]{MR3320237}.

\begin{thm}[Bachiller]
\label{thm:p2}
 A complete list of the isomorphism classes of 
   skew braces of order $p^2$ is the following.
     \begin{enumerate}
        \item The trivial skew brace on $\Z/{p^2}\Z$.
        \item The trivial skew brace on $\Z/p\Z\times\Z/p\Z$.
        \item The skew brace on $\Z/{p^2}\Z$ where $x\circ y=x+y+pxy$.
        \item The skew brace on $\Z/p\Z\times\Z/p\Z$ where 
        \[
        (x_1,y_1)\circ(x_2,y_2)=(x_1+x_2+y_1y_2, y_1+y_2).
        \]
    \end{enumerate}
\end{thm}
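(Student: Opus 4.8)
The plan is to classify the possible multiplications $\circ$ by analysing the homomorphism $\lambda\colon(A,\circ)\to\Aut(A,+)$, $a\mapsto\lambda_a$, together with the reconstruction $a\circ b=a+\lambda_a(b)$. Since every group of order $p^2$ is abelian, both $(A,+)$ and $(A,\circ)$ are isomorphic to $\Z/p^2\Z$ or to $\Z/p\Z\times\Z/p\Z$. The image $\lambda(A)$ is a quotient of the $p$-group $(A,\circ)$, hence a $p$-subgroup of $\Aut(A,+)$. As $|\Aut(\Z/p^2\Z)|=p(p-1)$ and $|\Aut(\Z/p\Z\times\Z/p\Z)|=|GL_2(\mathbb{F}_p)|=p(p-1)^2(p+1)$, in either case a Sylow $p$-subgroup has order $p$, so $|\lambda(A)|\in\{1,p\}$. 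If $|\lambda(A)|=1$ then $a\circ b=a+\lambda_a(b)=a+b$ for all $a,b$, so $A$ is the trivial skew brace, yielding families (1) and (2) according to the additive group.

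Next I would treat the case $|\lambda(A)|=p$, where $\Soc(A)=\ker\lambda\cap Z(A,+)=\ker\lambda$ is an ideal of order $p$, and normalise the action. When $(A,+)=\Z/p^2\Z$, the image of $\lambda$ must be the unique subgroup $\langle 1+p\rangle$ of order $p$ in $(\Z/p^2\Z)^\times$, so $\lambda_a$ is multiplication by $1+p\,v(a)$ for some map $v\colon A\to\Z/p\Z$, giving $a\circ b=a+b+p\,v(a)b$. Reducing this relation modulo $p$ shows that $a\mapsto a\bmod p$ is simultaneously a homomorphism for $+$ and for $\circ$, with kernel $\Soc(A)$; since $v$ is a homomorphism $(A,\circ)\to\Z/p\Z$ with this same kernel, $v(a)=c\,(a\bmod p)$ for some unit $c$, and hence $a\circ b=a+b+pc\,ab$. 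When $(A,+)=\Z/p\Z\times\Z/p\Z$, all order-$p$ subgroups of $GL_2(\mathbb{F}_p)$ are Sylow, hence conjugate, so after applying a suitable additive automorphism I may assume $\lambda(A)$ lies in the unipotent subgroup; then $\lambda_a(b)=(b_1+f(a)b_2,b_2)$ and $a\circ b=(a_1+b_1+f(a)b_2,\,a_2+b_2)$ for a map $f\colon A\to\Z/p\Z$. The ideal-invariance $\lambda_a(\Soc(A))\subseteq\Soc(A)$ forces the order-$p$ subgroup $\Soc(A)$ to equal $\Z/p\Z\times\{0\}$, so $f$ and the second-coordinate projection share a kernel and are proportional, giving $f(a)=c\,a_2$ and $a\circ b=(a_1+b_1+c\,a_2b_2,\,a_2+b_2)$.

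In both subcases a final rescaling by an additive automorphism (multiplication by a unit, respectively a diagonal automorphism) absorbs the unit $c$: a direct check shows this map is a skew brace isomorphism onto the structure with $c=1$, producing exactly family (3) and family (4). To finish I would record that the four structures are pairwise non-isomorphic, since families (1) and (3) have additive group $\Z/p^2\Z$ while (2) and (4) have additive group $\Z/p\Z\times\Z/p\Z$, and within each additive type one brace is trivial while the other has $\lambda\neq\id$. I expect the only delicate point to be the normalisation when $(A,+)=\Z/p\Z\times\Z/p\Z$: conjugating $\lambda(A)$ into the unipotent subgroup and then using ideal-invariance to pin down $\Soc(A)$ is precisely what collapses the a priori two-parameter freedom in $f$ to a single scalar, after which the rescaling step is routine.
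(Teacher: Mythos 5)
Your proposal is correct, but it cannot be compared to a proof in the paper because the paper gives none: Theorem~\ref{thm:p2} is quoted from Bachiller (\cite[Proposition 2.4]{MR3320237}) and used as a black box. Your argument is a genuine self-contained alternative, and it is sound. The key steps all check out: since $p$ exactly divides both $|\Aut(\Z/p^2\Z)|=p(p-1)$ and $|GL_2(\mathbb{F}_p)|=p(p-1)^2(p+1)$, indeed $|\lambda(A)|\in\{1,p\}$; in the cyclic case $\Soc(A)=\ker\lambda$ is an additive subgroup of order $p$, hence the unique one $p\Z/p^2\Z$, which is what makes $v$ and reduction mod $p$ share a kernel and hence be proportional; in the elementary abelian case, subgroups of order $p$ in $GL_2(\mathbb{F}_p)$ are Sylow, conjugation by an additive automorphism does transport one skew brace to an isomorphic one with $\lambda'_x=\phi\lambda_{\phi^{-1}(x)}\phi^{-1}$, and the unique line invariant under the full unipotent group pins down $\Soc(A)$ exactly as you say; the rescalings (a unit $u\equiv c\pmod p$, respectively the diagonal map $(x_1,x_2)\mapsto(c^{-1}x_1,x_2)$) do absorb the constant. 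Two small points you should make explicit for completeness: (i) the list asserts that items (3) and (4) \emph{are} skew braces, so one should either verify the axioms directly (a routine computation) or observe that your normal forms are transports of actual skew braces together with an existence statement; (ii) in the cyclic case, the fact that $v$ is a homomorphism $(A,\circ)\to\Z/p\Z$ follows from $(1+pv(a))(1+pv(b))\equiv 1+p(v(a)+v(b))\pmod{p^2}$, which is worth one line. Compared with citing Bachiller, your route via the Sylow structure of $\lambda(A)\leq\Aut(A,+)$ and common-kernel arguments is elementary, keeps the whole classification inside the $\lambda$-formalism the paper already uses, and makes visible why exactly one non-trivial structure exists on each additive group.
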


\begin{pro}
\label{p2graph:pro}
Let $p$ be a prime number and $A$ be a skew brace of order $p^2$. 
Then $\Lambda(A)=\Theta(A)$. If $A$ is non-trivial, then $\Lambda(A)=\Theta(A)$ is the complete graph with $p-1$ vertices.
\end{pro}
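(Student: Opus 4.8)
The plan is to verify the claim directly on each of the four skew braces in Bachiller's classification (Theorem~\ref{thm:p2}), since there are only finitely many isomorphism types. For the two trivial skew braces (items (1) and (2)), the map $\lambda$ is trivial, so every $\lambda$-orbit is a singleton; moreover, as these are of abelian type, the $\theta$-action reduces to the $\lambda$-action and $\Theta(A)=\Lambda(A)$ has no vertices at all. These cases are vacuous and consistent with the statement, which only asserts the complete graph on $p-1$ vertices in the non-trivial case.

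For the two non-trivial skew braces (items (3) and (4)), I would first record that both are of abelian type, since their additive groups are $\Z/p^2\Z$ and $\Z/p\Z\times\Z/p\Z$. Hence $\theta_{(a,b)}(c)=a+\lambda_b(c)-a=\lambda_b(c)$, which immediately gives $\Theta(A)=\Lambda(A)$ and settles the first assertion for every skew brace of order $p^2$ once the trivial cases are disposed of. For the second assertion, I would compute the $\lambda$-action explicitly. In case (3), a direct calculation from $\lambda_x(y)=-x+x\circ y=y+pxy$ shows that $\lambda_x$ is non-trivial precisely when $p\nmid x$, and that the orbit of any generator $y$ has size $p$. In case (4), writing $\lambda_{(x_1,y_1)}(x_2,y_2)=(x_2+y_1y_2,y_2)$, one sees the orbit of $(x_2,y_2)$ has size $p$ exactly when $p\nmid y_2$ and is trivial otherwise.

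From these computations the conclusion follows: every non-trivial $\lambda$-orbit has size exactly $p$, so any two vertices of $\Lambda(A)$ satisfy $\gcd(p,p)=p\neq 1$ and are therefore adjacent, making $\Lambda(A)$ complete. To count the vertices, I would invoke Proposition~\ref{pro:p-groups orbits}, which guarantees that the number $\ell(p)$ of orbits of size $p$ is divisible by $p-1$; combined with the explicit count from the orbit sizes (the non-fixed elements number $p^2-p=p(p-1)$, partitioned into orbits of size $p$, giving exactly $p-1$ orbits), this yields precisely $p-1$ vertices. The main obstacle, though a mild one, is being careful in case (4) that the $\lambda$-action is correctly identified and that the fixed set is exactly the subgroup where $p\mid y_2$, so that the orbit count comes out to $p-1$ rather than a larger multiple; the divisibility from Proposition~\ref{pro:p-groups orbits} serves as a useful consistency check on this count.
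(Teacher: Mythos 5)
Your proof is correct, but it takes a genuinely different route from the paper's. You verify the statement case by case on Bachiller's classification (Theorem~\ref{thm:p2}), computing the $\lambda$-action explicitly in each of the four isomorphism types; your formulas $\lambda_x(y)=y+pxy$ in case (3) and $\lambda_{(x_1,y_1)}(x_2,y_2)=(x_2+y_1y_2,y_2)$ in case (4) are right, and the orbit counts come out as claimed. The paper instead argues abstractly, using no classification at all: every group of order $p^2$ is abelian, so $A$ is of abelian type and $\Lambda(A)=\Theta(A)$; the class equation for the $\lambda$-action shows that $p$ divides $\left|\Fix(A)\right|$ (every non-trivial orbit size divides $p^2$ and exceeds $1$, hence is divisible by $p$), so $\left|\Fix(A)\right|\neq 1$, and non-triviality of $A$ forces $\left|\Fix(A)\right|=p$; since only $p^2-p<p^2$ elements lie outside $\Fix(A)$, no orbit can have size $p^2$, so every non-trivial orbit has size exactly $p$, and the count $(p^2-p)/p=p-1$ finishes the argument. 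Your approach buys explicit descriptions of $\Fix(A)$ and of the orbits in each isomorphism type, which is concrete and useful data, but it depends on the correctness of the classification theorem and needs four separate computations; the paper's counting argument is shorter, uniform over all skew braces of order $p^2$, and is the same class-equation technique the paper reuses elsewhere (for instance in Theorem~\ref{thm:graph pq}). Note also that, just as you observed for your own write-up, Proposition~\ref{pro:p-groups orbits} is essentially a consistency check in both arguments: the step that actually pins the number of vertices at $p-1$ is the direct count of the $p^2-p$ non-fixed elements partitioned into orbits of size $p$.
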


\begin{proof}
Since $|A|=p^2$, it is of abelian type, so $\Lambda(A)=\Theta(A)$.
Moreover, the class equation for the $\lambda$-action implies that 
$\left|\Fix(A)\right|\ne1$. Hence $\left|\Fix(A)\right|=p$, as $A$ is non-trivial. 
Moreover, $\left|\Lambda(a)\right|=p$ for all $a\in A\setminus \Fix(A)$. 
By Proposition~\ref{pro:p-groups orbits}, the number of non-trivial $\lambda$-orbits of $A$ is $p-1$
and they all have size $p$.
\end{proof}

Skew braces of size $pq$, where $p$ and $q$ are different prime
numbers were classified 
by Acri and Bonatto in~\cite{MR4085764}. 

\begin{thm}
\label{thm:graph pq}
Let $p>q$ be two prime numbers such that $p\equiv1\pmod q$,
let $H$ be a group of order $pq$ and $G$ be a group of order dividing $p^2q^2$
that acts on $H$ by $\alpha:G\to\Aut(H)$.
\begin{enumerate}
\item If $\left|H^G\right|=pq$, then $\mathcal{C}(\alpha)$ has no vertices. 
\item If $\left|H^G\right|=p$, then $\mathcal{C}(\alpha)$ is the complete graph with $q-1$ vertices.
\item If $\left|H^G\right|=q$, then $\mathcal{C}(\alpha)$ is the complete graph with $p-1$ vertices.
\item If $\left|H^G\right|=1,$ then $\mathcal{C}(\alpha)$ has two connected components:
one is the complete graph with $q-1$ vertices and
the other is the complete graph with $\frac{p-1}{q}$ vertices.
\end{enumerate}
\end{thm}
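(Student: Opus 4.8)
The plan is to replace $G$ by its image $\bar G=\alpha(G)\le\Aut(H)$ and to read off the orbit partition of $H$ from the structure of $H$ and $\Aut(H)$. Since $H$ has order $pq$ with $p>q$, its Sylow $p$-subgroup $P$ is normal, hence characteristic and $\bar G$-invariant, and $H$ is either cyclic or the non-abelian group of order $pq$ (the latter existing precisely because $p\equiv 1\pmod q$). The fixed-point set $H^G$ is a subgroup, so $|H^G|\in\{1,p,q,pq\}$, and these four values are exactly the four cases. Because automorphisms preserve orders, $\bar G$ separately permutes the $p-1$ elements of order $p$ (that is, $P\setminus\{e\}$) and the remaining non-identity elements, all of which have order divisible by $q$ and lie in the Sylow $q$-subgroups; I would compute the orbit sizes in each family.

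Two observations control these sizes. First, every orbit size divides $|\bar G|$, which is a $\{p,q\}$-number, while $q-1$ is coprime to $pq$; hence $\gcd(q-1,|\bar G|)=1$, so $\bar G$ acts trivially on any cyclic subgroup of order $q$ that it stabilises, fixing it pointwise. Second, the induced map $\bar G\to\Aut(P)\cong(\Z/p\Z)^\times$ has image of order $d_p$ dividing $q$ (for the $\lambda$- and $\theta$-actions attached to a skew brace this is automatic, since on the normal subgroup $P$ both the maps $\lambda_a$ and the conjugations act through a subgroup of $(\Z/p\Z)^\times$ of order dividing $q$); thus $d_p\in\{1,q\}$ and $P\setminus\{e\}$ breaks into $(p-1)/d_p$ orbits of size $d_p$. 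When $H$ is non-abelian I would use $\Aut(H)\cong\Hol(P)=P\rtimes\Aut(P)$: the resulting affine action of $\bar G$ on the set of $p$ Sylow $q$-subgroups has translation part either $\{0\}$ or all of $\Z/p\Z$ (being a subgroup of the prime-order translation group $\Z/p\Z$) and multiplicative part of order $d_p$. Consequently each element of order $q$ lies in an orbit of size $p$ when translations are present (the action is transitive), and otherwise its Sylow subgroup is fixed pointwise or moved within an orbit of size $d_p$.

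It remains to assemble this according to $|H^G|$. If $H$ is cyclic then its unique Sylow $q$-subgroup is stabilised and hence fixed pointwise, so $|H^G|\in\{q,pq\}$; here every non-trivial orbit has the common size $d_p$, giving the empty graph when $d_p=1$ and, when $d_p=q$, exactly $p-1$ orbits of size $q$, i.e. the complete graph on $p-1$ vertices. If $H$ is non-abelian, the elements of order $q$ generate $H$, so a trivial Sylow action would fix $H$ pointwise; for non-trivial $\bar G$ one checks that $|H^G|=p$ forces $d_p=1$ with transitive Sylow action, yielding $q-1$ orbits of size $p$ and the complete graph on $q-1$ vertices; that $|H^G|=q$ forces $d_p=q$ with no translations, again giving $p-1$ orbits of size $q$ and the complete graph on $p-1$ vertices; and that $|H^G|=1$ forces $d_p=q$ with transitive Sylow action, producing $(p-1)/q$ orbits of size $q$ and $q-1$ orbits of size $p$, which, as $\gcd(p,q)=1$, are the two complete components on $(p-1)/q$ and $q-1$ vertices. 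The main obstacle is precisely pinning the order $d_p$ of the action on $P$ to $\{1,q\}$ rather than a higher power of $q$, and controlling the affine action on the Sylow $q$-subgroups; the congruence $p\equiv 1\pmod q$ is what makes $d_p=q$ possible and $(p-1)/q$ an integer, and the Acri--Bonatto classification of skew braces of order $pq$ guarantees that each of the four values of $|H^G|$ is realised.
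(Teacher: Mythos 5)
Your route is genuinely different from the paper's. The paper never looks at the structure of $H$ or of $\Aut(H)$: it only records that every non-trivial orbit has size $p$, $q$ or $q^2$ (orbit sizes divide $|G|\mid p^2q^2$ and are smaller than $pq<p^2$), writes the class equation $pq=\left|H^G\right|+p\mathbf{c}(p)+q\mathbf{c}(q)+q^2\mathbf{c}(q^2)$, and solves it case by case using $p\equiv1\pmod q$ together with bounds on $\mathbf{c}(p)$ and on $m=\mathbf{c}(q)+q\mathbf{c}(q^2)$. Your analysis via the characteristic Sylow $p$-subgroup, the two isomorphism types of $H$, and the affine action inside $\Aut(H)\cong\Hol(\Z/p\Z)$ is heavier but, where it applies, it computes the orbits themselves rather than merely counting them.

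However, the obstacle you flag is a genuine gap, and under the stated hypotheses it cannot be closed. The order $d_p$ of the image of $\bar G$ in $\Aut(P)\cong(\Z/p\Z)^\times$ divides $\gcd(|G|,p-1)$, and since $|G|$ is only assumed to divide $p^2q^2$, this gives no more than $d_p\mid\gcd(q^2,p-1)$; when $q^2\mid p-1$ the value $d_p=q^2$ really occurs. Neither of your patches repairs this for the statement in question: the skew-brace argument (that the $\lambda$'s and the conjugations act on $P$ through subgroups of order dividing $q$) invokes structure that is not among the hypotheses, which concern an arbitrary $G$ with $|G|\mid p^2q^2$; and the Acri--Bonatto classification only says which values of $\left|H^G\right|$ are realized by skew braces, which is irrelevant to proving the four implications. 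Concretely, take $p=5$, $q=2$, $H=\Z/5\Z\times\Z/2\Z$ and $G=\Z/4\Z$ acting faithfully on the factor $\Z/5\Z$ and trivially on $\Z/2\Z$: then $|G|=4$ divides $p^2q^2$ and $\left|H^G\right|=2=q$, but there are exactly two non-trivial orbits, both of size $q^2=4$, so $\mathcal{C}(\alpha)$ is the complete graph on two vertices rather than on $p-1=4$ vertices as case (3) asserts. This also shows the difficulty is not an artifact of your method: orbits of size $q^2$ are precisely the delicate point, and the paper's own proof treats them only implicitly (in its cases (3) and (4) the class equation determines $m=\mathbf{c}(q)+q\mathbf{c}(q^2)$, and reading $m$ as a number of vertices tacitly assumes $\mathbf{c}(q^2)=0$). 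In the paper's actual applications, namely the $\lambda$- and $\theta$-actions of a skew brace of size $pq$, the image of the acting group in $\Aut(H)$ has order dividing $pq$, so no orbit of size $q^2$ exists and both the counting argument and your structural analysis (with $d_p\in\{1,q\}$) go through; but as a proof of the theorem as stated, your proposal is incomplete at exactly the point you identified.
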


\begin{proof}
Since $|G|$ divides $p^2q^2$, $|H|=pq$ and $p>q$,
the non-trivial orbits can be of size $p$, $q$ or $q^2$.
By the class equation,
\begin{equation}
\label{class equation pq:eq}
    pq=\left|H^G\right|+p\mathbf{c}(p)+q\mathbf{c}(q)+q^2\mathbf{c}(q^2).
\end{equation}
It follows that $\mathbf{c}(p)<q$, $\mathbf{c}(q)<p$, $\mathbf{c}(q^2)<\frac{p-1}{q}$ and $m=\mathbf{c}(q)+q\mathbf{c}(q^2)<p$.

If $\left|H^G\right|=pq$, then there are no non-trivial orbits and hence 
$\mathcal{C}(\alpha)$ has no vertices.

If $\left|H^G\right|=p$, Equation \eqref{class equation pq:eq} becomes $pq=p+p\mathbf{c}(p)+qm$. 
Thus $p\mid m$. Since $m<p$, then $m=0$.
and $\mathbf{c}(p)=q-1$.
Therefore $\mathcal{C}(\alpha)$ is the complete graph with $q-1$ vertices.

Similarly, if $\left|H^G\right|=q$, one proves 
that $\mathcal{C}(\alpha)$ is the complete graph
with $p-1$ vertices.
 
If $\left|H^G\right|=1$, then 
$pq=1+p\mathbf{c}(p)+qm$.
Thus $\mathbf{c}(p)\equiv p\mathbf{c}(p)\equiv -1 \pmod q$ and $m\equiv \frac{p-1}{q} \pmod p$.
Recalling that $\mathbf{c}(p)<q$ and $m<p$, it follows that $m=\frac{p-1}{q}$ and $\mathbf{c}(p)=q-1$.
Therefore $\mathcal{C}(\alpha)$ is a graph with two connected components:
one is the complete graph with $q-1$ vertices and
the other is the complete graph with $\frac{p-1}{q}$ vertices.
\end{proof}

\begin{cor}
\label{cor:lambda graph pq}
Let $p>q$ be two prime numbers such that $p\equiv1\pmod q$ 
and $A$ be a skew brace of size $pq$.
\begin{enumerate}
\item If $A$ is trivial, then $\Lambda(A)$ has no vertices. 
\item If $\left|\Fix(A)\right|=p$, then $\Lambda(A)$ is the complete graph with $q-1$ vertices.
\item If $\left|\Fix(A)\right|=q$, then $\Lambda(A)$ is the complete graph with $p-1$ vertices.
\item If $\left|\Fix(A)\right|=1,$ then $\Lambda(A)$ has two connected components:
one is the complete graph with $q-1$ vertices and
the other is the complete graph with $\frac{p-1}{q}$ vertices.
\end{enumerate}
\end{cor}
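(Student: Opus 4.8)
The plan is to derive Corollary~\ref{cor:lambda graph pq} as a direct application of Theorem~\ref{thm:graph pq} to the specific action $\lambda$. The key observation is that the $\lambda$-graph $\Lambda(A)$ is by definition $\mathcal{C}(\lambda)$, where $\lambda\colon(A,\circ)\to\Aut(A,+)$ is the action of $G=(A,\circ)$ on $H=(A,+)$. Since $|A|=pq$, both $G$ and $H$ have order $pq$, so $|G|$ certainly divides $p^2q^2$ and $H$ has order $pq$, placing us squarely in the hypotheses of Theorem~\ref{thm:graph pq}. The only point requiring identification is that the fixed-point set $H^G$ of this action is exactly $\Fix(A)$: indeed, $\Fix(A)=\{a\in A\colon \lambda_x(a)=a\text{ for all }x\in A\}$ is precisely the set of elements fixed by every element of $(A,\circ)$ acting via $\lambda$, which is the definition of $H^G$.

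First I would note that $|\Fix(A)|$ must divide $|A|=pq$, so the possible values are $1$, $q$, $p$, and $pq$. I would then dispatch the trivial case: $A$ is a trivial skew brace if and only if $\lambda$ is the trivial action, which is equivalent to $\Fix(A)=A$, i.e.\ $\left|\Fix(A)\right|=pq$. This matches case (1) of Theorem~\ref{thm:graph pq} with $\left|H^G\right|=pq$, giving a graph with no vertices and proving part (1). The remaining three cases $\left|\Fix(A)\right|\in\{p,q,1\}$ then map directly onto cases (2), (3), and (4) of Theorem~\ref{thm:graph pq} under the identification $\left|H^G\right|=\left|\Fix(A)\right|$, yielding the stated complete graphs and the two-component description respectively.

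The argument is essentially a translation, so there is no substantial obstacle; the main thing to verify carefully is the identification $H^G=\Fix(A)$ and that the trivial-skew-brace condition coincides with $\left|\Fix(A)\right|=pq$ rather than some weaker condition. One should confirm that a skew brace of order $pq$ with $\lambda$ acting trivially is genuinely trivial (not merely of trivial $\lambda$-type): this holds because $\lambda$ trivial forces $a\circ b=a+b$ for all $a,b$, so $(A,\circ)=(A,+)$ and the skew brace is trivial. With these identifications in place, the four cases follow immediately by invoking Theorem~\ref{thm:graph pq}, completing the proof.
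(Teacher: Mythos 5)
Your proposal is correct and matches the paper's intent exactly: the paper states Corollary~\ref{cor:lambda graph pq} without proof, treating it as an immediate specialization of Theorem~\ref{thm:graph pq} to $G=(A,\circ)$, $H=(A,+)$, $\alpha=\lambda$, with $H^G=\Fix(A)$. Your careful verification that $H^G=\Fix(A)$ and that triviality of $A$ is equivalent to $\left|\Fix(A)\right|=pq$ supplies precisely the identifications the paper leaves implicit.
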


\begin{cor}
    \label{cor:theta graph pq}
Let $p>q$ be two prime numbers such that $p\equiv1\pmod q$ 
and $A$ be a skew brace of size $pq$.
\begin{enumerate}
\item If $A$ is trivial of abelian type, then $\Theta(A)$ has no vertices. 
\item If $\left|\Fix_\theta(A)\right|=p$, then $\Theta(A)$ is the complete graph with $q-1$ vertices.
\item If $\left|\Fix_\theta(A)\right|=q$, then $\Theta(A)$ is the complete graph with $p-1$ vertices.
\item If $\left|\Fix_\theta(A)\right|=1,$ then $\Theta(A)$ has two connected components:
one is the complete graph with $q-1$ vertices and
the other is the complete graph with $\frac{p-1}{q}$ vertices.
\end{enumerate}
\end{cor}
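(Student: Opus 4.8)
The plan is to obtain the corollary as a direct specialization of Theorem~\ref{thm:graph pq} to the $\theta$-action of $A$. I would take $H=(A,+)$, of order $pq$, and $G=(A,+)\rtimes_\lambda(A,\circ)$, of order $|A|^2=p^2q^2$, together with the action $\alpha=\theta$ by automorphisms introduced in the Preliminaries. Since $|G|=p^2q^2$ divides $p^2q^2$ and $|H|=pq$, the hypotheses of Theorem~\ref{thm:graph pq} are satisfied, and by Definition~\ref{defn:graph} we have $\mathcal{C}(\theta)=\Theta(A)$.

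The one identification to record is that the fixed-point set $H^G$ coincides with $\Fix_\theta(A)$, the set of elements having a trivial $\theta$-orbit. As $G$ acts by automorphisms, $H^G=\Fix_\theta(A)$ is a subgroup of $(A,+)$, so its order divides $|A|=pq$ and hence equals one of $1,p,q,pq$; these four possibilities are exactly the four cases of the corollary. Cases (2), (3) and (4) then transcribe word for word from the corresponding cases of Theorem~\ref{thm:graph pq}, replacing $|H^G|$ by $|\Fix_\theta(A)|$.

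It remains only to match case (1). Here $|\Fix_\theta(A)|=pq$ means $\Fix_\theta(A)=A$, i.e.\ the $\theta$-action is trivial and $\Theta(A)$ has no vertices; by the earlier characterization of skew braces with empty $\theta$-graph, this occurs precisely when $A$ is a trivial skew brace of abelian type (indeed $\Fix_\theta(A)=A$ forces $(A,+)$ abelian and $\lambda$ trivial, whence $\circ=+$). I expect no genuine obstacle: the whole argument is a dictionary translation of the general group-action result into skew-brace language, the only routine checks being that $|G|=p^2q^2$ and that the four orders of $\Fix_\theta(A)$ exhaust the stated cases.
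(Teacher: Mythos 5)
Your proposal is correct and is essentially the paper's own argument: the corollary is stated as a direct specialization of Theorem~\ref{thm:graph pq} to the action $\theta$ of $G=(A,+)\rtimes_\lambda(A,\circ)$ (of order $p^2q^2$) on $H=(A,+)$, with $H^G=\Fix_\theta(A)$, and the paper's subsequent remark records the same exhaustiveness observation that $\left|\Fix_\theta(A)\right|\in\{1,p,q,pq\}$. Your handling of case (1) via the characterization of empty $\theta$-graphs (trivial skew braces of abelian type) likewise matches the paper's earlier proposition.
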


\begin{rem}
  If $A$ is a skew brace of size $pq$, then 
    $\left|\Fix(A)\right|$ and $\left|\Fix_\theta(A)\right|$
    are numbers in $\{1,p,q,pq\}$. 
    Thus Corollaries~\ref{cor:lambda graph pq} and~\ref{cor:theta graph pq}
    describe the graphs of all possible skew braces of size $pq$.  
\end{rem}

\begin{lem}\label{lem: Fix theta size pq}
Let $p>q$ be two prime numbers
and $A$ be a skew brace of size $pq$.
Then if $A$ is of abelian type, $\Fix_\theta(A)=\Fix(A)$. Otherwise $\Fix_\theta(A)=\{0\}$.
\end{lem}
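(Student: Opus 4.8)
The plan is to reduce the whole statement to the identity $\Fix_\theta(A)=\Fix(A)\cap Z(A,+)$ recorded in the preliminaries, and then to invoke a standard fact about the centers of groups of order $pq$. Thus almost no computation is needed: both cases follow once the additive center $Z(A,+)$ is understood.

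First I would dispose of the abelian case. If $A$ is of abelian type, then $(A,+)$ is abelian, so $Z(A,+)=A$. Substituting into the identity above gives
\[
\Fix_\theta(A)=\Fix(A)\cap Z(A,+)=\Fix(A)\cap A=\Fix(A),
\]
which is the first assertion.

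For the remaining case, suppose $A$ is not of abelian type, so that $(A,+)$ is a non-abelian group of order $pq$. The key point is that such a group has trivial center. Indeed, since $(A,+)$ is non-abelian, $Z(A,+)$ is a proper subgroup, so $|Z(A,+)|\in\{1,p,q\}$ by Lagrange's theorem. If $|Z(A,+)|$ were $p$ or $q$, then the quotient $(A,+)/Z(A,+)$ would have prime order, hence be cyclic, which forces $(A,+)$ to be abelian --- a contradiction. Therefore $Z(A,+)=\{0\}$, and the identity above yields
\[
\Fix_\theta(A)=\Fix(A)\cap Z(A,+)=\Fix(A)\cap\{0\}=\{0\},
\]
as required.

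The argument has no real obstacle: the only non-formal ingredient is the classical observation that a non-abelian group of order $pq$ is centerless, which itself rests on the elementary ``$G/Z(G)$ cyclic implies $G$ abelian'' lemma. Note that non-abelianity of $(A,+)$ already forces $p\equiv 1\pmod q$, but this is not even needed here, so the hypothesis $p\equiv1\pmod q$ used elsewhere can be omitted from this lemma.
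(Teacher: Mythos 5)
Your proof is correct and follows the same route as the paper: the paper's proof is the single line ``this follows easily from the fact that the centre of a non-abelian group of order $pq$ is trivial,'' implicitly combined with the identity $\Fix_\theta(A)=\Fix(A)\cap Z(A,+)$ from the preliminaries, exactly as you do. You have simply written out the details (the Lagrange argument and the $G/Z(G)$-cyclic lemma) that the paper leaves to the reader.
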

\begin{proof}
    This follows easily from the fact that the centre of a non-abelian group of order $pq$ is trivial.
\end{proof}

We now recall the classification of skew braces of order $pq$ in the main theorem of~\cite{MR4085764}. 
Let $p$ and $q$ be prime numbers such that $p\not\equiv 1 \pmod q$. 
Then there is only one skew brace of size $pq$, the trivial one. 

\begin{thm}[Acri--Bonatto]
\label{pq:thm}
   Let $p>q$ be primes such that $p\equiv 1 \pmod q$ and 
   $g$ be a fixed element of $(\Z/p\Z)^\times$ of order $q$. 
   A complete list of the $2q+2$ isomorphism classes of 
   skew braces of order $pq$ is the following.
   \begin{enumerate}
   \item The trivial skew brace on $\Z/p\Z\times\Z/q\Z$.
   \item The trivial skew brace on $\Z/p\Z\rtimes_g\Z/q\Z$. 
   \item The additive group $\Z/p\Z\times\Z/q\Z$ with $(n,m)\circ(s,t)=(n+g^ms,m+t)$
   \item Additive group $\Z/p\Z\rtimes_g\Z/q\Z$ and $(n,m)\circ(s,t)=(f(n,m,s,t),m+t)$
        where
        \begin{enumerate}
            \item $f(n,m,s,t)=g^tn+g^ms$. 
            \item $f(n,m,s,t)=n+(g^\gamma)^ms$ for all $1<\gamma\leq q$.
            \item $f(n,m,s,t)=g^tn+(g^\mu)^ms$ for all $1<\mu\leq q$. 
        \end{enumerate}
   \end{enumerate}
\end{thm}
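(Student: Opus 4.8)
The plan is to classify the skew braces of order $pq$ by fixing the possible structures on each of the groups $(A,+)$ and $(A,\circ)$ and then determining the compatible $\lambda$-maps. Since $p>q$ and $p\equiv 1\pmod q$, there are exactly two isomorphism types of groups of order $pq$: the cyclic group $\Z/p\Z\times\Z/q\Z$ and the nonabelian group $\Z/p\Z\rtimes_g\Z/q\Z$. Because a skew brace is entirely determined by its additive group together with the homomorphism $\lambda\colon(A,\circ)\to\Aut(A,+)$ via $a\circ b=a+\lambda_a(b)$, the classification reduces to enumerating the admissible $\lambda$ for each choice of additive group and then collapsing the resulting list under skew brace isomorphism.

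First I would establish the key structural reduction: the unique Sylow $p$-subgroup $P$ of $(A,+)$ is a common ideal. Since every $\lambda_a$ is an additive automorphism it fixes the set $P$ of additive $p$-elements, so for $a,b\in P$ one has $a\circ b=a+\lambda_a(b)\in P$; hence $P$ is also the Sylow $p$-subgroup of $(A,\circ)$, it is normal in both operations, and $\lambda_a(P)=P$, i.e.\ $P$ is an ideal. As $|P|=p$, the induced sub-skew-brace on $P$ is trivial, so $\lambda_a|_P=\id_P$ for every $a\in P$, and the quotient $A/P$ is the trivial skew brace on $\Z/q\Z$. Thus every skew brace of order $pq$ is an extension of $\Z/q\Z$ by the trivial brace on $\Z/p\Z$, and the whole structure is controlled by a single element $b$ of $\circ$-order $q$ lying over a generator of the quotient.

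Next I would parametrize these extensions by two pieces of data: the conjugation action in $(A,+)$ of a complement on $P$, which is either trivial (making $(A,+)$ abelian) or the order-$q$ action $a\mapsto g^t a$ (making $(A,+)$ nonabelian); and the restriction $\lambda_b|_P\in\Aut(P)\cong(\Z/p\Z)^\times$. Since $\lambda_a|_P=\id_P$ for $a\in P$ and $(P,\circ)$ is normal in $(A,\circ)$, the map $a\mapsto\lambda_a|_P$ factors through $(A,\circ)/(P,\circ)\cong\Z/q\Z$, forcing $\lambda_b|_P$ into the unique subgroup $\langle g\rangle$ of order $q$; write $\lambda_b|_P=g^s$ with $0\le s<q$. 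A direct check of the skew left distributivity then shows that each admissible pair (additive action, $g^s$) yields a genuine skew brace, and one recovers precisely the operations displayed in cases (1)--(4).

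Finally I would verify completeness and pairwise non-isomorphism to arrive at exactly $2q+2$ classes. The non-isomorphism is the delicate part and the main obstacle: two parameter sets may define isomorphic skew braces via an automorphism that simultaneously rescales a generator of $P$ and replaces $b$ by a power or a conjugate, and one must track exactly how such maps act on the pair (additive action, $g^s$) to isolate the genuinely redundant parameters. Carrying out this bookkeeping shows that the abelian additive type contributes $2$ classes (the trivial brace and, after rescaling $b$, a single nontrivial brace with all $s\neq0$ coinciding) and the nonabelian additive type contributes $2q$ classes (the trivial brace, case 4(a), and $q-1$ classes from each of 4(b) and 4(c)), for a total of $2q+2$ matching the stated list. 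The distributivity verifications are routine; the genuine work lies in the isomorphism bookkeeping that fixes the count.
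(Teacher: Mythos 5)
The paper itself contains no proof of this statement: Theorem~\ref{pq:thm} is imported verbatim from Acri and Bonatto~\cite{MR4085764}, so your proposal can only be judged on its own terms, not against an internal argument of the paper.

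Your structural reduction is sound: the unique Sylow $p$-subgroup $P$ of $(A,+)$ is $\lambda$-invariant, hence closed under $\circ$, hence the unique (so normal) Sylow $p$-subgroup of $(A,\circ)$ as well, and therefore an ideal; $P$ and $A/P$ are trivial skew braces because they have prime order; and $a\mapsto\lambda_a|_P$ factors through $(A,\circ)/(P,\circ)\cong\Z/q\Z$, so $\lambda_b|_P\in\langle g\rangle$. The genuine gap is in the parametrization step. The pair consisting of the additive isomorphism type together with $\lambda_b|_P=g^s$ does \emph{not} determine the skew brace, so the claim that each admissible pair recovers precisely one of the listed operations cannot be correct. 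What your data forgets is the restriction of the homomorphism $\lambda$ to $P$ itself: you correctly note $\lambda_a|_P=\id_P$ for $a\in P$, but this does not force $\lambda_a=\id_A$. When $(A,+)=\Z/p\Z\rtimes_g\Z/q\Z$, the group $\Aut(A,+)\cong\Z/p\Z\rtimes(\Z/p\Z)^\times$ contains an order-$p$ normal subgroup of automorphisms (the inner automorphisms coming from $+$-conjugation by elements of $P$) which fix $P$ pointwise and induce the identity on $A/P$, and $\lambda|_P\colon P\to\Aut(A,+)$ can be either trivial or a nontrivial map into this subgroup. Both possibilities occur and yield non-isomorphic skew braces with identical parameters in your scheme: the trivial brace (2) and case (4a) both have exponent $s=0$, and cases (4b) and (4c) with $\mu=\gamma$ both have exponent $s=\gamma-1$. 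Indeed, the paper's own computation in the proof of Theorem~\ref{pq:graph:thm} shows that $\lambda$ restricted to $P$ is the identity map for $T_2$ and $E_\gamma$, whereas for $D$ and $F_\mu$ the element $(n,0)\in P$ acts as $+$-conjugation by $(-n,0)$.

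Consequently your framework offers at most $2\times q$ parameter values in total, so it can never separate the $2q+2$ isomorphism classes; in particular your final tally, which assigns $2q$ classes to the nonabelian additive type by listing the trivial brace, (4a), and $q-1$ classes from each of (4b) and (4c), is the correct answer but contradicts, rather than follows from, the parametrization you set up. The repair is to classify the full homomorphism $\lambda\colon(A,\circ)\to\Aut(A,+)$ on a generating set, i.e.\ to record both $\lambda|_P$ (trivial, or the embedding into the inner automorphisms by $P$) and $\lambda_b$, and only then carry out the isomorphism bookkeeping under $\Aut(A,+)$-conjugation and change of the generator $b$. Note that in the abelian additive case your scheme happens to be complete, because for $\Z/p\Z\times\Z/q\Z$ every automorphism fixing $P$ pointwise and inducing the identity on $A/P$ is the identity, which forces $P\subseteq\ker\lambda$; this is exactly why the asymmetry between the two additive types (two classes versus $2q$ classes) arises, and it is invisible to the data you retained.
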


\begin{pro}
    Let $p$ and $q$ be different prime numbers and 
    $A$ and $B$ be a skew brace of size $pq$. Then $A$ and $B$ are 
    isoclinic if and only if $A$ and $B$ are isomorphic. 
\end{pro}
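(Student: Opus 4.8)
The plan is to prove the two implications separately, the forward one being immediate. If $A\cong B$ then an isomorphism $f\colon A\to B$ carries $\Ann(A)$ onto $\Ann(B)$ and $A'$ onto $B'$, so it induces isomorphisms $\xi\colon A/\Ann(A)\to B/\Ann(B)$ and $\delta\colon A'\to B'$ which automatically make the diagram of Definition~\ref{defn:isoclinism} commute, since $f$ preserves both $[\cdot,\cdot]_+$ and $*$. Hence $A$ and $B$ are isoclinic. For the converse I would exploit that, for skew braces of this very restricted order, the annihilator is forced to be extremal, so that isoclinism already pins down the isomorphism type.

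The heart of the argument is a computation of $\Ann(A)$ for an arbitrary skew brace $A$ of order $pq$. Assume without loss of generality $p>q$. If $p\not\equiv 1\pmod q$, then $\Z/pq\Z$ is the only group of order $pq$ and the trivial brace on it is the only skew brace of that order, so both $A$ and $B$ equal $\Triv(\Z/pq\Z)$ and there is nothing to prove; I would therefore assume $p\equiv 1\pmod q$ and invoke the Acri--Bonatto list of Theorem~\ref{pq:thm}. Since $\Ann(A)=\Soc(A)\cap Z(A,\circ)\subseteq Z(A,+)\cap Z(A,\circ)$ and a nonabelian group of order $pq$ is centreless, I get $\Ann(A)=\{0\}$ as soon as \emph{either} $(A,+)$ \emph{or} $(A,\circ)$ is nonabelian. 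The only remaining case is that both $(A,+)$ and $(A,\circ)$ are abelian; scanning Theorem~\ref{pq:thm}, the additive-abelian classes are exactly the trivial brace on $\Z/pq\Z$ and the family of item~(3), and the latter has nonabelian multiplicative group. Thus the two-sided-abelian case occurs only for $\Triv(\Z/pq\Z)$, where $\Ann(A)=A$. Consequently every skew brace of order $pq$ satisfies $\Ann(A)\in\{\{0\},A\}$, with $\Ann(A)=A$ precisely for the trivial abelian brace.

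With this dichotomy the conclusion is formal. Isoclinism provides an isomorphism $\xi\colon A/\Ann(A)\to B/\Ann(B)$ of skew braces, so $|A|/|\Ann(A)|=|B|/|\Ann(B)|$; as $|A|=|B|=pq$ this yields $|\Ann(A)|=|\Ann(B)|$. If this common value is $pq$, then both $A$ and $B$ are the trivial brace on $\Z/pq\Z$ and are in particular isomorphic. Otherwise $\Ann(A)=\Ann(B)=\{0\}$, whence $A/\Ann(A)=A$ and $B/\Ann(B)=B$ as skew braces, and the isoclinism isomorphism $\xi$ is literally a skew brace isomorphism $A\to B$. In either case $A\cong B$, completing the proof.

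The only genuine input is the annihilator computation, and within it the single nontrivial point is that a nontrivial skew brace of order $pq$ never has both operations abelian; I expect this to be the main obstacle, and it is exactly where the classification of Theorem~\ref{pq:thm} is used rather than a purely structural argument. The final upgrade from isoclinism to isomorphism then rests on the fact that, per Definition~\ref{defn:isoclinism}, $\xi$ is an isomorphism of the quotient skew braces, so that trivial annihilators make isoclinism coincide with isomorphism---the exact brace-theoretic analogue of the classical statement that centreless isoclinic groups are isomorphic.
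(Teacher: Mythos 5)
Your proposal is correct and follows essentially the same route as the paper: both arguments reduce to computing that $\Ann(A)$ is trivial for every skew brace of order $pq$ except the trivial one on $\Z/p\Z\times\Z/q\Z$ (via the Acri--Bonatto classification together with the fact that the nonabelian group of order $pq$ has trivial centre), and then observe that trivial annihilators turn the isoclinism isomorphism $\xi$ into a skew brace isomorphism. Your write-up merely makes explicit the steps the paper leaves implicit (the forward implication, the case $p\not\equiv 1\pmod q$, and the size argument ruling out one annihilator being $\{0\}$ while the other is the whole brace).
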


\begin{proof}
    By Theorem~\ref{pq:thm} and since $Z(\Z/p\Z\rtimes_g\Z/q)=\{0\}$,
    then $\Ann(A)=\{0\}$ unless $A$ is the trivial skew brace on $\Z/p\Z\times\Z/q\Z$.
    The same holds for $B$.
    Thus $A$ and $B$ are isoclinic if and only if they are isomorphic.
\end{proof}

Now we can go through the classification 
of skew braces of size $pq$ and compute their graphs. 
For that purpose, we introduce the following notation
for the skew braces of Theorem~\ref{pq:thm} (Table~\ref{tab:pq}).

\begin{table}[H]
\caption{Skew braces of size $pq$.}
\label{tab:pq}
\begin{center}
\begin{tabular}{|c|c|c|} 
 \hline
 Notation & Theorem & Comments\\ 
 \hline
 $T_1$ & \ref{pq:thm}(1) & Trivial\\ 
 $T_2$ & \ref{pq:thm}(2) & Trivial\\ 
 $C$ & \ref{pq:thm}(3) & Bi-skew\\ 
 $D$ & \ref{pq:thm}(4a)& Two-sided\\ 
 $E_\gamma$ & \ref{pq:thm}(4b) & $1<\gamma\leq q$, Bi-skew \\ 
 $F_\mu$ & \ref{pq:thm}(4c) & $1<\mu\leq q$\\ 
 \hline
\end{tabular}
\end{center}
\end{table}

\begin{thm}
\label{pq:graph:thm}
Using the notation just introduced, the following hold.
    \begin{itemize}
        \item $\Lambda(T_1 )$, $\Lambda(T_2)$ and $\Theta(T_1 )$ have no vertices.
        \item $\Lambda(C )$, $\Lambda(E_\gamma )$ and $\Theta(C )$ are complete graphs with $p-1$ vertices.
        \item $\Lambda(D )$ is the complete graph with $q-1$ vertices.
        \item $\Lambda(F_\mu )$, $\Theta(T_2 )$, $\Theta(D )$, $\Theta(E_\gamma )$ and $\Theta(F_\mu )$ for all $1<\gamma\leq q$ and $1<\mu\leq q$ are the graph with two connected components: one is the complete graph with $q-1$ vertices and the other is the complete graph with $\frac{p-1}{q}$ vertices.
    \end{itemize}
\end{thm}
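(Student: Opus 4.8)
The plan is to compute, for each skew brace in Table~\ref{tab:pq}, the sizes $\left|\Fix(A)\right|$ and $\left|\Fix_\theta(A)\right|$, and then invoke Corollaries~\ref{cor:lambda graph pq} and~\ref{cor:theta graph pq} together with Lemma~\ref{lem: Fix theta size pq} to read off the graphs. By the remark following those corollaries, $\left|\Fix(A)\right|$ and $\left|\Fix_\theta(A)\right|$ each lie in $\{1,p,q,pq\}$, so the entire classification reduces to deciding which of these four values occurs in each case. Thus I would organize the proof around the single task: \emph{determine the fixed-point sets}.

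First I would dispose of the trivial cases. For $T_1=\Triv(\Z/p\Z\times\Z/q\Z)$ the $\lambda$-action is trivial, so $\Fix(T_1)=T_1$ and, being of abelian type, $\Fix_\theta(T_1)=\Fix(T_1)$ has size $pq$; both graphs are empty. For $T_2=\Triv(\Z/p\Z\rtimes_g\Z/q\Z)$ the $\lambda$-action is again trivial so $\Lambda(T_2)$ is empty, but now the additive group is nonabelian of order $pq$, so by Lemma~\ref{lem: Fix theta size pq} $\Fix_\theta(T_2)=\{0\}$ and case (4) of Corollary~\ref{cor:theta graph pq} applies. For the remaining braces $C,D,E_\gamma,F_\mu$ I would directly compute $\lambda_{(n,m)}(s,t) = -(n,m)+(n,m)\circ(s,t)$ from the given multiplications; since $g\in(\Z/p\Z)^\times$ has order exactly $q$, the exponent computations are short. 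The key observation is that an element lies in $\Fix$ precisely when its image is unmoved by every $\lambda_{(n,m)}$, which after writing out the formula becomes a congruence condition on the coordinates that is satisfied either by the whole $q$-part, the whole $p$-part, or nothing. For the bi-skew braces $C$ and $E_\gamma$ one finds $\left|\Fix\right|=q$, giving the complete graph on $p-1$ vertices; for $D$ one finds $\left|\Fix\right|=p$, giving $q-1$ vertices; and for $F_\mu$ one finds $\left|\Fix\right|=1$, giving the two-component graph.

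For the $\theta$-graphs of the non-abelian-type braces $D,E_\gamma,F_\mu$, Lemma~\ref{lem: Fix theta size pq} immediately gives $\Fix_\theta=\{0\}$, so case (4) of Corollary~\ref{cor:theta graph pq} yields the two-component graph in every instance; only $\Theta(C)$ requires separate attention, and there $C$ is of abelian type so $\Fix_\theta(C)=\Fix(C)$ has size $q$, giving the complete graph on $p-1$ vertices. This matches the stated assignments. The main obstacle is simply the bookkeeping: one must correctly identify, for each of the families (4a)--(4c), whether the additive group is $\Z/p\Z\times\Z/q\Z$ or the semidirect product $\Z/p\Z\rtimes_g\Z/q\Z$ and whether the brace is of abelian type, since this controls which corollary-case and which value of $\left|\Fix_\theta\right|$ applies; once the fixed-point sizes are pinned down, the graphs follow mechanically. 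I would therefore present the computation as a short case analysis, and I expect the only genuine subtlety to be verifying the order-$q$ exponent conditions that force $\left|\Fix\right|$ into $\{1,p,q\}$ rather than a full computation of every orbit.
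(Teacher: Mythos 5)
Your proposal is correct and follows essentially the same route as the paper: compute $\left|\Fix(A)\right|$ for each brace via the explicit $\lambda$-action, read off $\Lambda(A)$ from Corollary~\ref{cor:lambda graph pq}, and then settle the $\theta$-graphs by checking abelian type and applying Lemma~\ref{lem: Fix theta size pq} together with Corollary~\ref{cor:theta graph pq}. The fixed-point sizes you report ($q$ for $C$ and $E_\gamma$, $p$ for $D$, $1$ for $F_\mu$) agree with the paper's computations, so nothing is missing.
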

\begin{proof}
Let's start computing the $\Lambda$-graphs.
Clearly, trivial skew braces have a graph without vertices.
Let $A$ be a non-trivial skew brace of order $pq$. Then, by Theorem~\ref{thm:graph pq}, we just need to compute the order of $\Fix(A)$. 

If $A=C$, and $(n,m),(s,t)\in C$.
 \[
\lambda_{(n,m)}(s,t)=(-n,-m)+(n+g^ms,m+t)=(g^ms,t).
\] 
Thus $(s,t)\in\Fix(C)$ if and only if $g^ms=s$ for all $m\in\Z/q\Z$, which means that $s=0$. Hence $\Fix(C)=\{0\}\times\Z/q\Z$ has order $q$.
    
In all the other cases, 
\[
\lambda_{(n,m)}(s,t)=\left(-g^{-m}n+g^{-m}f(n,m,s,t),t\right)=\left(g^{-m}\left(-n+f(n,m,s,t)\right),t\right).
\]

Thus $(s,t)\in\Fix(A)$ if and only if $-n+f(n,m,s,t)=g^ms$ in $\Z/p\Z$, for all $n\in\Z/p\Z$ and $m\in\Z/q\Z$.

In the case $A=D$ we have that $f: (n,m,s,t)\mapsto g^tn+g^ms$, 
so $(s,t)\in\Fix(D)$ if and only if $-n+g^tn+g^ms=g^ms$ for all $n\in\Z/p\Z$ and $m\in\Z/q\Z$.
Therefore $\Fix(D)=\Z/p\Z\times\{0\}$ has order $p$.

If $A=E_\gamma$, we have $f: (n,m,s,t)\mapsto n+(g^\gamma)^ms$,
so $(s,t)\in\Fix(E_\gamma)$ if and only if 
$-n+n+(g^\gamma)^ms=g^ms$ for all $n\in\Z/p\Z$ and $m\in\Z/q\Z$.
Hence $\Fix(E_\gamma)=\{0\}\times\Z/q\Z$ has order $q$.

Finally,  $f: (n,m,s,t)\mapsto g^tn+(g^\mu)^ms$ for $A=F_\mu$,
so $(s,t)\in\Fix(F_\mu)$ if and only if
$-n+g^tn+(g^\mu)^ms=g^ms$ for all $n\in\Z/p\Z$ and $m\in\Z/q\Z$.
In particular, $g^t=1$ and $g^\mu s=gs$.
Thus $\Fix(F_\mu)=\{0\}\times\{0\}$ has order 1.

In order to compute the $\theta$-graph, 
thanks to Theorem~\ref{cor:theta graph pq} and 
Lemma~\ref{lem: Fix theta size pq}, we 
just need to check which skew braces are of abelian type.
Hence the results follow.
\end{proof}

\section{Skew braces with graphs with two disconnected vertices}\label{sec:2vertices}
In this section, we study skew braces whose graphs have exactly two disconnected vertices. Note that in~\cite{MR1099007} it is proved that for groups this is possible only for
the symmetric group $\S_3$.
So we have the same $\lambda$-graph for the almost trivial skew brace on $\S_3$, and actually, this is the only case.
The situation for the $\theta$-graph is more involved, but we show that this only appears in skew braces of order six (Table~\ref{SB6}).

\begin{table}[H]
\caption{Skew braces of size $6$, where $f=\left|\Fix(A)\right|$.}
\label{SB6}
\begin{center}
\begin{tabular}{|c|c|c|c|c|c|} 
 \hline
 $A$ & $(A,+)$ & $(n,m)\circ(s,t)$ & $f$ & $\Lambda(A)$ & $\Theta(A)$\\ 
 \hline
$T_1 $ & $\Z/3\Z\times\Z/2\Z$ & $(n+s,m+t)$ 
& $6$ & & \\ 
$T_2 $ & $\Z/3\Z\rtimes_{-1}\Z/2\Z$ & $(n+(-1)^ms,m+t)$
& $6$ & & $\bullet\quad\bullet$ \\ 
$D $ & $\Z/3\Z\rtimes_{-1}\Z/2\Z$ & $((-1)^tn+(-1)^ms,m+t)$
& $3$ & $\bullet$ & $\bullet\quad\bullet$\\ 
$C $ & $\Z/3\Z\times\Z/2\Z$ & $(n+(-1)^ms,m+t)$
& $2$ & $\bullet$---$\bullet$ & $\bullet$---$\bullet$\\ 
$E_2 $ & $\Z/3\Z\rtimes_{-1}\Z/2\Z$ & $(n+s,m+t)$
& $2$ & $\bullet$---$\bullet$ & $\bullet\quad\bullet$\\  
$F_2 $ & $\Z/3\Z\rtimes_{-1}\Z/2\Z$ &$((-1)^tn+s,m+t)$ 
& $1$ & $\bullet\quad\bullet$ & $\bullet\quad\bullet$\\ 
 \hline
\end{tabular}
\end{center}
\end{table}

\begin{lem}
\label{2discvertices:fix:lem}
Let $G$ and $H$ be finite groups 
and let $\alpha:G\to\Aut(H)$ be an action.
If $\mathcal{C}(\alpha)$ has exactly two disconnected vertices. 
Then $H^G=\{0\}$.
\end{lem}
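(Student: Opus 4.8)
The plan is to reduce the whole statement to the single divisibility constraint on fixed points supplied by Proposition~\ref{fix:pro}. First I would unpack the hypothesis. Saying that $\mathcal{C}(\alpha)$ consists of \emph{exactly two disconnected vertices} means there are precisely two non-trivial orbits, say $L_1$ and $L_2$, and that they are not adjacent, so $\gcd(|L_1|,|L_2|)=1$. Since both orbits are non-trivial we have $|L_1|>1$ and $|L_2|>1$, and coprimality then forces $|L_1|\neq|L_2|$ (two equal sizes exceeding $1$ can never be coprime). Writing $\mathbf{c}(m)$ for the number of orbits of size $m$, this translates into $\mathbf{c}(|L_1|)=\mathbf{c}(|L_2|)=1$, with $\mathbf{c}(m)=0$ for every other $m>1$.

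The key step is then a direct application of Proposition~\ref{fix:pro}, which asserts that $\left|H^G\right|$ divides $m\,\mathbf{c}(m)$ for every $m\in\Z_{\geq 0}$. Taking $m=|L_1|$ yields that $\left|H^G\right|$ divides $|L_1|\cdot 1=|L_1|$, and taking $m=|L_2|$ yields that $\left|H^G\right|$ divides $|L_2|$. Therefore $\left|H^G\right|$ divides $\gcd(|L_1|,|L_2|)=1$, whence $\left|H^G\right|=1$, which is exactly the claim $H^G=\{0\}$.

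I expect no genuine obstacle: the argument is essentially a two-line consequence of the counting result already in place. The only point requiring a moment of care is the book-keeping at the start, namely verifying that "exactly two disconnected vertices" is faithfully encoded as the two conditions $\mathbf{c}(|L_1|)=\mathbf{c}(|L_2|)=1$ together with coprime, and hence distinct, sizes. Once that translation is secured, Proposition~\ref{fix:pro} closes the argument immediately, so the heart of the proof is recognizing that the coprimality forced by non-adjacency is precisely what makes the two divisibility relations collapse the fixed-point subgroup to the identity.
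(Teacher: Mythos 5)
Your proof is correct and is essentially the paper's own argument: the paper likewise observes that the two disconnected vertices are orbits of coprime sizes $m_1,m_2$ and applies Proposition~\ref{fix:pro} to conclude that $\left|H^G\right|$ divides both, hence equals $1$. Your extra bookkeeping (making $\mathbf{c}(|L_1|)=\mathbf{c}(|L_2|)=1$ and the distinctness of the sizes explicit) is just a careful spelling-out of what the paper leaves implicit.
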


\begin{proof}
    Since $\mathcal{C}(\alpha)$ has two disconnected vertices,
    there are only two non-trivial orbits of coprime orders $m_1$ and $m_2$.
    By Proposition~\ref{fix:pro}, $\left|H^G\right|$ divides both $m_1$ and $m_2$, so $\left|H^G\right|=1$.
\end{proof}

\begin{pro}
    Let $A$ be a finite skew brace. If $\Lambda(A)$ has exactly two disconnected vertices, then $A$ is the almost trivial skew brace on $\S_3$. 
\end{pro}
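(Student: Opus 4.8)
The plan is to use Lemma~\ref{2discvertices:fix:lem} to deduce $\Fix(A)=\{0\}$ and then exploit the arithmetic forced by having exactly two \emph{disconnected} vertices. Since $\Lambda(A)=\mathcal{C}(\lambda)$ with $G=(A,\circ)$ and $H=(A,+)$ acting by automorphisms, and these share the same prime factors, the two non-trivial $\lambda$-orbits $\Lambda(x_1),\Lambda(x_2)$ have coprime sizes $m_1,m_2>1$. By the lemma, $\Fix(A)=H^G=\{0\}$, so $A\setminus\Fix(A)=A\setminus\{0\}$ is partitioned by the orbits, giving $|A|=1+m_1+m_2$. I would then combine this with the divisibility $m_i\mid |A|$ coming from Orbit--Stabilizer.

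The arithmetic core is the key step. From $m_i\mid |A|=1+m_1+m_2$ and $\gcd(m_1,m_2)=1$, I would argue that the only solution with $m_1,m_2>1$ is (up to order) $m_1=2,m_2=3$, forcing $|A|=6$. Indeed, $m_1\mid 1+m_1+m_2$ gives $m_1\mid 1+m_2$, and symmetrically $m_2\mid 1+m_1$; together with coprimality and $m_i>1$ this pins down $\{m_1,m_2\}=\{2,3\}$. Hence $|A|=6$ and $\Fix(A)=\{0\}$, so $A$ is one of the six skew braces of order six in Table~\ref{SB6}. Scanning the $\Lambda(A)$ column, the only entry that is two disconnected vertices ($\bullet\quad\bullet$) is $F_2$, the almost trivial skew brace on $\S_3=\Z/3\Z\rtimes_{-1}\Z/2\Z$.

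To make the identification of $F_2$ with $\op\Triv(\S_3)$ explicit (rather than merely quoting the table), I would note that for $F_2$ the multiplicative group is $\Z/3\Z\rtimes_{-1}\Z/2\Z\cong\S_3$ and the operation $(n,m)\circ(s,t)=((-1)^tn+s,m+t)$ realizes $\lambda_{(n,m)}$ as conjugation by the additive inverse, which is exactly the defining feature of the almost trivial skew brace; alternatively, this is already recorded in Theorem~\ref{pq:graph:thm} and Table~\ref{tab:pq} where $F_2$ appears as the $pq$-brace with $\Fix$ of order $1$. The main obstacle I anticipate is purely the bookkeeping: being careful that exactly two disconnected vertices means two non-trivial orbits with \emph{coprime} sizes (so no edge), and confirming via the order-six classification that $F_2$ is genuinely the unique skew brace whose $\lambda$-graph is $\bullet\quad\bullet$, which the table makes transparent. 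No deep structural input beyond the counting is needed once Lemma~\ref{2discvertices:fix:lem} supplies $\Fix(A)=\{0\}$.
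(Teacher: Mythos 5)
Your proof is correct, and it follows the paper's skeleton (Lemma~\ref{2discvertices:fix:lem} gives $\left|\Fix(A)\right|=1$, the class equation gives $|A|=1+m_1+m_2$, arithmetic forces $|A|=6$, and the order-six classification singles out $F_2\cong\op\Triv(\S_3)$), but your arithmetic core is genuinely different and more elementary. The paper invokes Proposition~\ref{pro: coprime orbits}: coprimality of $m_1,m_2$ gives $\Stab(a)\circ\Stab(b)=A$, hence $|A|=hm_1m_2$ with $h=\left|\Stab(a)\cap\Stab(b)\right|$, and then the inequality $2m_2\geq 1+m_1+m_2=hm_1m_2$ (using $1<m_1<m_2$) forces $m_1=2$, $h=1$, so $m_2=3$ and $|A|=6$. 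You instead use only the Orbit--Stabilizer divisibility $m_i\mid |A|$: from $m_1\mid 1+m_2$ and $m_2\mid 1+m_1$ with $1<m_1<m_2$ you get $m_2\leq 1+m_1\leq m_2$, hence $m_2=m_1+1$, then $m_1\mid 2$, pinning down $\{m_1,m_2\}=\{2,3\}$ (note that coprimality is only needed to ensure $m_1\neq m_2$; the divisibility does the rest). Your route avoids the stabilizer-product machinery entirely, at the cost of not recovering the extra fact $\Stab(a)\cap\Stab(b)=\{0\}$ that the paper's computation yields as a by-product (though the paper never uses it). The final identification is the same in both: with $|A|=6$ and trivial $\Fix$, Theorem~\ref{pq:graph:thm} (equivalently Table~\ref{SB6}) leaves only $F_2$, whose operation $(n,m)\circ(s,t)=(s,t)+(n,m)$ is precisely the almost trivial structure on $\S_3$.
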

\begin{proof}
    Let $\Lambda(a)$ and $\Lambda(b)$ be the two different non-trivial orbits and 
    $m_1=\left|\Lambda(a)\right|$ and $m_2=|\Lambda(b)|$. By Lemma~\ref{2discvertices:fix:lem}, $\left|\Fix(A)\right|=1$. Thus the class equation in this case becomes
    $|A|=1+m_1+m_2$. Moreover, since $m_1$ and $m_2$ are coprime, 
    by Proposition~\ref{pro: coprime orbits}, we also have that $\Stab(a)\circ\Stab(b)=A$. Thus there exists a positive integer $h$ such that 
    \[
    h|A|=\left|\Stab(a)\right|\left|\Stab(b)\right|=\frac{|A|}{m_1}\frac{|A|}{m_2}.
    \]
    Hence $|A|=hm_1m_2$ and so 
    $1+m_1+m_2=hm_1m_2$.
    Suppose now, without loss of generality, that $1<m_1<m_2$. 
    Then 
    \[
    2m_2\geq 1+m_1+m_2=hm_1m_2.
    \]
    It follows that $m_1=2$ and $h=1$ and hence $|A|=6$. 
    By Theorem~\ref{thm:graph pq}, 
    $A$ is isomorphic to 
    $F_\mu$ on $\Z/3\Z\rtimes_{-1}\Z/2\Z$ with $\mu=2$. Thus 
    \[
    (n,m)\circ (s,t)=\left((-1)^tn+s,m+t\right)=(s,t)+(n,m).
    \]
    It is easy to check that this is actually isomorphic to the almost trivial skew brace on $\S_3$.
\end{proof}

\begin{lem}[Catalan's conjecture\cite{MR2076124}]
\label{lem: Catalan conj}
The only solution in the natural numbers of $x^a-y^b=1$, for $a,b > 1$ and  $x,y>0$ is $x=3, a = 2, y = 2, b = 3$.    
\end{lem}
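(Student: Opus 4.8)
This is Catalan's conjecture, proved by Mihailescu in \cite{MR2076124}, so the honest plan is not to reprove it but to record it as an external black box and to indicate why no elementary argument suffices here. The statement has a long partial history: Euler already settled the single instance $x^2-y^3=1$, pinning it to $3^2-2^3=1$, and work of Lebesgue, Nagell, Ko Chao and others eliminated various families of exponents, but none of these methods covers arbitrary $a,b>1$. Since the equation $x^a-y^b=1$ is unchanged if one replaces $a$ (resp.\ $b$) by any prime dividing it, the first reduction I would make is to the case of prime exponents; the cases where $2\in\{a,b\}$ fall to the classical elementary results just mentioned, leaving the genuinely hard core $x^p-y^q=1$ with $p,q$ odd primes.

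For this core case the proof is deeply arithmetic and takes place inside the cyclotomic fields $\mathbb{Q}(\zeta_p)$ and $\mathbb{Q}(\zeta_q)$. The steps I would follow are those of Mihailescu's argument: first apply Cassels' relations to obtain the divisibilities $q\mid x$ and $p\mid y$; next establish the double Wieferich condition $p^{q-1}\equiv 1\pmod{q^2}$ and $q^{p-1}\equiv 1\pmod{p^2}$; and finally analyse the minus component of the ideal class group of $\mathbb{Q}(\zeta_p)$ via Stickelberger's theorem together with Thaine's theorem on annihilators, forcing a contradiction with the existence of a nontrivial solution.

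The main obstacle is precisely this last piece of algebraic number theory, which is far outside both the methods and the scope of the present paper and which resisted all attempts for over a century. We therefore simply invoke Mihailescu's theorem as stated. For the applications that follow we in fact only need the resulting fact that distinct perfect powers can differ by $1$ in essentially one way, and having the full statement available lets us dispose of the relevant Diophantine subcases arising in the classification of one-vertex graphs without further case analysis.
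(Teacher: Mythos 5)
Your proposal takes exactly the same approach as the paper: the paper states this lemma with the citation to Mihailescu \cite{MR2076124} and gives no proof at all, treating it purely as an external black box, which is precisely what you do. Your historical remarks and outline of Mihailescu's argument (Cassels' relations, the double Wieferich condition, and the Stickelberger--Thaine analysis in cyclotomic fields) are accurate but not needed for the paper's purposes.
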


\begin{pro}
    Let $A$ be a finite skew brace. If $\Theta(A)$ has exactly two disconnected vertices, then $A$ is of size 6 and is one of these possible skew braces with notation as in Table~\ref{tab:pq}:
    \begin{itemize}
        \item the trivial skew brace on  $\S_3$ (the skew brace $T_1 $ of order 6);
        \item the bi-skew brace $D$ of order 6;
        \item the bi-skew brace $E_2 $ of order 6;
        \item the almost trivial skew brace on $\S_3$ (the skew brace $F_2 $ of order 6).
    \end{itemize}
\end{pro}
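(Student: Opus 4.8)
The plan is to combine the numerical constraints coming from the $\theta$-action with the classification already established in this section. Since $\Theta(A)$ has exactly two disconnected vertices, Lemma~\ref{2discvertices:fix:lem} applied to the $\theta$-action gives $\Fix_\theta(A)=\{0\}$, so the class equation for $\theta$ reads $|A|^2=1+m_1+m_2$, where $m_1=|\Theta(y_1)|$ and $m_2=|\Theta(y_2)|$ are the two coprime $\theta$-orbit sizes (here one uses $|\Theta(a)|=|A|^2/|\Stab_\theta(a)|$). As in the $\lambda$-case, Proposition~\ref{pro: coprime orbits} gives $\Stab_\theta(y_1)\Stab_\theta(y_2)=(A,+)\rtimes_\lambda(A,\circ)$, whence $h\,|A|^2=m_1 m_2$ for some positive integer $h$ after dividing through by the orbit-stabilizer relation; substituting yields $1+m_1+m_2=h\,m_1 m_2$.

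First I would run the same elementary inequality argument as in the $\lambda$-proposition: assuming $1<m_1<m_2$, the relation $1+m_1+m_2=h m_1 m_2$ forces $2m_2\geq h m_1 m_2$, so $h=1$ and $m_1=2$, giving $|A|^2=1+2+m_2$ and hence $m_2=|A|^2-3$. Combined with $m_2=|A|^2/m_1=|A|^2/2$ this pins down $|A|^2=6$, which is impossible for an integer square. This is exactly the point where the $\theta$-case diverges from the $\lambda$-case: because the governing group has order $|A|^2$ rather than $|A|$, the class equation is $|A|^2=1+m_1+m_2$ and the na\"ive count does not immediately terminate. The resolution I would use is that $m_1,m_2$ must \emph{both} divide $|A|^2$ and be coprime, so writing $|A|^2=m_1 m_2 h$ with $m_1=2$ forces $|A|^2$ to have the shape $2\cdot(\text{odd})$, i.e. $|A|$ is even but $|A|^2$ is not divisible by $4$ — contradiction unless we re-examine which orbit sizes are genuinely available. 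Hence the correct bound is that $m_1,m_2$ range over divisors of $|A|^2$ with $\gcd(m_1,m_2)=1$, and I would instead solve $1+m_1+m_2=|A|^2$ directly using Proposition~\ref{sizeA':pro}, which gives $|A'|\geq m_2$.

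The main obstacle, and where Catalan's conjecture (Lemma~\ref{lem: Catalan conj}) enters, is bounding $|A|$ from the equation $1+m_1+m_2=|A|^2$ together with $m_1 m_2\mid |A|^2 h$. Since $A$ is a skew brace of prime-power-free order is not guaranteed, one reduces to the case where the two coprime orbit sizes are themselves prime powers $p^a$ and $q^b$ dividing $|A|^2$; the relation $p^a+q^b=|A|^2-1$ together with multiplicativity forces a difference-of-powers equation to which Lemma~\ref{lem: Catalan conj} applies, eliminating all but finitely many possibilities and pinning $|A|=6$. Once $|A|=6$ is established, I would invoke Corollary~\ref{cor:theta graph pq} and Lemma~\ref{lem: Fix theta size pq}: a two-vertex disconnected $\theta$-graph at size $pq=6$ corresponds to $|\Fix_\theta(A)|=1$ together with the two components degenerating to single vertices, i.e. $q-1=1$ and $(p-1)/q=1$, which holds for $p=3,q=2$. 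Finally I would read off from Theorem~\ref{pq:graph:thm} exactly which of the size-$6$ skew braces have a $\theta$-graph equal to two disconnected vertices, namely $T_1$, $D$, $E_2$ and $F_2$, matching Table~\ref{SB6}; the cases $T_2$ (empty $\theta$-graph via triviality of abelian type) and $C$ (connected $\theta$-graph) are excluded by the table. I expect the power-equation bounding step to be the delicate part, since without the nilpotent-type hypothesis the orbit sizes need not all be prime powers and some care is needed to justify the reduction before Catalan's conjecture can be applied.
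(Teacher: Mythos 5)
Your proof has a fatal error at its foundation: the class equation. The $\theta$-orbits partition the \emph{set} $A$, which has $|A|$ elements; the acting group $(A,+)\rtimes_\lambda(A,\circ)$ has order $|A|^2$, and by orbit--stabilizer each orbit size divides $|A|^2$, but the \emph{sum} of the orbit sizes is $|A|$, not $|A|^2$. So with $\Fix_\theta(A)=\{0\}$ (that step, via Lemma~\ref{2discvertices:fix:lem}, is correct) the equation is $|A|=1+m_1+m_2$, exactly as in the $\lambda$-case, and not $|A|^2=1+m_1+m_2$. Every numerical deduction you make downstream is built on the wrong identity; tellingly, you run into absurdities (``$|A|^2=6$'', ``$|A|^2$ of the shape $2\cdot(\text{odd})$'') and interpret them as genuine features of the $\theta$-setting requiring a ``resolution'', when they are in fact symptoms of the miscounted equation. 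Moreover, the reason the $\lambda$-style argument cannot simply be copied is not the class equation at all (it is the same); it is that $m_1m_2$ need only divide $|A|^2$ rather than $|A|$, so the inequality trick from the two-vertex $\lambda$-proposition no longer pins down $|A|$, and a new idea is needed.

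That new idea is what your sketch is missing, and without it Catalan's conjecture never becomes applicable. The paper's route is: (i) since $\theta$ acts by automorphisms of $(A,+)$, all elements of one $\theta$-orbit have the same additive order; as $\Fix_\theta(A)=\{0\}$, every nonzero element lies in one of the two orbits, which forces $|A|=p^\alpha q^\beta$, $m_1=p^k$, $m_2=q^t$ with $\Theta(a)$ consisting of all elements of additive order $q$ and $\Theta(b)$ of all elements of additive order $p$; (ii) the Fitting subgroup $F$ of $(A,+)$ is characteristic, hence a union of $\theta$-orbits; it is nontrivial (solvability of groups of order $p^\alpha q^\beta$) and proper (else $(A,+)$ would be nilpotent and contain elements of order $pq$), so up to relabelling $F=\{0\}\cup\Theta(b)$ is the unique Sylow $p$-subgroup, yielding the clean equation $p^\alpha=1+q^t$; (iii) Catalan's conjecture (Lemma~\ref{lem: Catalan conj}) applies to \emph{this} equation, and a short case analysis against the class equation $p^\alpha q^\beta=1+p^k+q^t$ forces $|A|=6$. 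Your proposal gestures at ``a difference-of-powers equation forced by multiplicativity'' but never produces one, and indeed cannot: without the constant-additive-order observation and the Fitting/Sylow argument there is no concrete equation of the form $x^a-y^b=1$ to feed into Lemma~\ref{lem: Catalan conj}. Only your endgame --- reading off from Theorem~\ref{pq:graph:thm} (equivalently Table~\ref{SB6}) that among the six skew braces of order $6$ exactly $T_1$, $D$, $E_2$, $F_2$ have a $\theta$-graph with two disconnected vertices --- matches the paper and is sound.
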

\begin{proof}
    Let $\Theta(a)$ and $\Theta(b)$ be the two non-trivial $\theta$-orbits 
    with $m_1=\left|\Theta(a)\right|$ and $m_2=|\Theta(b)|$.
    By Lemma~\ref{2discvertices:fix:lem}, $\left|\Fix_\theta(A)\right|=1$.
    Moreover there exist two different prime numbers $p$ and $q$ such that 
    $p$ divides $m_1$ and $q$ divides $m_2$.
    Since the elements of a $\theta$-orbits have all the same additive order
    and $\Fix_\theta(A)$ is trivial,
    $\Theta(a)$ consists of all the elements of additive order $q$ and 
    $\Theta(b)$ consists of all the elements of additive order $p$.
    Hence $|A|=p^\alpha q^\beta$ for some $\alpha,\beta\in\Z_{>0}$ and
    $m_1=p^k$ and $m_2=q^t$ for some $1\leq k\leq \alpha$ and $1\leq t\leq \beta$.
    Thus the class equation in this case becomes
    \[
    p^\alpha q^\beta=|A|=1+m_1+m_2=1+p^k+q^t.
    \]
    Let $F$ be the Fitting subgroup of $(A,+)$. Since $F$ is a characteristic subgroup, it is a union of $\theta$-orbits.
    Since $|A|=p^\alpha q^\beta$, $(A,+)$ is solvable and so $F\neq\{0\}$.
    Moreover $F\neq A$, otherwise $(A,+)$ would be nilpotent with no elements of order $pq$,
    which is not possible.
    Therefore, without loss of generality, we can assume that $F=\{0\}\cup \Theta(b)$.
    Thus $F$ is the unique Sylow $p$-subgroup of $(A,+)$, so $|F|=p^\alpha$.
    Hence $1+q^t=p^\alpha$.
    By Catalan's conjecture (Lemma~\ref{lem: Catalan conj}),
    either $p=3, \alpha=2, q=2$ and $t=3$ or
    $\alpha\leq 1$ or $t\leq 1$.
    
    If $p=3, \alpha=2, q=2$ and $t=3$, then the class equation becomes
        \[
        3^2 2^\beta=1+3^k+2^3.
        \]
        But $k\leq 2$, so $1+3^k+2^3\leq 18$ and $3\leq \beta$, so $3^2 2^\beta=9\cdot 8\cdot 2^{\beta-3}\geq 72$, a contradiction.
        
    If $\alpha\leq 1$, then $k=\alpha=1$. Therefore $p=1+q^t$ and so $|A|=1+p+q^t=2p$.
        Thus $q=2$ and $\beta=1$, so $|A|=2p$. 
        Hence, by Theorem~\ref{pq:graph:thm}, $p=3$ and the only possibilities in the skew braces of order 6 (Table~\ref{SB6}) are
        the four listed in the statement.
        
    If $t\leq 1$, then $t=1$ and $p^\alpha=1+q$.
        So either $q=2, p=3$ and $k=\alpha=1$ or $q\neq 2$ and $p=2$.
        In the first case, we obtain that $|A|=1+3+2=6$ and by Theorem~\ref{pq:graph:thm}, the only possibilities are
        the four listed in the statement.  
        In the second case, $2^\alpha=1+q$ and the class equation becomes
        \[
        2^\alpha q^\beta=1+2^k+q=2^k+2^\alpha=2^k(1+2^{\alpha-k}).
        \]
        Hence $k=\alpha$ and $q^\beta=1+2^{\alpha-k}=2$, a contradiction with $q\neq 2$.  
\end{proof}

\section{Skew Braces with one vertex \texorpdfstring{$\lambda$}{lambda}-graph: Generalities}\label{sec:onevertex}
In the following sections, we study skew braces whose graph has exactly one vertex. Note that this can not happen in the non-commuting graph of a group, as this implies that the factor by the center of the group is cyclic, which implies that the group is abelian, so has a graph without vertices. The characterization will be divided over three sections. The first sets up a general framework. In the next section, we treat skew braces of abelian type. Afterwards, we deal with general skew braces.

\begin{lem}
\label{1vertex:leftnilp:lem}
Let $A$ be a finite skew brace. If $\Lambda(A)$ has exactly one vertex, 
then $A$ is left nilpotent of class two, $\Fix(A)$ has index two 
and $A^2=\Fix(A)$.
\end{lem}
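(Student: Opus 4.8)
The plan is to first pin down $|\Fix(A)|$ through a divisibility argument, then obtain $A^2=\Fix(A)$ by sandwiching orders, and finally read off left nilpotency of class two almost for free.

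Having a single vertex means there is exactly one non-trivial $\lambda$-orbit $\Lambda(x)$, say of size $m>1$, so that $\mathbf{c}(m)=1$ while $\mathbf{c}(k)=0$ for every other $k>1$. The class equation for the $\lambda$-action then reads $|A|=|\Fix(A)|+m$. Applying Proposition~\ref{fix:pro} with $G=(A,\circ)$ and $H=(A,+)$, so that $H^G=\Fix(A)$, yields $|\Fix(A)|\mid m\,\mathbf{c}(m)=m$. Conversely, orbit--stabilizer gives $m=|A|/|\Stab(x)|$, hence $m\mid|A|$, and substituting into the class equation forces $m\mid|\Fix(A)|$. The two divisibilities together give $|\Fix(A)|=m$, whence $|A|=2m$ and $\Fix(A)$ has index two. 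I expect this to be the main obstacle: everything afterward is essentially bookkeeping, whereas here it is exactly the interplay of Proposition~\ref{fix:pro} with orbit--stabilizer that determines the index.

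Next I would record that $\Fix(A)$ is a subgroup of $(A,+)$, being the intersection over $x\in A$ of the fixed-point subgroups of the automorphisms $\lambda_x$; as it has index two it is normal, and the class equation gives $A\setminus\Fix(A)=\Lambda(x)$. Since any two elements of the single non-trivial coset $\Lambda(x)$ differ additively by an element of $\Fix(A)$, we obtain $\Lambda(x)-\Lambda(x)\subseteq\Fix(A)$. By Proposition~\ref{sizeA2:pro} we have $A^2=\langle\Lambda(x)-\Lambda(x)\rangle_+$, so $A^2\subseteq\Fix(A)$; the same proposition also gives $|A^2|\geq|\Lambda(x)|=m=|\Fix(A)|$, and the inclusion then forces $A^2=\Fix(A)$.

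Finally, I would compute $A^3=A*A^2=A*\Fix(A)$, and observe that for any $a\in A$ and $b\in\Fix(A)$ one has $a*b=\lambda_a(b)-b=0$; hence $A^3=\{0\}$. Since $m>1$ gives $A^2=\Fix(A)\neq\{0\}$, the minimal $n$ with $A^{n+1}=\{0\}$ is $n=2$, so $A$ is left nilpotent of class exactly two, completing the proof.
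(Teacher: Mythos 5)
Your proof is correct, and its skeleton matches the paper's: class equation, a divisibility argument forcing $\left[A:\Fix(A)\right]=2$, Proposition~\ref{sizeA2:pro} for $A^2=\Fix(A)$, and $A*\Fix(A)=\{0\}$ for left nilpotency of class two. Two intermediate steps are carried out differently, though. For the index, the paper writes $|A|=kf$ and $m=lf$ with $k=l+1$ by Lagrange (using that $\Fix(A)$ is a subgroup of $(A,+)$), then uses $m\mid |A|$ to conclude $l\mid l+1$, hence $l=1$; you instead obtain the two divisibilities $\left|\Fix(A)\right|\mid m$ from Proposition~\ref{fix:pro} and $m\mid\left|\Fix(A)\right|$ from orbit--stabilizer combined with the class equation, which pin down $\left|\Fix(A)\right|=m$ at once. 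Both are rigorous; yours has the small advantage of reusing a stated proposition rather than an ad hoc computation. For the inclusion $A^2\subseteq\Fix(A)$, the paper first argues that $\Fix(A)$ is an ideal with $A/\Fix(A)\cong\Triv(\Z/2\Z)$, whereas you stay entirely inside $(A,+)$: the unique non-trivial orbit $\Lambda(x)$ is the non-trivial coset of the index-two subgroup $\Fix(A)$, so $\Lambda(x)-\Lambda(x)\subseteq\Fix(A)$, and Proposition~\ref{sizeA2:pro} then gives both the inclusion and, with its size bound, the equality. Your variant is slightly leaner, since it avoids verifying that $\Fix(A)$ is closed under $\circ$, normal in $(A,\circ)$, and $\lambda$-invariant; the trade-off is that it does not produce the ideal structure of $\Fix(A)$, a byproduct of the paper's proof that is quoted from this lemma later (e.g.\ in the proof of Theorem~\ref{1vertex:2odd:thm}), so it proves the stated lemma but not everything the paper subsequently attributes to it.
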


\begin{proof}
As $(A,\circ)$ acts on $(A,+)$ via the $\lambda$-action, we obtain, by the class equation, that 
\[
|A| = \left|\Fix(A)\right| + m, 
\]
 where $m$ denotes the number of elements in the unique non-trivial 
 vertex of the graph $\Lambda(A)$. Denote $f=\left|\Fix(A)\right|$ and $k=\left\lbrack A \colon \Fix(A) \right\rbrack$. Then $|A|=kf$. Hence there exists a positive integer $l$ such that $m=lf$. Using the class equation, $k=1+l$.
Since $m$ divides $|A|$, $l$ divides $k$. As this implies that $l$ divides $l+1$, 
this shows that $l=1$. Hence $m=f$ and $|A| = 2\left|\Fix(A)\right|$.
As the index of $\Fix(A)$ in $A$ is two, it follows that this is both an additive and multiplicative normal subgroup, so $\Fix(A)$ is an ideal and $A/\Fix(A) \cong \Triv(\mathbb{Z}/2\Z)$. In particular, $A^{2}\subseteq \Fix(A)$, 
which implies that $A^{3} \subseteq A*\Fix(A)=0$. This shows that $A$ is a left nilpotent brace. 
Moreover, $A^2\subseteq \Fix(A)$ and, since $A$ has a unique non trivial orbit of index two, 
by Proposition~\ref{sizeA2:pro}, $|A^2|\geq |A|/2=\left|\Fix(A)\right|$. Thus $A^2=\Fix(A)$.
\end{proof}

The following result is a direct consequence of
Lemma~\ref{1vertex:leftnilp:lem} and~\cite[Lemma 4.5]{MR3957824}.

\begin{pro}
Let $A$ be a finite skew brace. If $\Lambda(A)$ has exactly one vertex, then the additive group of $A^2$ is abelian and $A^2$ is a trivial skew brace.
\end{pro}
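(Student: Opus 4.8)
The plan is to read off both assertions from the structural information already isolated in Lemma~\ref{1vertex:leftnilp:lem}, together with the cited structure result for left nilpotent skew braces, so that no new machinery is introduced. First I would invoke Lemma~\ref{1vertex:leftnilp:lem} to record the two facts the argument actually uses: that $A$ is left nilpotent of class two, so $A^3=\{0\}$, and that $A^2=\Fix(A)$ is an ideal of index two. Everything below is deduced from these.

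For the claim that $A^2$ is a trivial skew brace I would argue directly. Since $A^2=\Fix(A)$, every $b\in A^2$ satisfies $\lambda_x(b)=b$ for all $x\in A$; in particular $\lambda_a(b)=b$ whenever $a\in A^2$. Using the identity $a\circ b=a+\lambda_a(b)$, this gives $a\circ b=a+b$ for all $a,b\in A^2$. Hence $\circ$ and $+$ agree on the ideal $A^2$, which is precisely the statement that $A^2$, with its inherited operations, is a trivial skew brace. Note that this part needs nothing beyond $A^2=\Fix(A)$.

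It remains to establish that $(A^2,+)$ is abelian, and this is the only step requiring input beyond Lemma~\ref{1vertex:leftnilp:lem}. Here I would feed the relation $A^3=\{0\}$ into \cite[Lemma 4.5]{MR3957824}: its hypothesis is exactly the terminating left series supplied by Lemma~\ref{1vertex:leftnilp:lem}, and its conclusion yields that the additive group of $A^2$ is abelian. I expect this to be the genuine obstacle, in the sense that the triviality above is formal whereas an ad hoc proof that $[A^2,A^2]_+=\{0\}$ would force one to unwind the skew brace identities relating additive commutators to the $*$-operation and the left series—precisely what \cite[Lemma 4.5]{MR3957824} packages. Accordingly I would not reprove it, but only check that the hypothesis $A^3=\{0\}$ matches and then cite the lemma; combining it with the previous paragraph gives that $A^2$ is a trivial skew brace of abelian type, as claimed.
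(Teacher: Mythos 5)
Your proposal is correct and takes essentially the same route as the paper: the paper gives no separate argument at all, stating the proposition to be a direct consequence of Lemma~\ref{1vertex:leftnilp:lem} together with \cite[Lemma 4.5]{MR3957824}, exactly the two ingredients you combine. Your only addition is to verify by hand the easy half (that $\circ$ and $+$ agree on $A^2=\Fix(A)$) while citing the same external lemma for abelianness of $(A^2,+)$, which is a harmless redistribution of what the citation already covers.
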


\begin{lem}
\label{1vertex:A/ker=Fix:lem}
Let $A$ be a finite skew brace. If $\Lambda(A)$ has exactly one vertex, 
then $(A/\ker \lambda,\circ)$ and $(\Fix(A),+)$ are isomorphic abelian groups.
Moreover, 
\[
x+f-x=-f
\]
for all $f\in\Fix(A)$ and $x\in A\setminus\Fix(A)$.
\end{lem}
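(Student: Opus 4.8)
The plan is to establish the two claims of Lemma~\ref{1vertex:A/ker=Fix:lem} using the structural information already extracted in Lemma~\ref{1vertex:leftnilp:lem}, namely that $A$ is left nilpotent of class two, $\Fix(A)=A^2$ is an ideal of index two, and $A/\Fix(A)\cong\Triv(\Z/2\Z)$. I would first settle the conjugation formula $x+f-x=-f$, since this is the more concrete assertion and its proof will feed into the group-isomorphism claim. Fix $f\in\Fix(A)$ and $x\in A\setminus\Fix(A)$. The key is to exploit that $f\in\Fix(A)$ means $\lambda_a(f)=f$ for all $a$, so $a\circ f=a+f$ for every $a\in A$; in particular the $\lambda$-action gives us tight control over how $\circ$ and $+$ interact on fixed elements. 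Writing out $\lambda_x(\text{something})$ and using $A*A\subseteq A^2=\Fix(A)$ together with $A^3=0$, I expect the additive commutator $[x,f]_+=x+f-x-f$ to be forced to equal $-2f$, i.e. $x+f-x=-f$. Concretely, since $f\in A^2$ and $A$ has class two, $x*f=\lambda_x(f)-f=0$, so the interaction must come from the additive (conjugation) side rather than the $\lambda$-side, and one must use that $x\notin\Fix(A)$ together with the index-two structure to pin down the sign.

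For the isomorphism $(A/\ker\lambda,\circ)\cong(\Fix(A),+)$, I would first argue that both groups are abelian. That $\Fix(A)=A^2$ is additively abelian is exactly the content of the preceding proposition (the additive group of $A^2$ is abelian), so I may invoke that directly. For $(A/\ker\lambda,\circ)$, the natural map sends $\bar a\mapsto\lambda_a$, embedding $A/\ker\lambda$ into $\Aut(A,+)$; since there is a single non-trivial $\lambda$-orbit, the image acts in a very restricted way, and I would show the group is abelian by analysing how two automorphisms $\lambda_a,\lambda_b$ act on the unique orbit. The plan is then to construct an explicit isomorphism. Pick $x\in A\setminus\Fix(A)$, so that $\Lambda(x)$ is the unique non-trivial orbit of size $f=|\Fix(A)|$. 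The orbit map $a\mapsto\lambda_a(x)$ descends to a bijection $A/\Stab(x)\to\Lambda(x)$ by the Orbit--Stabilizer theorem, and $|\Lambda(x)|=f=|\Fix(A)|$. The crucial point is to identify $\ker\lambda$ with $\Stab(x)$ (at least up to the relevant quotient) and to show that the orbit map, transported appropriately, is a \emph{group} homomorphism from $(A/\ker\lambda,\circ)$ onto $(\Fix(A),+)$.

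The main obstacle, and the step I would spend most effort on, is turning the orbit bijection into a genuine group isomorphism: the set-theoretic bijection $A/\Stab(x)\to\Lambda(x)$ is automatic, but matching the $\circ$-operation on the source with the $+$-operation on the target requires the compatibility relation together with the conjugation formula $x+f-x=-f$ proved in the first part. I anticipate that for $a,b\in A$ one computes $\lambda_{a\circ b}(x)=\lambda_a(\lambda_b(x))$, and using $\lambda_b(x)=x+(\text{element of }\Fix(A))$ together with $\lambda_a$ fixing $\Fix(A)$ pointwise and the sign rule $x+f-x=-f$, one rewrites this additively to show the orbit elements add rather than compose multiplicatively in the expected way. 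Establishing $\ker\lambda=\Stab(x)$ deserves care: the inclusion $\ker\lambda\subseteq\Stab(x)$ is clear, and for the reverse one uses that $\Fix(A)$ has index two so that an element stabilising the single non-trivial orbit representative $x$ must act trivially on all of $A$. Once these identifications are in place, counting via $|A/\ker\lambda|=|\Lambda(x)|=|\Fix(A)|$ upgrades the surjective homomorphism to an isomorphism, completing the proof.
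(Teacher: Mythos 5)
Your plan for the isomorphism (the second half of your proposal) is essentially the paper's argument: one takes the map $\overline{a}\mapsto -x+\lambda_a(x)$, notes $\Lambda(x)=x+\Fix(A)$ by counting, gets injectivity from $\Stab(x)=\ker\lambda$ (equivalently $|\ker\lambda|\leq 2$), and concludes surjectivity by comparing sizes. That part is sound. The genuine gap is your first part, the proof of $x+f-x=-f$. The facts you propose to use --- $x*f=\lambda_x(f)-f=0$, $A^3=0$, $A^2=\Fix(A)$ of index two --- cannot yield this formula: $x*f=0$ is literally the definition of $f\in\Fix(A)$ and says nothing about the additive commutator $[x,f]_+$, and left nilpotency places no constraint on conjugation in $(A,+)$. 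Saying the sign is ``pinned down'' by the index-two structure is not an argument; no mechanism is given that produces the minus sign, and nothing in your sketch uses the single-orbit hypothesis beyond what Lemma~\ref{1vertex:leftnilp:lem} already extracted.

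The missing idea is exactly where the one-vertex hypothesis enters a second time: since $\Lambda(x)=x+\Fix(A)$, \emph{every} element $x+f$ with $f\in\Fix(A)$ is of the form $\lambda_a(x)$ for some $a\in A$. Now $x+x\in\Fix(A)$ (index two), so $\lambda_a$ fixes it, and since $\lambda_a$ is an additive automorphism,
\[
x+x=\lambda_a(x+x)=\lambda_a(x)+\lambda_a(x)=(x+f)+(x+f),
\]
which cancels to $x+f-x=-f$. Note that this reverses the logical order of your proposal: the conjugation formula is a \emph{consequence} of the orbit--coset identification (the surjectivity underlying the isomorphism), not an input to it. Your worry that the homomorphism property of the orbit map needs the sign rule is also misplaced: writing $\lambda_b(x)=x+H(\overline{b})$ with $H(\overline{b})\in\Fix(A)$, one gets $H(\overline{a\circ b})=-x+\lambda_a(x+H(\overline{b}))=-x+\lambda_a(x)+H(\overline{b})=H(\overline{a})+H(\overline{b})$, using only that $\lambda_a$ is additive and fixes $\Fix(A)$ pointwise. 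So your second half can be completed without the formula, but as written your proposal leaves the formula itself unproven, and the route you suggest for it would fail.
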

\begin{proof}
  Let $x\in A\setminus\Fix(A)$. Then $\Lambda(x)$ is the unique non-trivial $\lambda$-orbit of $A$.
  By Lemma~\ref{1vertex:leftnilp:lem}, $\left|\Fix(A)\right|=\left|\Lambda(x)\right|=|A|/2$ and so $\Lambda(x)=x+\Fix(A)$.
   Thus, denoting 
   \[
   \Aut_{\Fix(A)}(A,+)=\big\{\varphi\in\Aut(A,+)\colon \varphi|_{\Fix(A)}=\id\big\},
   \]
   where 
   $\varphi|_{\Fix(A)}$ is the restriction 
   of $\varphi$ on $\Fix(A)$, 
   we have a well-defined map 
   \[
       h\colon \Aut_{\Fix(A)}(A,+)\to \Fix(A),\quad 
       \varphi\mapsto -x+\varphi(x). 
   \]
   Indeed, if $\varphi|_{\Fix(A)}=\id$
   then $\varphi(x)\in\Lambda(x)=x+\Fix(A)$
   and $-x+\varphi(x)\in\Fix(A)$.
    Moreover, for all $\varphi,\psi\in \Aut_{\Fix(A)}(A,+)$, 
   \[
   h(\varphi\psi)=-x+\varphi(\psi(x))=-x+\varphi(x-x+\psi(x))=-x+\varphi(x)+\varphi(-x+\psi(x)),
   \]
   and $-x+\psi(x)\in\Fix(A)$ implies  that $\varphi(-x+\psi(x))=-x+\psi(x)$. Thus
   \[
   h(\varphi\psi)=-x+\varphi(x)-x+\psi(x)=h(\varphi)+h(\psi).
   \]
   and $h$ is a group homomorphism. It is also injective:
   $h(\varphi)=h(\psi)$ if and only if $\varphi(x)=\psi(x)$ and,
   since $\varphi|_{\Fix(A)}=\id=\psi|_{\Fix(A)}$,
   then $\varphi=\psi$.

  On the other hand, $\lambda$ induces an isomorphism between
  $(A/\ker \lambda,\circ)$  and a subgroup of $\Aut_{\Fix(A)}(A,+)$.
  Therefore $(A/\ker\lambda,\circ)$ is isomorphic to a subgroup of $\Fix(A)$.
  Since $\left|\ker\lambda\right|\leq 2,$ then $\left|\Fix(A)\right|=|A|/2$ divides $|A|/\left|\ker\lambda\right|$.
  Thus we obtain an isomorphism 
  \begin{align*}
      H\colon (A/\ker\lambda,\circ)\to (\Fix(A),+),\quad 
      \overline{a}\mapsto -x+\lambda_a(x).
  \end{align*}
  Finally, $x+x\in\Fix(A),$ as $\Fix(A)$ has index two in $A$.  Hence, for every element $f\in \Fix(A)$, there is a unique $\overline{a}\in A/\ker\lambda$ such that $f=H(\overline{a})$ and
  \[
x+x=\lambda_a(x+x)=\lambda_a(x)+\lambda_a(x)=x+H(\overline{a})+x+H(\overline{a})=x+f+x+f,
  \]
  hence $x+f-x=-f$.
\end{proof}

The following result easily follows, where it is used that every normal subgroup of order 2 is central.

\begin{cor}
\label{1vertex:kercentral:cor}
     Let $A$ be a finite skew brace. If $\Lambda(A)$ has exactly one vertex, then
     $\ker\lambda$
     has order 2 and $(A/\ker\lambda,\circ)$ is abelian. 
     In particular, $\ker\lambda\subseteq Z(A,\circ)$.
\end{cor}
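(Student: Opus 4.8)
The final statement to prove is Corollary~\ref{1vertex:kercentral:cor}: if $\Lambda(A)$ has exactly one vertex, then $\ker\lambda$ has order $2$, the quotient $(A/\ker\lambda,\circ)$ is abelian, and $\ker\lambda \subseteq Z(A,\circ)$.

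My plan is to read off almost everything directly from Lemma~\ref{1vertex:A/ker=Fix:lem} and its proof. That lemma establishes an isomorphism $(A/\ker\lambda,\circ)\cong(\Fix(A),+)$; since $(\Fix(A),+)$ is abelian (it is a subgroup of index two, and the lemma's proof shows $(A/\ker\lambda,\circ)$ embeds into $(\Fix(A),+)$, forcing the former to be abelian), the quotient $(A/\ker\lambda,\circ)$ is abelian. For the order of $\ker\lambda$: Lemma~\ref{1vertex:leftnilp:lem} gives $|\Fix(A)|=|A|/2$, and the isomorphism $(A/\ker\lambda,\circ)\cong(\Fix(A),+)$ gives $|A|/|\ker\lambda| = |A|/2$, so $|\ker\lambda|=2$ immediately. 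Thus the first two assertions are essentially restatements of what has already been proved.

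The only genuinely new content is the claim $\ker\lambda\subseteq Z(A,\circ)$, and the remark preceding the corollary already signals the intended argument: every normal subgroup of order $2$ is central. So the key steps are, first, to observe that $\ker\lambda$ is a normal subgroup of $(A,\circ)$ — this is immediate since $\lambda$ is a group homomorphism from $(A,\circ)$ and $\ker\lambda$ is its kernel, hence normal. Second, invoke the standard group-theoretic fact that a normal subgroup $N$ of order $2$ lies in the center: if $N=\{e,n\}$ is normal, then for any $g$, conjugation $g n g^{-1}\in N$ and $gng^{-1}\neq e$ (as $n\neq e$ and conjugation is injective), forcing $gng^{-1}=n$, so $n$ is central. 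Applying this with $N=\ker\lambda$ (now known to have order $2$) in the group $(A,\circ)$ yields $\ker\lambda\subseteq Z(A,\circ)$.

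There is essentially no obstacle here; the corollary is a routine packaging of Lemma~\ref{1vertex:A/ker=Fix:lem} together with the order-$2$-normal-implies-central lemma. If anything requires a word of care, it is being explicit that $(A/\ker\lambda,\circ)$ is abelian: one must note that the map $H$ constructed in Lemma~\ref{1vertex:A/ker=Fix:lem} is a group isomorphism onto the abelian group $(\Fix(A),+)$, so abelianness transfers. I would therefore write the proof in three short sentences, citing Lemma~\ref{1vertex:A/ker=Fix:lem} for the order and abelianness, and then deploying the normal-subgroup-of-order-two argument for centrality.
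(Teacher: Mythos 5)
Your proof is correct and is essentially the paper's own argument: the paper gives no written proof of this corollary, saying only that it ``easily follows'' from Lemma~\ref{1vertex:A/ker=Fix:lem} together with the fact that every normal subgroup of order $2$ is central, which is exactly the structure of your argument (including deducing $\left|\ker\lambda\right|=2$ by combining the isomorphism $(A/\ker\lambda,\circ)\cong(\Fix(A),+)$ with $\left|\Fix(A)\right|=|A|/2$ from Lemma~\ref{1vertex:leftnilp:lem}, and the standard conjugation argument for centrality). One small caveat: your parenthetical justification that $(\Fix(A),+)$ is abelian ``because it is a subgroup of index two'' is a non sequitur (index two gives normality, not commutativity), but this is harmless, since Lemma~\ref{1vertex:A/ker=Fix:lem} explicitly asserts that $(A/\ker\lambda,\circ)$ and $(\Fix(A),+)$ are isomorphic \emph{abelian} groups, so citing the lemma's statement already suffices.
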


\begin{thm}
\label{1vertex:2odd:thm}
    Let $A$ be a finite skew brace such that $\Lambda(A)$ has exactly one vertex.
    If $\ker\lambda\cap\Fix(A)=\{0\}$,
    then $A$ is isomorphic to the skew brace 
    on $\Fix(A)\times \Z/2\Z$ with operations
    \begin{equation}
    \label{eq:operations}
    \begin{aligned}
    (f_1,k_1)+(f_2,k_2)&=\big(f_1+(-1)^{k_1}f_2,k_1+k_2\big),\\
    (f_1,k_1)\circ(f_2,k_2)&=\big((-1)^{k_2}f_1+(-1)^{k_1}f_2,k_1+k_2\big).
    \end{aligned}
    \end{equation}
    Moreover, $\Fix(A)$ is a trivial skew brace of abelian type of odd order. 
    
    Conversely, 
    if $F$ is a non-trivial abelian group of odd order, then the set $A=F\times \Z/2\Z$ with operations
    \eqref{eq:operations} is a skew brace such that 
    $\Fix(A)=F\times\{0\}$, $\ker\lambda=\{0\}\times\Z/2\Z$ and 
    $\Lambda(A)$ has exactly one vertex.
\end{thm}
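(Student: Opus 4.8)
The plan is to prove both implications, treating the forward direction (that such an $A$ is isomorphic to the displayed skew brace) as the substantial part and the converse as a direct verification. For the forward direction I would first assemble the structural information already available. By Lemma~\ref{1vertex:leftnilp:lem}, $F:=\Fix(A)=A^2$ has index two, and the proposition following it shows $F$ is an abelian trivial (sub-)skew-brace; by Corollary~\ref{1vertex:kercentral:cor}, $\ker\lambda=\{0,z\}$ is central of order two in $(A,\circ)$. Since $z\circ z=z+\lambda_z(z)=z+z$ and $z$ has $\circ$-order two, I get $z+z=0$, so $z$ also has additive order two; and by hypothesis $z\notin F$. Feeding $x=z$ into Lemma~\ref{1vertex:A/ker=Fix:lem} gives $z+f-z=-f$ for all $f\in F$, so $(A,+)$ is the generalized dihedral group $F\rtimes\langle z\rangle$ (in particular every element of the non-trivial coset $z+F$ is an involution).

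The first genuinely new step is to show $|F|$ is odd. Suppose $w\in F$ with $2w=0$. Using $\lambda_z=\id$ and centrality of $z$ in $(A,\circ)$ I compute $w\circ z=z\circ w=z+w$, whence
\[
\lambda_w(z)=-w+w\circ z=-w+(z+w)=-w+(-w+z)=-2w+z=z,
\]
where the third equality uses $z+w=-w+z$. Since $\lambda_w$ also fixes $F$ pointwise (as $F=\Fix(A)$) and $A$ is additively generated by $F$ and $z$, this forces $\lambda_w=\id$, i.e. $w\in\ker\lambda\cap F=\{0\}$. Hence $F$ has no $2$-torsion and $|F|$ is odd; this also yields the ``moreover'' clause, since the unique orbit has size $|F|=|A|/2>1$, so $F$ is non-trivial.

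To produce the isomorphism I would set $\phi\colon F\times\Z/2\Z\to A$, $\phi(f,k)=f+kz$ (reading $0\cdot z=0$, $1\cdot z=z$). That $\phi$ is an additive isomorphism follows from the generalized-dihedral description together with $z+f=-f+z$, checking the cases $k_1\in\{0,1\}$. For the multiplicative operation the key device is the homomorphism $\mu\colon(A,\circ)\to(F,+)$, $\mu_a=-z+\lambda_a(z)$, which is exactly the isomorphism of Lemma~\ref{1vertex:A/ker=Fix:lem} for $x=z$ composed with the quotient by $\ker\lambda$; then $\lambda_a$ is determined by fixing $F$ pointwise and sending $z\mapsto z+\mu_a$. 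A short computation as above gives $\mu_f=2f$ for $f\in F$, and since $f+z=(-f)\circ z$ one deduces $\mu_{f+kz}=(-1)^k\,2f$. With this explicit description of $\lambda$, verifying $\phi((f_1,k_1)\circ(f_2,k_2))=\phi(f_1,k_1)\circ\phi(f_2,k_2)$ reduces to evaluating $a\circ b=a+\lambda_a(b)$ in the four cases $(k_1,k_2)\in\{0,1\}^2$ and matching with $((-1)^{k_2}f_1+(-1)^{k_1}f_2,\,k_1+k_2)$; each case is a direct manipulation in $F\rtimes\langle z\rangle$ using that $2$ is invertible on $F$.

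For the converse I would start from the explicit $A=F\times\Z/2\Z$: associativity of $\circ$ and the inverses $(f,k)'=(-f,k)$ are immediate, and the compatibility condition is equivalent to each $\lambda_{(f_1,k_1)}$ being additive, which one checks after computing $\lambda_{(f_1,k_1)}(f_2,k_2)=((-1)^{k_1}(-1+(-1)^{k_2})f_1+f_2,\,k_2)$. From this formula the remaining claims read off: the coefficient vanishes exactly when $k_2=0$, giving $\Fix(A)=F\times\{0\}$; it forces $2f_1=0$, hence $f_1=0$, giving $\ker\lambda=\{0\}\times\Z/2\Z$; and the orbit of any $(f,1)$ is $\{(f-2(-1)^jg,1)\}=F\times\{1\}$ because $2F=F$, so there is exactly one non-trivial orbit. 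I expect the main obstacle to be the forward direction's bookkeeping: proving $|F|$ odd and, above all, transporting the multiplicative structure into the additive coordinates $f+kz$ so that the four-case check of the $\circ$-operation goes through cleanly, since the mismatch between the additive decomposition used for $\phi$ and the $\circ$-decomposition underlying $\mu$ is where care is needed.
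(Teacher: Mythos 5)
Your proposal is correct and follows essentially the same route as the paper's proof: the same preliminary lemmas, the same coordinates $a=f+kz$ with $z$ the non-zero element of $\ker\lambda$, the same oddness argument (a $2$-torsion element of $\Fix(A)$ is forced into $\ker\lambda\cap\Fix(A)=\{0\}$), and the same converse computation of the $\lambda$-action. The only divergence is bookkeeping: you transport the multiplication via $a\circ b=a+\lambda_a(b)$ and the homomorphism $\mu_a=-z+\lambda_a(z)$ coming from Lemma~\ref{1vertex:A/ker=Fix:lem}, whereas the paper expands $(f_a+k_ax)\circ(f_b+k_bx)$ directly using skew left distributivity, $\lambda$-invariance of $\Fix(A)$, and centrality of $\ker\lambda$ in $(A,\circ)$; both rest on the same ingredients and yield the same formula.
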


\begin{proof}
Since $\ker\lambda\cap\Fix(A)=\{0\}$, the only non-trivial orbit of $A$ is $\Lambda(x)$ 
for any $x\in\ker\lambda\setminus\{0\}$.
By Lemma~\ref{1vertex:leftnilp:lem}, $\Fix(A)$ is an ideal of $A$.

Since $\left|\ker\lambda\right|\left|\Fix(A)\right|=|A|$ and $\left|\ker\lambda\right|=2$, 
we have a direct decomposition 
 \[
 (A,\circ)=\Fix(A)\circ\ker\lambda\cong (\Fix(A),+)\times\Z/2\Z
 \]
 of $(A,\circ)$,  
 and a semidirect decomposition 
 \[
 (A,+)=\Fix(A)+\ker\lambda\cong (\Fix(A),+)\rtimes\Z/2\Z
 \]
 of $(A,+)$ with the property that $x+f-x=-f$ for all $f\in\Fix(A)$, as shown in 
 Lemma ~\ref{1vertex:A/ker=Fix:lem}.
 For every $a,b\in A$ there are unique $(f_a,k_a),(f_b,k_b)\in \Fix(A)\times \Z/2\Z$ 
 such that $a=f_a+k_ax,b=f_b+k_bx$; their sum is
 \begin{equation}
 \label{additivelaw:eq}
     a+b=f_a+k_ax+f_b+k_bx=f_a+(-1)^{k_a}f_b+(k_a+k_b)x.
 \end{equation}

 By skew left distributivity and since $f_b\in\Fix(A)$ and, by Corollary~\ref{1vertex:kercentral:cor}, $\ker\lambda\subseteq Z(A,\circ)$
,
     \begin{align*}
         a\circ b 
         &=(f_a+k_ax)\circ(f_b+k_bx)\\
         &=(f_a+k_ax)\circ f_b -(f_a+k_ax)+ (f_a+k_ax)\circ k_bx\\
         &=f_a+k_ax+f_b-k_ax-f_a+k_bx\circ(f_a+k_ax).
     \end{align*}
By Equation \eqref{additivelaw:eq} and skew left distributivity,
this is further equal to
\[
f_a+(-1)^{k_a}f_b+k_ax-k_ax-f_a+k_bx\circ f_a-k_bx+k_bx\circ k_ax.
\]
In turn, as $k_bx\in\ker\lambda$ and $f_a\in \Fix(A)$, this is equal to
\[
(-1)^{k_a}f_b+k_bx+f_a+k_ax=(-1)^{k_a}f_b+(-1)^{k_b}f_a+(k_a+k_b)x.
\]
 Hence the isomorphism follows.
 
 Moreover, $\left|\Fix(A)\right|$ is odd, otherwise there would be an $f\in\Fix(A)\setminus\{0\}$ of order two. Thus, for every $b\in A$
 \[
 \lambda_f(b)=-f+f\circ(f_b+k_bx)=-f+f_b+(-1)^kf_b+k_bx=f_b+k_bx=b,
 \]
 a contradiction, as $f\in \Fix(A)\cap\ker\lambda=\{0\}$.
 
Let now $(F,+)$ be an abelian group of odd order. The $\lambda$-action of $A$ 
is given by
     \begin{align*}
         \lambda_{(f_1,k_1)}(f_2,k_2)&=-(f_1,k_1)+(f_1,k_1)\circ(f_2,k_2)\\
         &=\big((-1)^{k_1+1}f_1,k_1\big)+\big((-1)^{k_2}f_1+(-1)^{k_1}f_2,k_1+k_2\big)\\
         &=\big((-1)^{k_1+1}f_1+(-1)^{k_1+k_2}f_1+(-1)^{k_1+k_1}f_2,k_2\big)\\
         &=\big((-1)^{k_1}(-1+(-1)^{k_2})f_1+f_2,k_2\big).         
     \end{align*}
Then $(f_2,k_2)\in\Fix(A)$ if and only if $(-1)^{k_1}(-1+(-1)^{k_2})f_1=0$ in $F$ for all $(f_1,k_1)\in F\times\Z/2\Z$.  
Since $F$ has odd order, we conclude that $\Fix(A)=F\times\{0\}$.
Moreover, $(f_1,k_1)\in\ker\lambda$ if and only if $(-1)^{k_1}(-1+(-1)^{k_2})f_1=0$ in $F$ for all $(f_2,k_2)\in F\times\Z/2\Z$.
Hence, since $|F|$ is odd, $\ker\lambda=\{0\}\times\Z/2\Z$.
Finally
\[
\Lambda(0,1)=\left\{\big(-2(-1)^{k}f,1\big)\colon (f,k)\in F\times\Z/2\Z\right\}=F\times\{1\}
\]
and thus $\Lambda(A)$ has exactly one vertex.
\end{proof}

\begin{rem}
\label{rem:1vertex:2odd:iso}
    Given a non-trivial abelian group $F$ of odd order $d$,
    we will denote the skew brace constructed in this theorem by $D_{2d}(F)$.
    Note that two non-trivial abelian groups $F$ and $F'$
    of the same odd order $d$ are isomorphic if and only if
    $D_{2d}(F)$ and $D_{2d}(F')$ are isomorphic skew braces.
\end{rem}

We end the section with a theorem that allows inductive reasoning.

\begin{thm}
\label{1vertex:2even:thm}
    Let $A$ be a finite skew brace such that $\Lambda(A)$ has exactly one vertex.
    If $\ker\lambda\subseteq\Fix(A)$, then 
    $|A|$ is divisible by $4$. Moreover, 
    either $|A|=4$ 
    or $A/\Soc(A)$ is a skew brace with abelian multiplicative group and $\Lambda(A/\Soc(A))$ has exactly one 
    vertex. 
 \end{thm}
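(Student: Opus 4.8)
The plan is to first identify $\Soc(A)$ precisely, then read off both conclusions. Since $\Lambda(A)$ has exactly one vertex, Lemma~\ref{1vertex:leftnilp:lem} gives that $\Fix(A)=A^2$ has index two, $A$ is left nilpotent of class two, and (recall) $A^2$ is a trivial skew brace of abelian type; moreover $\ker\lambda$ has order two by Corollary~\ref{1vertex:kercentral:cor}, so write $\ker\lambda=\{0,z\}$. The hypothesis $\ker\lambda\subseteq\Fix(A)$ places $z$ inside $A^2$, where $+$ and $\circ$ coincide, so $z+z=z\circ z=0$; hence $z$ has additive order two and $-z=z$. To see that $z\in Z(A,+)$ I would split into the two cosets of $\Fix(A)$: additive commutativity with elements of $\Fix(A)$ is automatic since $(A^2,+)$ is abelian, while for $x\notin\Fix(A)$ the relation $x+f-x=-f$ of Lemma~\ref{1vertex:A/ker=Fix:lem} applied to $f=z$ yields $x+z-x=-z=z$. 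Thus $z$ is additively central, so $\ker\lambda\subseteq Z(A,+)$ and therefore $\Soc(A)=\ker\lambda\cap Z(A,+)=\ker\lambda$, of order two. The divisibility is then immediate: by Lemma~\ref{1vertex:leftnilp:lem} we have $|A|=2\,\lvert\Fix(A)\rvert$, and since the order-two group $\ker\lambda$ lies in $\Fix(A)$, Lagrange gives $2\mid\lvert\Fix(A)\rvert$, whence $4\mid|A|$.

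For the second assertion, set $\overline{A}=A/\Soc(A)=A/\ker\lambda$ with quotient map $\pi$; this is a skew brace because $\Soc(A)$ is an ideal, and $(\overline{A},\circ)$ is abelian by Corollary~\ref{1vertex:kercentral:cor}, which already gives the abelian multiplicative group. I would then use that the induced action satisfies $\lambda^{\overline{A}}_{\bar a}(\bar b)=\overline{\lambda_a(b)}$, so that the $\lambda$-orbit of $\bar b$ in $\overline{A}$ is exactly $\pi(\Lambda_A(b))$. The unique non-trivial orbit of $A$ is $N=A\setminus\Fix(A)$, of size $|A|/2$. Every $f\in\Fix(A)$ satisfies $\Lambda_A(f)=\{f\}$, so $\bar f$ is fixed in $\overline{A}$; and for any $x\in N$ the orbit of $\bar x$ is $\pi(N)$. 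Since $\Soc(A)\subseteq\Fix(A)$, the images $\overline{\Fix(A)}$ and $\pi(N)$ are disjoint and partition $\overline{A}$, with $\overline{\Fix(A)}$ of index two, so $\pi(N)=\overline{A}\setminus\overline{\Fix(A)}$ has size $|A|/4$. Consequently the only candidate non-trivial orbit of $\overline{A}$ is $\pi(N)$: if $|A|=4$ it is a singleton and we are in the excluded case, while if $|A|>4$ it has size $|A|/4>1$, so $\Lambda(\overline{A})$ has exactly one vertex.

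The main obstacle is the first step, namely the identification $\Soc(A)=\ker\lambda$: this requires simultaneously that the generator $z$ has additive order two and that it is additively central, and both facts rest on the detailed structure extracted in Lemmas~\ref{1vertex:leftnilp:lem} and~\ref{1vertex:A/ker=Fix:lem} (in particular the twisting relation $x+f-x=-f$). Once $\Soc(A)=\ker\lambda$ is established, the divisibility and the quotient orbit count are short bookkeeping, since the clean partition of $\overline{A}$ into $\overline{\Fix(A)}$ and $\pi(N)$ follows directly from $\Soc(A)$ being contained in the index-two subgroup $\Fix(A)$.
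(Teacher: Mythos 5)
Your proof is correct, and while it shares the paper's pivot --- identifying $\Soc(A)=\ker\lambda$ and then passing to the quotient $\overline{A}=A/\Soc(A)$ --- the way you reach both conclusions is genuinely different. For centrality of $\ker\lambda=\{0,z\}$ you assemble three facts: $z+z=0$ (from $A^2=\Fix(A)$ being a trivial skew brace; note this is the unnamed proposition following Lemma~\ref{1vertex:leftnilp:lem}, not the lemma itself), commutation with $\Fix(A)$ (abelianness of $(A^2,+)$), and commutation with $A\setminus\Fix(A)$ (the relation $x+f-x=-f$ of Lemma~\ref{1vertex:A/ker=Fix:lem}); the paper gets the same in one line, $k+a=k\circ a=a\circ k=a+k$, from $\ker\lambda\subseteq Z(A,\circ)$ (Corollary~\ref{1vertex:kercentral:cor}) together with the hypothesis $\ker\lambda\subseteq\Fix(A)$. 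The larger divergence is in the second half: the paper argues by cases on $[\overline{A}\colon\Fix(\overline{A})]\in\{1,2\}$, invoking the isomorphism $H\colon(A/\ker\lambda,\circ)\to(\Fix(A),+)$ of Lemma~\ref{1vertex:A/ker=Fix:lem} in the index-one case to force $|A|=4$, so that divisibility by $4$ only emerges from the case analysis. You instead obtain $4\mid|A|$ unconditionally by Lagrange ($\{0,z\}$ is an additive subgroup of order two inside $\Fix(A)$, which has order $|A|/2$), and you resolve the dichotomy purely by size: your exact identity $\Lambda_{\overline{A}}(\bar b)=\pi(\Lambda_A(b))$ for the quotient map $\pi$ (the paper only uses the containment $\supseteq$) shows that the unique candidate non-trivial orbit $\pi(N)$, $N=A\setminus\Fix(A)$, has cardinality $|A|/4$, hence is a vertex exactly when $|A|>4$. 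Your route buys a shorter, case-free divisibility argument that never needs the isomorphism $H$; the paper's buys a more economical centrality computation. (Incidentally, $z+z=0$ also follows at once without the trivial-skew-brace proposition: $z\circ z=z+\lambda_z(z)=z+z$ since $z\in\ker\lambda$, and $z\circ z=0$ since $\ker\lambda$ has order two in $(A,\circ)$.) Both proofs read off the abelian multiplicative group of $A/\Soc(A)$ from Corollary~\ref{1vertex:kercentral:cor}.
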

 
\begin{proof}
Since $\ker \lambda\subseteq\Fix(A)$ and, by Corollary~\ref{1vertex:kercentral:cor} $\ker\lambda\subseteq Z(A,\circ)$, 
\[
k+a=k\circ a=a\circ k=a+k,
\]
for all $k\in\ker\lambda$ and $a\in A$. 
Hence $\Soc(A)=\ker\lambda\subseteq\Fix(A)$.
Let $\Lambda(x)$ be the only non-trivial $\lambda$-orbit of $A$ and
$\overline{A}=A/\Soc(A)=A/\ker\lambda$. Then 
$\overline{A}$ has order $|A|/2$ and $\left\lbrack\overline{A}\colon\Fix(\overline{A})\right\rbrack\leq2$.

If $\left\lbrack\overline{A}\colon\Fix(\overline{A})\right\rbrack=1$,
then $\overline{A}$ is a trivial skew brace. 
By Lemma ~\ref{1vertex:A/ker=Fix:lem}, we have a group isomorphism $H\colon(\overline{A},\circ)\to (\Fix(A),+)$
defined by $H(\overline{a})=-x+\lambda_a(x)$.
But $\overline{A}$ is trivial, thus $H(\overline{a})=-x+\lambda_a(x)\in \Soc(A)$. 
Hence $\Fix(A)=\Soc(A)$ and so $|A|/2=\left|\Fix(A)\right|=\left|\Soc(A)\right|=2$. Therefore $|A|=4$.

If $\left\lbrack\overline{A}\colon\Fix(\overline{A})\right\rbrack=2$,
then $\left|\Fix(\overline{A})\right|=|\overline{A}|/2$. Hence 4 divides $|A|$. 
Moreover $\Lambda(\overline{A})$ has only one vertex because 
$\Lambda_{\overline{A}}(\overline{x})\supseteq \overline{\Lambda(x)}$.
\end{proof}

\section{Skew braces with one-vertex \texorpdfstring{$\lambda$}{lambda}-graphs: Abelian type}

\begin{lem}
\label{1vertex:nodecomp:lem}
Let $B$ be a finite skew brace of abelian type. If $\Lambda(B)$ has exactly one vertex, then there exists no decomposition $B=B_1\times B_2$ with $B_1$ and $B_2$ non-zero skew braces.
\end{lem}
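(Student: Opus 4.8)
The plan is to argue by contradiction: suppose $B=B_1\times B_2$ with both factors non-zero, and exploit the fact that in a direct product both the fixed-point set and the individual $\lambda$-orbits split as products. The whole proof then reduces to an orbit count.

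First I would record the relevant product formulas. Since all operations in $B_1\times B_2$ are componentwise, the $\lambda$-action satisfies $\lambda_{(a_1,a_2)}(b_1,b_2)=(\lambda_{a_1}(b_1),\lambda_{a_2}(b_2))$, and because $(B,\circ)=(B_1,\circ)\times(B_2,\circ)$ the two coordinates of the acting element range independently. This gives $\Lambda_B((b_1,b_2))=\Lambda_{B_1}(b_1)\times\Lambda_{B_2}(b_2)$ and, taking $b_i$ fixed in each factor, $\Fix(B)=\Fix(B_1)\times\Fix(B_2)$.

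Next I would invoke Lemma~\ref{1vertex:leftnilp:lem}, which yields $[B:\Fix(B)]=2$. Multiplicativity of the index across the product forces $[B_1:\Fix(B_1)]\cdot[B_2:\Fix(B_2)]=2$, so exactly one factor, say $B_2$, satisfies $\Fix(B_2)=B_2$, while $[B_1:\Fix(B_1)]=2$. The equality $\Fix(B_2)=B_2$ means $\lambda$ acts trivially on $B_2$, so each orbit $\Lambda_{B_2}(b_2)$ is the singleton $\{b_2\}$, whereas $[B_1:\Fix(B_1)]=2>1$ guarantees that $B_1$ has at least one non-trivial orbit.

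Finally I would count the non-trivial $\lambda$-orbits of $B$. By the product formula they are exactly the sets $O\times\{b_2\}$ with $O$ a non-trivial orbit of $B_1$ and $b_2\in B_2$, and distinct choices of $b_2$ yield disjoint (hence distinct) orbits. Thus $B$ has at least $|B_2|\geq 2$ non-trivial $\lambda$-orbits, contradicting the hypothesis that $\Lambda(B)$ has exactly one vertex. The only substantive step is the orbit/fixed-point product formula; once it is in place, the index multiplicativity and the counting are routine, and I expect no real obstacle beyond checking these splittings carefully. Note the abelian-type hypothesis is used only to remain within the standing setting of the section; the argument itself does not require it.
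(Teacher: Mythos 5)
Your proof is correct, and it differs from the paper's argument in one meaningful step. Both proofs rest on the same mechanism: in a direct product the $\lambda$-action splits componentwise, so a non-trivial orbit in one factor paired with two different fixed elements of the other factor produces two distinct non-trivial $\lambda$-orbits of $B$, contradicting the one-vertex hypothesis. The paper obtains the two fixed elements from the left-nilpotency conclusion of Lemma~\ref{1vertex:leftnilp:lem}: $B$ is left nilpotent, hence so is the factor $B_1$, and a non-zero finite left nilpotent skew brace has a non-zero fixed element (the last non-zero term $B_1^m$ of the left series satisfies $B_1*B_1^m=0$, so it lies in $\Fix(B_1)$) --- a fact the paper leaves implicit. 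You instead use the index-two conclusion of the same lemma: $\left\lbrack B:\Fix(B)\right\rbrack=2$ together with $\Fix(B)=\Fix(B_1)\times\Fix(B_2)$ and multiplicativity of the index forces one factor, say $B_2$, to satisfy $\Fix(B_2)=B_2$, after which the $|B_2|\geq 2$ pairwise disjoint orbits $O\times\{b_2\}$ give the contradiction. Your route is a pure counting argument: it avoids the nilpotency inheritance and the implicit fixed-point fact, and it yields the slightly stronger structural conclusion that the $\lambda$-action is trivial on an entire factor, at the cost of needing the full fixed-point product formula rather than just two chosen fixed elements. You are also right that neither argument actually uses the abelian-type hypothesis; the paper's proof likewise works for arbitrary finite skew braces.
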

\begin{proof}
Assume such a decomposition $B=B_1\times B_2$ does exist. Since $\Lambda(B)$ has one vertex, 
$B_1$  or $B_2$ is a non-trivial skew brace. Without loss of generality, we may assume $B_2$ is non-trivial. By Lemma 
~\ref{1vertex:leftnilp:lem}, $B$ is left nilpotent. It follows that also $B_1$ is left nilpotent. Hence there exist $a,b \in \Fix(B_1)$ with $a\ne b$. Let $c \in B_2$ be such that $c \in B\setminus \Fix(B_2)$. Then 
\[
\Lambda_B(a,c)= \left\lbrace a\right\rbrace \times \Lambda_{B_2}(c) \neq \left\lbrace b\right\rbrace \times \Lambda_{B_2}(c) = \Lambda_B(b,c).
\]
Thus $\Lambda(B)$ has two or more vertices, a contradiction.
\end{proof}

\begin{pro}
\label{1vertex:2brace:pro}
Let $B$ be a finite skew brace of abelian type. If $\Lambda(B)$ has exactly one vertex, then $B$ has size $2^m$ for some $m$. 
\end{pro}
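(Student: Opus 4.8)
The plan is to extract the result directly from the twisted conjugation identity recorded in Lemma~\ref{1vertex:A/ker=Fix:lem}, with no case distinction. That lemma applies since $\Lambda(B)$ has one vertex, and it states that
\[
x+f-x=-f
\]
for every $f\in\Fix(B)$ and every $x\in B\setminus\Fix(B)$. Such an element $x$ exists: by Lemma~\ref{1vertex:leftnilp:lem} the subgroup $\Fix(B)$ has index two in $B$, and the graph is non-empty, so $B\setminus\Fix(B)\neq\emptyset$.

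Now I would invoke the hypothesis that $B$ is of abelian type, i.e. $(B,+)$ is abelian. Commutativity gives $x+f-x=f$, and comparing with the identity above yields $f=-f$, that is $2f=0$, for every $f\in\Fix(B)$. Hence the finite abelian group $(\Fix(B),+)$ has exponent dividing two, so $\left|\Fix(B)\right|=2^{k}$ for some $k\geq 0$. Since $\Fix(B)$ has index two, $|B|=2\left|\Fix(B)\right|=2^{k+1}$, which is the desired power of two.

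I do not expect a genuine obstacle here: all the difficulty has already been absorbed into Lemma~\ref{1vertex:A/ker=Fix:lem}, and the only thing to notice is that abelianness forces the $-1$-action on $\Fix(B)$ to be trivial, killing its odd part. For completeness, a more structural alternative would split on whether $\ker\lambda\cap\Fix(B)=\{0\}$ or $\ker\lambda\subseteq\Fix(B)$, using Corollary~\ref{1vertex:kercentral:cor} that $\left|\ker\lambda\right|=2$. The first case is vacuous under abelian type, since Theorem~\ref{1vertex:2odd:thm} would present $(B,+)$ as a semidirect product of a non-trivial odd-order group with $\Z/2\Z$ on which the non-trivial element acts by inversion, which is non-abelian; the second case admits an induction on $|B|$ via Theorem~\ref{1vertex:2even:thm}, passing to $B/\Soc(B)$, which is again of abelian type with a one-vertex $\lambda$-graph and which has order $|B|/2$, together with the fact that $4$ divides $|B|$. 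The direct argument above is shorter, so that is the route I would take.
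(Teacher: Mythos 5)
Your proof is correct, and it takes a genuinely different route from the paper's. The paper argues structurally: by Lemma~\ref{1vertex:leftnilp:lem} the skew brace $B$ is left nilpotent with $B^3=0$, so by the cited result of Cedó--Smoktunowicz--Vendramin it decomposes as a direct product of $p$-braces along the Sylow subgroups of $(B,+)$; the ad hoc indecomposability result Lemma~\ref{1vertex:nodecomp:lem} then forces a single factor, and $|B|=2\left|\Fix(B)\right|$ pins down $p=2$. You instead combine the identity $x+f-x=-f$ of Lemma~\ref{1vertex:A/ker=Fix:lem} with commutativity of $(B,+)$ to get $2f=0$ on $\Fix(B)$, so $\Fix(B)$ is an elementary abelian $2$-group and the index-two condition finishes the proof. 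Your argument is more elementary and self-contained: it avoids both the external structure theorem and Lemma~\ref{1vertex:nodecomp:lem}, and it proves something strictly stronger, namely that $(\Fix(B),+)$ has exponent dividing two --- a fact the paper only establishes afterwards, in Proposition~\ref{pro:1vertex quotient}, by exactly the computation you perform here. What the paper's route buys in exchange is the indecomposability statement of Lemma~\ref{1vertex:nodecomp:lem}, which has some independent interest, and an argument that localizes the brace at a single prime before ever invoking the index-two count; but for this proposition alone your path is shorter and would let the paper defer (or merge) the exponent-two observation rather than derive it twice. Your fallback sketch via the dichotomy $\ker\lambda\cap\Fix(B)=\{0\}$ versus $\ker\lambda\subseteq\Fix(B)$ is also sound --- the first case is indeed vacuous since the generalized dihedral addition of Theorem~\ref{1vertex:2odd:thm} on a non-trivial odd-order group is non-abelian, and the second supports induction through Theorem~\ref{1vertex:2even:thm} --- but, as you say, the direct argument is preferable.
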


\begin{proof}
By Lemma ~\ref{1vertex:leftnilp:lem}, $B$ is left nilpotent with $B^{3}=0$. By~\cite[Theorem 4.8, Corollary 4.3]{MR3957824}, 
$B$ is the direct product $B=B_1\times\cdots\times B_n$ of the $p$-skew braces corresponding to the Sylow subgroups of $(B,+)$. However, Lemma~\ref{1vertex:nodecomp:lem} dictates that such a decomposition can only have one non-zero factor. Hence, $B$ is a $p$-skew brace for some prime $p$. As, by Lemma~\ref{1vertex:leftnilp:lem}, $|B|=2\left|\Fix(B)\right|$, it follows that $B$ is a $2$-skew brace.
\end{proof}

\begin{pro}
\label{pro:1vertex quotient}
    Let $B$ be a finite skew brace of abelian type.
    If $\Lambda(B)$ has exactly one vertex, then $(B/\Soc(B),\circ)\cong(\Fix(B),+)$ is an elementary abelian 2-group.
    Moreover, if $|B|>4$, then $\Lambda(B/Soc(B))$ has exactly one vertex too.
\end{pro}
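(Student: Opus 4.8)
The plan is to reduce everything to the machinery already set up for one-vertex $\lambda$-graphs, and in particular to Theorem~\ref{1vertex:2even:thm}, whose hypothesis $\ker\lambda\subseteq\Fix(B)$ I must first verify in the abelian-type setting. First I would record that, since $(B,+)$ is abelian, $Z(B,+)=B$ and hence $\Soc(B)=\ker\lambda\cap Z(B,+)=\ker\lambda$; by Corollary~\ref{1vertex:kercentral:cor} this subgroup has order two and lies in $Z(B,\circ)$.

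The crucial first observation is that $(\Fix(B),+)$ is an elementary abelian $2$-group. Indeed, choose any $x\in B\setminus\Fix(B)$, which exists since $\Fix(B)$ has index two by Lemma~\ref{1vertex:leftnilp:lem}. For every $f\in\Fix(B)$, Lemma~\ref{1vertex:A/ker=Fix:lem} gives $x+f-x=-f$, while abelianness of $(B,+)$ gives $x+f-x=f$; comparing the two yields $2f=0$. As $(\Fix(B),+)$ is finite abelian of exponent dividing two, it is elementary abelian. Combined with the isomorphism $(B/\ker\lambda,\circ)\cong(\Fix(B),+)$ of Lemma~\ref{1vertex:A/ker=Fix:lem} and the identification $\Soc(B)=\ker\lambda$, this establishes the first claim that $(B/\Soc(B),\circ)\cong(\Fix(B),+)$ is an elementary abelian $2$-group.

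This same observation lets me pin down which alternative of Theorems~\ref{1vertex:2odd:thm} and~\ref{1vertex:2even:thm} occurs. Since $|\ker\lambda|=2$, either $\ker\lambda\cap\Fix(B)=\{0\}$ or $\ker\lambda\subseteq\Fix(B)$. In the former case Theorem~\ref{1vertex:2odd:thm} would force $\Fix(B)$ to have odd order; but $\Fix(B)$ is a nontrivial elementary abelian $2$-group, and in particular of even order. It is nontrivial because otherwise $|B|=2$ and $B=\Triv(\Z/2\Z)$, which has no vertex, contradicting the hypothesis. Hence the odd case is impossible and $\ker\lambda\subseteq\Fix(B)$.

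Finally, the hypotheses of Theorem~\ref{1vertex:2even:thm} are now met, so either $|B|=4$ or $B/\Soc(B)$ is a skew brace with abelian multiplicative group whose $\lambda$-graph has exactly one vertex. Since we assume $|B|>4$, the second alternative holds, which is precisely the moreover part. I do not expect a genuine obstacle: all the heavy lifting is carried by the earlier lemmas, and the only point requiring care is the parity argument ruling out the case $\ker\lambda\cap\Fix(B)=\{0\}$, which the elementary-abelian computation settles cleanly.
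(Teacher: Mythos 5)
Your proof is correct, and its overall skeleton matches the paper's: elementary abelianness of $\Fix(B)$ from the relation $x+f-x=-f$ of Lemma~\ref{1vertex:A/ker=Fix:lem} together with commutativity of $(B,+)$, the identification $\Soc(B)=\ker\lambda$ in abelian type, and finally Theorem~\ref{1vertex:2even:thm} for the moreover part. The one step where you genuinely diverge is the verification of the hypothesis $\ker\lambda\subseteq\Fix(B)$. The paper gets this by a direct two-line computation: for $a\in\ker\lambda\subseteq Z(B,\circ)$ (Corollary~\ref{1vertex:kercentral:cor}) and any $x\in B$ one has $a+x=a\circ x=x\circ a=x+\lambda_x(a)$, and commutativity of $+$ then forces $\lambda_x(a)=a$. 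You instead observe that, since $\left|\ker\lambda\right|=2$, either $\ker\lambda\subseteq\Fix(B)$ or $\ker\lambda\cap\Fix(B)=\{0\}$, and you exclude the latter by parity: Theorem~\ref{1vertex:2odd:thm} would make $\Fix(B)$ of odd order, contradicting its being a nontrivial elementary abelian $2$-group (with the degenerate case $\left|\Fix(B)\right|=1$, hence $|B|=2$, ruled out via Lemma~\ref{1vertex:leftnilp:lem}). Your exclusion argument is valid and has the side benefit of showing that the two cases of Theorems~\ref{1vertex:2odd:thm} and~\ref{1vertex:2even:thm} are mutually exclusive in abelian type, but it invokes the full structural Theorem~\ref{1vertex:2odd:thm} where the paper's elementary centrality computation suffices.
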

\begin{proof}
    Let $\Lambda(x)$ be the only non-trivial $\lambda$-orbit of $B$.
    By Lemma~\ref{1vertex:A/ker=Fix:lem}, one obtains $x+f-x=-f$ for all $f\in\Fix(B)$.
    Since $(B,+)$ is abelian, $f+f=0$ for all $f\in\Fix(B)$.
    Thus $\Fix(B)$ is an elementary abelian 2-group.
    
    Since $B$ is of abelian type and by Corollary~\ref{1vertex:kercentral:cor},
    $\ker\lambda\subseteq Z(B,\circ)$,
  it follows that $\ker\lambda\subseteq \Fix(B)$. 
  By Theorem~\ref{1vertex:2even:thm} and using that $|B|>4$,
  the graph $\Lambda(B/\Soc(B))$ has exactly one vertex.    
\end{proof}

\begin{lem}
\label{1vertexabelian:oreder4:lem}
Let $B$ be a finite skew brace of abelian type with abelian multiplicative group.
If $\Lambda(B)$ has exactly one vertex, then $|B|=4$.
\end{lem}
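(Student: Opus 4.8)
The plan is to work directly with the $*$-operation and the commutativity of $(B,\circ)$, avoiding any case analysis on the additive group. First I would collect what the earlier results already supply: by Lemma~\ref{1vertex:leftnilp:lem} the subgroup $\Fix(B)$ has index two, so $|B|=2\left|\Fix(B)\right|$ and $B\setminus\Fix(B)$ is a single coset of $\Fix(B)$; by Corollary~\ref{1vertex:kercentral:cor}, $\left|\ker\lambda\right|=2$ and $\ker\lambda\subseteq Z(B,\circ)$. A short computation then shows $\ker\lambda\subseteq\Fix(B)$: for $k\in\ker\lambda$ and any $a$, using that $\lambda_k=\id$, that $k$ is $\circ$-central, and that $(B,+)$ is abelian, one gets $\lambda_a(k)=-a+k\circ a=-a+k+\lambda_k(a)=-a+k+a=k$, so $k\in\Fix(B)$.

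The key observation I would isolate is that, for fixed $a$, the value $a*b=\lambda_a(b)-b$ depends on $b$ only through its class in $B/\Fix(B)\cong\Z/2\Z$. Concretely, $\lambda_a$ fixes $\Fix(B)$ pointwise by the very definition of $\Fix(B)$, so fixing one element $x\notin\Fix(B)$ and writing any $b\notin\Fix(B)$ as $b=x+f$ with $f\in\Fix(B)$ gives $\lambda_a(b)-b=\lambda_a(x)-x=:g_a\in\Fix(B)$, while $a*b=0$ for every $b\in\Fix(B)$. In particular $a*b=0$ for all $b$ holds exactly when $a\in\ker\lambda$.

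Next I would feed in the hypothesis that $(B,\circ)$ is abelian. Since $(B,+)$ is abelian and $a\circ b=a+\lambda_a(b)$, the identity $a\circ b=b\circ a$ rearranges to $a*b=b*a$ for all $a,b\in B$. Evaluating this symmetry on a pair with $a\in\Fix(B)$ and $b\notin\Fix(B)$ yields $g_a=a*b=b*a=0$, because $\lambda_b$ fixes the element $a\in\Fix(B)$. Hence every $a\in\Fix(B)$ lies in $\ker\lambda$, that is, $\Fix(B)\subseteq\ker\lambda$. Combined with $\ker\lambda\subseteq\Fix(B)$ and $\left|\ker\lambda\right|=2$, this forces $\left|\Fix(B)\right|=2$, and therefore $|B|=2\left|\Fix(B)\right|=4$.

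I expect the main conceptual step to be the coset-independence of $a*b$ together with the translation of ``$(B,\circ)$ abelian'' into the symmetry $a*b=b*a$; once these two facts are in place, the contradiction with $\left|\ker\lambda\right|=2$ is immediate and handles every order at once. It is worth noting that a more pedestrian route is also available --- induct on $|B|$, using Theorem~\ref{1vertex:2even:thm} and Proposition~\ref{pro:1vertex quotient} to pass to $B/\Soc(B)$ and thereby reduce to ruling out $|B|=8$ --- but the direct argument above dispatches all orders uniformly and is the one I would write.
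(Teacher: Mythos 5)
Your proof is correct and takes essentially the same route as the paper: both arguments hinge on establishing $\Fix(B)=\ker\lambda$ (the paper phrases this as $\Soc(B)=\Fix(B)$, which is the same thing for a skew brace of abelian type), and then combine $\left|\ker\lambda\right|=2$ with $[B:\Fix(B)]=2$ from the earlier lemmas. Your explicit verification of the two inclusions via the symmetry $a*b=b*a$ simply spells out what the paper compresses into a single line.
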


\begin{proof}
Since $(B,\circ)$ is abelian, $\Soc(B)=\Fix(B)$.
By Lemma~\ref{1vertex:leftnilp:lem}, it follows that $2\left|\Fix(B)\right|=|B|$ and $\left|\Soc(B)\right|=\left|\ker \lambda\right|=2$. Thus $|B|=4$. 
\end{proof}

\begin{pro}
\label{1vertex:ordermax8:pro}
Let $B$ be a finite skew brace of abelian type. If $\Lambda(B)$ has exactly one vertex, then $B$ is of size at most $8$.
\end{pro}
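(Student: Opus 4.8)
The plan is to reduce to the already-settled case of an abelian multiplicative group (Lemma~\ref{1vertexabelian:oreder4:lem}) by passing to the quotient $B/\Soc(B)$. First I would invoke Proposition~\ref{1vertex:2brace:pro} to record that $|B|=2^m$ for some $m$. If $|B|\leq 4$ there is nothing to prove, so assume from now on that $|B|>4$.

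The crux is that $B/\Soc(B)$ satisfies every hypothesis of Lemma~\ref{1vertexabelian:oreder4:lem}. Indeed, Proposition~\ref{pro:1vertex quotient} shows that $\Lambda(B/\Soc(B))$ again has exactly one vertex and that $(B/\Soc(B),\circ)\cong(\Fix(B),+)$ is an elementary abelian $2$-group; in particular the multiplicative group of $B/\Soc(B)$ is abelian. Moreover $(B/\Soc(B),+)$ is a quotient of the abelian group $(B,+)$, hence abelian, so $B/\Soc(B)$ is again of abelian type.

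Applying Lemma~\ref{1vertexabelian:oreder4:lem} to $B/\Soc(B)$ then forces $|B/\Soc(B)|=4$. Since $B$ is of abelian type, $\Soc(B)=\ker\lambda\cap Z(B,+)=\ker\lambda$, which has order $2$ by Corollary~\ref{1vertex:kercentral:cor}; therefore $|B|=2\,|B/\Soc(B)|=8$. Combined with the case $|B|\leq 4$, this yields $|B|\leq 8$ (indeed $|B|\in\{4,8\}$).

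I do not expect a genuine obstacle, since every step is a direct appeal to an earlier result; the one point that needs care is verifying that the quotient $B/\Soc(B)$ inherits both abelian type and an abelian multiplicative group so that the order-$4$ lemma applies, which is precisely the content supplied by Proposition~\ref{pro:1vertex quotient} together with the fact that quotients of abelian groups are abelian.
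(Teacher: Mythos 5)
Your proof is correct and takes essentially the same route as the paper: both reduce the problem via Proposition~\ref{1vertex:2brace:pro} and Proposition~\ref{pro:1vertex quotient} to an application of Lemma~\ref{1vertexabelian:oreder4:lem} to the quotient $B/\Soc(B)$. The only cosmetic difference is that you argue directly that $|B/\Soc(B)|=4$ forces $|B|=8$ (using $|\Soc(B)|=|\ker\lambda|=2$ from Corollary~\ref{1vertex:kercentral:cor}), whereas the paper assumes $|B|\geq 16$ and derives the contradiction that $(B/\Soc(B),\circ)$ would be simultaneously non-abelian and elementary abelian.
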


\begin{proof}
By Proposition~\ref{1vertex:2brace:pro}, $|B|=2^m$ for some $m$. If $m\geq 4$, then 
Lemma~\ref{1vertexabelian:oreder4:lem} implies that 
$(B,\circ)$ is non-abelian. Moreover, $\overline{B}=B/\Soc(B)$ has at least eight elements 
and $\Lambda(\overline{B})$ has only one vertex (Proposition~\ref{pro:1vertex quotient}).
Thus we obtain a contradiction by applying Lemma~\ref{1vertexabelian:oreder4:lem} and
Proposition~\ref{pro:1vertex quotient}
to $\overline{B}$, since $(\overline{B},\circ)$ would   
be at the same time non-abelian and elementary abelian.
\end{proof}

\begin{thm}
\label{thm: one vertex lambda of abelian type}
    Let $B$ be a finite skew brace of abelian type such that $\Lambda(B)$ has only one vertex. Then $B$ is isomorphic to one of the following skew braces.
    \begin{itemize}
        \item The skew brace on $\Z/4\Z$, with multiplication given by $x\circ y=x+y+2xy$.
        \item The skew brace on $\Z/2\Z\times\Z/2\Z$, with multiplication 
        \[
        (x_1,y_1)\circ(x_2,y_2)=(x_1+x_2+y_1y_2, y_1+y_2).
        \]
        \item The skew brace on $\Z/2\Z\times\Z/4\Z$, with multiplication 
        \[
        (x_1,y_1)\circ (x_2,y_2)=\left(x_1+x_2+y_2\sum\limits_{i=1}^{y_1-1}i,y_1+y_2+2y_1y_2
        \right).
        \]
    \end{itemize}
\end{thm}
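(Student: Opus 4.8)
The plan is to pin down the admissible cardinalities of $B$ using the results already obtained, and then to classify the skew braces of each order. By Proposition~\ref{1vertex:2brace:pro} we have $|B|=2^m$, and Proposition~\ref{1vertex:ordermax8:pro} gives $|B|\leq 8$. Since $\Lambda(B)$ has a vertex, the unique non-trivial orbit is non-empty, so by Lemma~\ref{1vertex:leftnilp:lem} we get $|B|=2|\Fix(B)|$ with $|\Fix(B)|>1$, whence $|B|\geq 4$. Therefore $|B|\in\{4,8\}$, and the argument splits into these two cases.

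For $|B|=4$ the group $(B,\circ)$ is automatically abelian, so $B$ is a skew brace of order $p^2$ with $p=2$. By Bachiller's classification (Theorem~\ref{thm:p2}) the non-trivial such skew braces are exactly the two displayed in the statement, and by Proposition~\ref{p2graph:pro} each has $\Lambda(B)$ equal to the complete graph on $p-1=1$ vertices, i.e. a single vertex, whereas the two trivial ones have empty graph. This yields the first two skew braces of the list.

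The substantial case is $|B|=8$. Here Lemma~\ref{1vertexabelian:oreder4:lem} forces $(B,\circ)$ to be non-abelian, so $(B,\circ)$ is either the dihedral group $D_4$ or the quaternion group $Q_8$, while Proposition~\ref{pro:1vertex quotient} gives $|\Soc(B)|=|\ker\lambda|=2$ and $\Fix(B)\cong(B/\Soc(B),\circ)\cong(\Z/2\Z)^2$. I would next determine $(B,+)$. As $(\Z/2\Z)^2\cong\Fix(B)$ embeds in $(B,+)$, the cyclic group $\Z/8\Z$ is excluded. To rule out $(\Z/2\Z)^3$, I would use that $\lambda_a$ fixes $\Fix(B)=A^2$ pointwise (since $A^3=0$ by Lemma~\ref{1vertex:leftnilp:lem}), so that $\ell_a:=\lambda_a-\id$ is additive, kills $\Fix(B)$, and hence factors through $B/\Fix(B)\cong\Z/2\Z$, taking a constant value on the non-trivial coset. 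From $a\circ a=2a+\ell_a(a)$ one then reads off, in the case $2a=0$, that $a\circ a=0$ for $a\in\Fix(B)$ while $a\circ a=\ell_a(a)\neq 0$ for $a\notin\Fix(B)$. Thus $\{a\in B : a\circ a=0\}$ would equal $\Fix(B)$ and have four elements, which is impossible since in $D_4$ and $Q_8$ this set has six and two elements respectively. Hence $(B,+)\cong\Z/2\Z\times\Z/4\Z$, and $\Fix(B)$ is its unique subgroup isomorphic to $(\Z/2\Z)^2$.

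Finally I would pin down the multiplication. Writing $a\circ b=a+\lambda_a(b)=a+b+\delta(b)\phi(a)$, where $\delta(b)=1$ exactly when $b\notin\Fix(B)$ and $\phi(a)=\lambda_a(x)-x\in\Fix(B)$ for a fixed $x\notin\Fix(B)$, the requirement that $\lambda\colon(B,\circ)\to\Aut(B,+)$ be a homomorphism reduces to $\phi(a\circ b)=\phi(a)+\phi(b)$, so $\phi$ is a surjection onto $\Fix(B)\cong(\Z/2\Z)^2$ with kernel $\Soc(B)$. I would then check that, up to an automorphism of $(B,+)$ preserving $\Fix(B)$, such a $\phi$ is unique, and that it produces precisely the skew brace on $\Z/2\Z\times\Z/4\Z$ in the statement; a direct verification (using Lemma~\ref{1vertex:A/ker=Fix:lem} and Corollary~\ref{1vertex:kercentral:cor}) then confirms this brace is of abelian type with $\Fix(B)$ the order-four elementary abelian subgroup and a single non-trivial $\lambda$-orbit $x+\Fix(B)$, establishing existence. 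I expect the order-eight analysis to be the main obstacle: the coprimality machinery of Section~\ref{comdivgraph:sec} is useless here, and everything rests on the rigid interaction between $\Fix(B)$, $\ker\lambda$ and the non-abelian multiplicative group, first to exclude the elementary abelian additive group and then to reduce the remaining freedom in $\phi$ to a single isomorphism class.
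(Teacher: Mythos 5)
Your proposal is correct and, through the reduction to $|B|\in\{4,8\}$ and the order-$4$ case, coincides with the paper's proof; the genuine divergence is at $|B|=8$. There the paper determines the additive group by showing that $\overline{B}=B/\Soc(B)$ is an order-$4$ one-vertex brace with $(\overline{B},\circ)\cong\Z/2\Z\times\Z/2\Z$, hence $(\overline{B},+)\cong\Z/4\Z$ by Theorem~\ref{thm:p2}, and it then finishes by citing the classification of order-$8$ braces of abelian type and computing $\Fix$ for the only two candidates on $\Z/2\Z\times\Z/4\Z$ with socle of order two and non-abelian multiplicative group. You instead exclude $(\Z/2\Z)^3$ by counting solutions of $a\circ a=0$: your argument shows this set would be exactly $\Fix(B)$, of size four, whereas it has six elements in the dihedral group of order eight and two in the quaternion group; this is a clean, more elementary alternative to the paper's quotient argument. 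You then bypass the order-$8$ classification entirely via the parametrization $a\circ b=a+b+\delta(b)\phi(a)$ and a conjugacy analysis of $\phi$ under $\Aut(B,+)$. Both of your deferred claims are in fact true, so the plan goes through, but two points deserve attention. First, the inequality $\ell_a(a)\neq 0$ for $a\notin\Fix(B)$ is not automatic: it needs $\Stab(x)=\ker\lambda\subseteq\Fix(B)$ for a fixed $x\notin\Fix(B)$, which follows from Corollary~\ref{1vertex:kercentral:cor} together with abelian type, exactly as in the proof of Proposition~\ref{pro:1vertex quotient}; this should be made explicit, since it is the one hidden step in your involution count. Second, the uniqueness of $\phi$ up to $\Aut(B,+)$ is the real workload of your last step: writing it out, the condition $\phi(a\circ b)=\phi(a)+\phi(b)$ (which here is equivalent to associativity of $\circ$) forces $\phi|_{\Fix(B)}$ to have kernel equal to image, of order two and not containing $2x$, with $\phi(x)\in 2x+\ker(\phi|_{\Fix(B)})$; this yields four admissible maps forming a single $\Aut(B,+)$-orbit, whose brace is precisely the third one in the statement. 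In summary, the paper's route is shorter because it leans on an external classification, while yours is self-contained at the cost of this hands-on analysis, which must be carried out in full for the proof to be complete.
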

\begin{proof}
 By Proposition~\ref{1vertex:2brace:pro} and Corollary~\ref{1vertex:ordermax8:pro}, $|B|\in\{4,8\}$. 
 Using Proposition~\ref{p2graph:pro}, we can say immediately that both non-trivial skew braces of order 4 have a one-vertex graph. 
 Thus, using Theorem~\ref{thm:p2} 
 we get the first two cases.
 
Assume now $|B|=8$. Let $\overline{B}=B/\Soc(B)$.
 Proposition~\ref{pro:1vertex quotient} implies that $(\Fix(B),+)\cong(\overline{B},\circ)\cong \Z/2\Z\times\Z/2\Z$.
 By Lemma~\ref{1vertexabelian:oreder4:lem}, $(B,\circ)$ has to be non-abelian, so it is isomorphic either to $D_8$ or $Q_8$.
 Moreover, $\Soc(B)$ is a central subgroup of $(B,\circ)$, and in both of the previous cases, $\left|Z(B,\circ)\right|=2$ and $(B,\circ)/Z(B,\circ)\cong\Z/2\Z\times \Z/2\Z$.
 Thus $(\overline{B},\circ)$ is isomorphic to  $\Z/2\Z\times \Z/2\Z$ and, by Theorem~\ref{thm:p2}, $(\overline{B},+)$ is isomorphic to $\Z/4\Z$.
Since $(B,+)$ has to have a subgroup isomorphic to $\Z/2\Z\times \Z/2\Z$ and a quotient isomorphic to $\Z/4\Z$, the only possibility is that $(B,+)\cong \Z/2\Z\times\Z/4\Z$.
Finally, looking at the classification of skew braces of abelian type of order 8 in~\cite{MR3320237},
there are only two possible skew brace structures on $\Z/2\Z\times\Z/4\Z$ with socle of order two and non-abelian multiplicative group:
the skew brace $B_1$ on $\Z/2\Z\times\Z/4\Z$ with
 \[
(x_1,y_1)\circ(x_2,y_2)=(x_1+x_2,y_1+y_2+2(x_1+y_1)x_2+2y_1y_2)
 \]
 and the skew brace $B_2$ on $\Z/2\Z\times\Z/4\Z$ with
\[
(x_1,y_1)\circ(x_2,y_2)=\left(x_1+x_2+y_2\sum\limits_{i=1}^{y_1-1}i,y_1+y_2+2y_1y_2\right).
\]
Note that 
\[
\sum\limits_{i=1}^{y_1-1}i=\begin{cases}0&\hbox{ if }y_1\in\{0,1\},\\1&\hbox{ if }y_1\in\{2,3\}.\end{cases}
\]
In the first case, 
\[
(x_1,y_1)*(x_2,y_2)=(0,2(x_1+y_1)x_2+2y_1y_2).
 \]
Thus $(x_2,y_2)\in \Fix(B_1)$ if and only if $2(x_1+y_1)x_2+2y_1y_2=0$ in $\Z/4\Z$ for all $x_1\in \Z/2\Z$ and $y_1\in\Z/4\Z$.
 This means that $(x_2,y_2)\in \Fix(B_1)$ if and only if $2y_2=0$ in $\Z/4\Z$ and $2x_2=0$ in $\Z/4\Z$.
 Hence $\Fix(B_1)=\{0\}\times 2\Z/4\Z$ has order two and $\Lambda(B_1)$ cannot have only one vertex.
In the second case, 
\[(x_1,y_1)* (x_2,y_2)=\left(y_2\sum\limits_{i=1}^{y_1-1}i,2y_1y_2\right).
\]
Thus $(x_2,y_2)\in \Fix(B_2)$ if and only if $y_2\sum\limits_{i=1}^{y_1-1}i=0$ in $\Z/2\Z$ and $2y_1y_2=0$ in $\Z/4\Z$ for all $y_1\in\Z/4\Z$.
 This means that $(x_2,y_2)\in \Fix(B_2)$ if and only if $y_2=0$ in $\Z/2\Z$ and $2y_2=0$ in $\Z/4\Z$.
 Hence $\Fix(B_2)=\Z/2\Z\times 2\Z/4\Z$.
 Moreover, 
 \[
 \lambda_{(x_1,y_1)}(0,1)=(0,3y_2),
 \]
 thus $\Lambda(0,1)=\{0\}\times \Z/4\Z=B_2\setminus\Fix(B_2)$ and $\Lambda(B_2)$ has only one vertex.
\end{proof}

\section{Skew braces with one-vertex \texorpdfstring{$\lambda$}{lambda}-graph: Finale}
In this section, we obtain a full classification of skew braces with a $\lambda$-graph with one vertex,
showing that they consist of four families of skew braces.

\begin{thm}
\label{thm: onevertex:general:structure}
Let $A$ be a finite skew brace.
$\Lambda(A)$ has exactly one vertex if and only if 
$A$ is isomorphic to a skew brace on $F\times \Z/2\Z$ with operations
\begin{equation}
    \label{eq: onevertex operations}
\begin{aligned}
    (f_1,k_1)+(f_2,k_2)&=\big(f_1+(-1)^{k_1}f_2+k_1k_2y,k_1+k_2\big)\\
    (f_1,k_1)\circ(f_2,k_2)&=\big(f_1+(-1)^{k_1}f_2+\psi(f_1,k_1,k_2)+k_1k_2y,k_1+k_2\big)
\end{aligned}
\end{equation}
where $F\neq\{0\}$ is an abelian group,
$y\in F$ is an element such that $2y=0$, and
$\psi\colon F\times\Z/2\Z\times\Z/2\Z\to F$ is a surjective map such that
\[
\psi(f_1,k_1,k_2)=\frac{1-(-1)^{k_2}}{2}\left(\phi(f_1)-\frac{1-(-1)^{k_1}}{2}z\right),
\]
where $\phi\in\End(F)$, $z\in F$,
$\phi(z)=\phi(y)-2z$,
and $\phi(\phi(f))=-2\phi(f)$ for all $f\in F$.
\end{thm}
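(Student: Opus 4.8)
The plan is to prove both directions in explicit coordinates adapted to the index-two ideal $F:=\Fix(A)$, treating the two cases of the previous section (namely $\ker\lambda\cap F=\{0\}$ and $\ker\lambda\subseteq F$) uniformly. For the forward direction, assume $\Lambda(A)$ has a single vertex. By Lemma~\ref{1vertex:leftnilp:lem}, $F$ is an ideal of index two; by Lemma~\ref{1vertex:A/ker=Fix:lem} and Corollary~\ref{1vertex:kercentral:cor}, the group $(F,+)$ is abelian, the map $\mu\colon(A,\circ)\to(F,+)$, $\mu(a)=-x+\lambda_a(x)$, is a surjective homomorphism with kernel $\ker\lambda$, and $x+f-x=-f$ for all $f\in F$ and all $x\in A\setminus F$. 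I would fix such an $x$, set $y:=x+x$, and identify $A$ with $F\times\Z/2\Z$ through $(f,0)\mapsto f$, $(f,1)\mapsto f+x$. Applying the relation $x+f-x=-f$ to $f=x+x\in F$ gives $2y=0$, and expanding sums in the four parity cases reproduces the additive law of~\eqref{eq: onevertex operations}.

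Next I would compute $a\circ b=a+\lambda_a(b)$. As $\lambda_a$ fixes $F$ pointwise and $\lambda_a(x)=x+\mu(a)$, one finds $\lambda_a(f_2+x)=(f_2-\mu(a))+x$; substituting into the additive law yields the multiplicative law with correction term $-(-1)^{k_1}k_2\mu((f_1,k_1))$. The crux is to express $\mu$ in these coordinates: on $F$ it is an endomorphism, so I set $\phi:=-\mu|_F\in\End(F)$, and from $x\circ f=x+f=-f+x=(-f,1)$ together with the homomorphism property I get $\mu((g,1))=\phi(g)+z_0$, where $z_0:=\mu(x)$. Putting $z:=-z_0$ then turns the correction term into exactly $\psi(f_1,k_1,k_2)$. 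The two nonlinear relations now fall out of consistency: comparing the two expressions for $\mu\big((f,0)\circ(0,1)\big)$, using $(f,0)\circ(0,1)=(f+\phi(f),1)$, forces $\phi(\phi(f))=-2\phi(f)$; and evaluating $\mu$ on $x\circ x=(y+z_0,0)$ in two ways forces $\phi(z)=\phi(y)-2z$. Finally, $\operatorname{im}\psi=\operatorname{im}\mu=F$ because $\mu$ is surjective, so $\psi$ is surjective, and $F\neq\{0\}$ since $A$ is non-trivial.

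For the converse, I would begin from data $(F,y,\phi,z)$ satisfying the stated relations and verify the skew brace axioms directly. Here $2y=0$ is exactly the cocycle condition making $+$ associative, while $\phi(\phi(f))=-2\phi(f)$ and $\phi(z)=\phi(y)-2z$ are what is needed for associativity of $\circ$ and for the skew left distributivity; the identity $(1+\phi)^2=\id$, immediate from $\phi^2=-2\phi$, is convenient for inverses. A short computation of the $\lambda$-action then gives $\Fix(A)=F\times\{0\}$ and shows that the orbit of $(0,1)$ is $\big(\operatorname{im}\phi\cup(z+\operatorname{im}\phi)\big)\times\{1\}$; since $\operatorname{im}\phi$ is a subgroup and $\psi$ (equivalently $\mu$) is surjective, this orbit fills the whole coset $F\times\{1\}$, so $\Lambda(A)$ has exactly one vertex. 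I expect the main obstacle to be precisely this converse bookkeeping—verifying associativity of $\circ$ and the compatibility identity across the four parity cases—rather than anything conceptual, since the two nonlinear constraints on $\phi$ are exactly the obstructions those verifications produce.
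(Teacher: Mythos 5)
Your proposal is correct and takes essentially the same approach as the paper: the same coordinates on $F\times\Z/2\Z$ obtained from Lemma~\ref{1vertex:A/ker=Fix:lem}, the same objects $\phi$, $z$, $\psi$ (your $\mu$ is, up to sign, the paper's map $h$ defined by $\lambda_{(f,k)}(0,1)=(h(f,k),1)$), and the same converse by direct verification of the axioms plus computation of the $\lambda$-action and the surjectivity argument. The only cosmetic difference is that you extract the constraints $\phi(\phi(f))=-2\phi(f)$ and $\phi(z)=\phi(y)-2z$ from the homomorphism property of $\mu$ furnished by that lemma, whereas the paper re-derives them by composing $\lambda$-maps applied to $(0,1)$ — the same computation in different packaging.
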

\begin{proof}
    Let $A$ be a skew brace with $\Lambda(A)$ with exactly one vertex
    and denote $F=\Fix(A)$.
    By Lemma ~\ref{1vertex:A/ker=Fix:lem} we get directly that 
    $(F,+)$ is abelian and
    $(A,+)$ is isomorphic to $(F\times \Z/2\Z,+)$ with 
    \[
    (f_1,k_1)+(f_2,k_2)=(f_1+(-1)^{k_1}f_2+c(k_1,k_2),k_1+k_2),
    \]
    where $c\colon\Z/2\Z\times\Z/2\Z\to F$ is a normalized 2-cocycle, i.e. $y=c(1,1)\in F$ is such that $2y=0$ and 
    $c(i,j)=0$ for every $(i,j)\neq (1,1)$.
    Moreover in this isomorphism $\Fix(A)$ corresponds to $F\times \{0\}$ 
    and the only non-trivial $\lambda$-orbit corresponds to $F\times\{1\}$.
    Therefore, translating the $\lambda$-action using this isomorphism,
    $\lambda_{(f_1,k_1)}$ is the identity on $F\times\{0\}$ and $\lambda_{(f_1,k_1)}(F\times\{1\})=F\times\{1\}$ for every $(f_1,k_1)\in F\times\Z/2\Z$.
    Thus there exists a map $h:F\times\Z/2\Z\to F$ such that $\lambda_{(f_1,k_1)}(0,1)=(h(f_1,k_1),1)$ for every $(f_1,k_1)\in F\times \Z/2\Z$. Hence 
    \[
    \lambda_{(f_1,k_1)}(f_2,k_2)=\begin{cases}
        (f_2,0)&\text{ if } k_2=0\\
        (f_2,0)+(h(f_1,k_1),1)=(f_2+h(f_1,k_1),1)&\text{ if } k_2=1
    \end{cases}
    \]
    for every $(f_1,k_1), (f_2,k_2)\in F\times \Z/2\Z$.
    Define $\phi:F\to F$ as $\phi(f)=h(f,0)$ and $z=h(0,1)$.
    Since for every $f_1,f_2\in F$
    \begin{align*}
        (\phi(f_1)+\phi(f_2),1)&=
    \lambda_{(f_1,0)}(\lambda_{(f_2,0)}(0,1))\\
    &=
    \lambda_{(f_1,0)+\lambda_{(f_1,0)}(f_2,0)}(0,1)\\
    &=(\phi(f_1+f_2),1),
    \end{align*}
    $\phi$ is a group homomorphism.
    Moreover for every $f\in F$
    \[
    (\phi(f)+h(0,1),1)=
    \lambda_{(0,1)}(\lambda_{(f,0)}(0,1))=
    \lambda_{(0,1)+\lambda_{(0,1)}(f,0)}(0,1)=
    (h(-f,1),1),
    \]
    thus $h(f,1)=z-\phi(f)$ for all $f\in F$.
    We also have that for every $f,f_1\in F$
    \begin{align*}
    (h(f_1,1)+\phi(f),1)&=
    \lambda_{(f,0)}(\lambda_{(f_1,1)}(0,1))\\
    &=\lambda_{(f,0)+\lambda_{(f,0)}(f_1,1)}(0,1)\\ 
    &=(h(f+f_1+\phi(f),1),1),    
    \end{align*}
    hence 
    \[
    z-\phi(f_1)+\phi(f)=
    h(f_1,1)+\phi(f)=
    h(f+f_1+\phi(f),1)=
    z-\phi(f)-\phi(f_1)-\phi(\phi(f)).
    \]
    Therefore $\phi(\phi(f))=-2\phi(f)$ for all $f\in F$. Finally    
    \begin{align*}
        (2z,1)=\lambda_{(0,1)}(z,1)&=
    \lambda_{(0,1)}(\lambda_{(0,1)}(0,1))\\
    &=\lambda_{(0,1)+\lambda_{(0,1)}(0,1)}(0,1)\\
    &=(-\phi(z)+y,1)
    \end{align*}  
    hence $\phi(z)=\phi(y)-2z$.
    Thus we have the thesis defining $\psi(f,k,0)=0$ and $\psi(f,k,1)=(-1)^kh(f,k)$ for all $(f,k)\in F\times \Z/2\Z$.

    Conversely, consider on $A=F\times \Z/2\Z$ the operations defined as in the statement.
    Since $(k_1,k_2)\mapsto k_1k_2 y$ is a 2-cocycle 
    (for the action $k\cdot f=-f$ of $\Z/2\Z$ on $F$),
    we know that $(A,+)$ is a group.
    As for the $\circ$ operation,
    it is easy to see that $(0,0)$ is the neutral element.
    We claim that $(A,\circ)$ is a group, so we need to prove the associativity.
    For all $(f,k),(f_1,k_1),(f_2,k_2)\in A$, one can see that
\[
(f,k)\circ\big((f_1,k_1)\circ(f_2,k_2)\big)=\big((f,k)\circ(f_1,k_1)\big)\circ(f_2,k_2)
\] is equivalent to 
\begin{equation}
    \label{eq:associativity}
    \begin{aligned}
    &(-1)^k\psi(f_1,k_1,k_2)+\psi(f,k,k_1+k_2)\\
        &=\psi(f,k,k_1)+\psi 
        \big(f+(-1)^kf_1+\psi(f,k,k_1)+kk_1y,k+k_1,k_2\big).
    \end{aligned}
    \end{equation}
If $k_2=0$, then \eqref{eq:associativity} follows easily. If $k_2=1$ and $k_1=0$, then \eqref{eq:associativity} becomes
\[
(-1)^k\phi(f_1)+\psi(f,k,1)=\psi 
        (f+(-1)^kf_1,k,1)
\]
and this holds for all $f,f_1\in F$ and $k\in \Z/2\Z$
thanks to the definition of $\psi$ and to the fact the $\phi$ is a group homomorphism. Finally, if $k_2=k_1=1$, using that $\psi(a,h,1)+\psi(b,h+1,1)=\phi(a)+\phi(b)-z$ for all $a,b\in F$ and $h\in\Z/2\Z$, \eqref{eq:associativity} becomes
\[
(-1)^{k+1}z=2\phi(f)+\phi(\psi(f,k,1))+\phi(c(k,1))-z.
\]
Which is equivalent to the hypotheses $\phi(\phi(f))=-2\phi(f)$ and $\phi(z)=\phi(y)-2z$.

To prove that $(A,+,\circ)$ is a skew brace we have to show 
that
\[
(f,k)\circ\big((f_1,k_1)+(f_2,k_2)\big)=(f,k)\circ(f_1,k_1)-(f,k)+(f,k)\circ (f_2,k_2)
\]
for all $(f,k), (f_1,k_1), (f_2,k_2)\in A$.
This is equivalent to proving that
\begin{align*}
    f&+(-1)^k\big(f_1+(-1)^{k_1}f_2+k_1k_2y\big)+\psi(f,k,k_1+k_2)+k(k_1+k_2)y\\
    =&f+(-1)^kf_1+\psi(f,k,k_1)+kk_1y+(-1)^{k_1+1}f+(-1)^{k+k_1}k^2y+(k+k_1)ky\\
    &+(-1)^{k_1}\big(f+(-1)^kf_2+\psi(f,k,k_2)+kk_2y\big)+k_1(k+k_2)y,
\end{align*}
for all $f,f_1,f_2\in F$ and $k,k_1,k_2\in \Z/2\Z$,
which means
\[
\psi(f,k,k_1+k_2)=\psi(f,k,k_1)+(-1)^{k_1}\psi(f,k,k_2),
\]
that is a trivial equality.

Finally we need to compute $\Lambda(A)$
and so the $\lambda$-action of $A$.
Note that for all $(f_1,k_1), (f_2,k_2)\in A$,
\begin{align*}
\lambda&_{(f_1,k_1)}(f_2,k_2)\\
&=\big((-1)^{k_1+1}f_1+k_1y,k_1\big)+\big(f_1+(-1)^{k_1}f_2+\psi(f_1,k_1,k_2)+k_1k_2y,k_1+k_2\big)\\
&=\big(f_2+(-1)^{k_1}\psi(f_1,k_1,k_2),k_2\big).
\end{align*}
Therefore $F\times\{0\}\subseteq \Fix(A)$ and $(f_2,1)\in \Fix(A)$ if and only if $0=\psi(f_1,k_1,1)$ for all $(f_1,k_1)\in A$, a contradiction with the surjectivity of $\psi$. Hence $\Fix(A)=F\times\{0\}$. Moreover, again by surjectivity of $\psi$,
\[
\Lambda(0,1)=\left\{\big((-1)^{k_1}\psi(f_1,k_1,1),1\big)\colon (f_1,k_1)\in A\right\}=F\times\{1\}.
\]
Thus $\Lambda(A)$ has exactly one vertex.
\end{proof}

\begin{thm}
\label{thm: onevertex isomorphic condition}
    Let $F\ne\{0\}$ be a finite abelian group
    and let $A$ and $A_1$ be two skew braces on $F\times\Z/2\Z$
    defined as in Theorem ~\ref{thm: onevertex:general:structure} with $\psi, \phi, y, z$ used for the definition of $A$ and 
    $\psi_1, \phi_1, y_1, z_1$ used for the definition of $A_1$.
    Then $A$ and $A_1$ are isomorphic
    if and only if there exist $\sigma\in\Aut(F)$
    such that $\phi_1=\sigma\circ\phi\circ\sigma^{-1}$,
    $\sigma(y)=y_1$ and $z_1-\sigma(z)\in\phi_1(F)$.    
\end{thm}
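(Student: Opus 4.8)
\section*{Proof proposal}

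The plan is to prove both implications by reducing a skew brace isomorphism $\Phi\colon A\to A_1$ to its effect on $\Fix=F\times\{0\}$ and on the complementary coset $F\times\{1\}$, and conversely to rebuild $\Phi$ from the data $\sigma,w$. First I would record that any skew brace isomorphism intertwines the $\lambda$-actions, via $\lambda^{A_1}_{\Phi(a)}=\Phi\circ\lambda^{A}_{a}\circ\Phi^{-1}$, and hence maps $\Fix(A)$ bijectively onto $\Fix(A_1)$; by Theorem~\ref{thm: onevertex:general:structure} both of these equal $F\times\{0\}$. Since $\Phi$ is a bijection preserving $F\times\{0\}$, it also preserves the complement $F\times\{1\}$. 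On $F\times\{0\}$ the operations $+$ and $\circ$ agree and reproduce $(F,+)$, so $\Phi|_{F\times\{0\}}$ is a group automorphism: write $\Phi(f,0)=(\sigma(f),0)$ with $\sigma\in\Aut(F)$. Putting $\Phi(0,1)=(w,1)$ and using the additive identity $(f,1)=(f,0)+(0,1)$ gives $\Phi(f,1)=(\sigma(f)+w,1)$, so $\Phi(f,k)=(\sigma(f)+kw,k)$.

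Next I would extract the three conditions by testing the homomorphism property on the essential products. Additivity on $(f_1,1)+(f_2,1)=(f_1-f_2+y,0)$ yields $\sigma(y)=y_1$, and the remaining additive cases hold automatically once $\sigma$ is a homomorphism. For multiplicativity, testing $(f_1,0)\circ(f_2,1)$, where $\psi(f_1,0,1)=\phi(f_1)$, gives $\sigma\circ\phi=\phi_1\circ\sigma$, that is $\phi_1=\sigma\circ\phi\circ\sigma^{-1}$. Finally, testing $(f_1,1)\circ(f_2,1)$, where $\psi(f_1,1,1)=\phi(f_1)-z$ and $\psi_1(\sigma(f_1)+w,1,1)=\phi_1(\sigma(f_1)+w)-z_1$, and inserting the two relations already obtained together with $\phi_1(\sigma(f_1)+w)=\sigma\phi(f_1)+\phi_1(w)$, collapses to $z_1-\sigma(z)=\phi_1(w)$; in particular $z_1-\sigma(z)\in\phi_1(F)$. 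This proves the forward implication.

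For the converse, given $\sigma\in\Aut(F)$ with $\phi_1=\sigma\phi\sigma^{-1}$, $\sigma(y)=y_1$ and $z_1-\sigma(z)\in\phi_1(F)$, I would choose $w\in F$ with $\phi_1(w)=z_1-\sigma(z)$ and define $\Phi(f,k)=(\sigma(f)+kw,k)$. This is a bijection since $\sigma$ is and the term $kw$ merely translates the coset $F\times\{1\}$. One then checks that $\Phi$ preserves $+$ and $\circ$ by running the computations of the forward direction in reverse: the additive identities reduce to $\sigma$ being a homomorphism together with $\sigma(y)=y_1$, and the multiplicative identities reduce, case by case on $(k_1,k_2)$, to $\sigma\phi=\phi_1\sigma$ and $\phi_1(w)=z_1-\sigma(z)$, all of which hold by hypothesis.

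I expect the only delicate point to be the bookkeeping in the product $(f_1,1)\circ(f_2,1)$, where one must substitute $\Phi(f,1)=(\sigma(f)+w,1)$, expand $\phi_1(\sigma(f_1)+w)$ using linearity of $\phi_1$, and carefully cancel the $\phi$, $y$ and $z$ contributions using the defining relations of $\psi$. The shift $w$ enters linearly, exactly through $\phi_1(w)$, which is why the $z$-condition is a coset membership rather than an equality; all the other verifications are routine cocycle computations in $F\times\Z/2\Z$.
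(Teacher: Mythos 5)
Your proposal is correct and takes essentially the same route as the paper's proof: both directions hinge on showing that any isomorphism must have the normal form $(f,k)\mapsto(\sigma(f)+kw,k)$ with $\sigma\in\Aut(F)$ (using that it preserves $\Fix = F\times\{0\}$ and additivity on $(f,0)+(0,1)$), extracting the three conditions from the cases $k_2=1$, and then constructing the converse isomorphism by the very same formula after choosing $w$ with $\phi_1(w)=z_1-\sigma(z)$. The only cosmetic differences are that the paper derives the conditions from compatibility with the $\lambda$-actions rather than from the $\circ$-products directly (an equivalent computation), and it proves $\sigma\in\Aut(F)$ via a kernel argument where you use that the restriction to $\Fix$ is already a bijective group homomorphism; both are fine.
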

\begin{proof}
We first prove $\implies$. 
Suppose there is a map $\Theta:F\times\Z/2\Z\to F\times\Z/2\Z$ which is 
an isomorphism of skew braces from $A$ to $A_1$.
We will use the notation $\Theta(f,k)=(\alpha(f,k),\beta(f,k))$.
Since $\Theta(F\times\{0\})=\Theta(\Fix(A))=\Fix(B)=F\times \{0\}$,
then $\beta(f,0)=0$ for all $f\in F$.
Moreover, since $(f,0)+(0,1)=(f,1)$, we have that  for all $f\in F$
\begin{align*}
    \big(\alpha&(f,1),\beta(f,1)\big)=
    (\alpha(f,0),\beta(f,0))+_1(\alpha(0,1),\beta(0,1))\\
    &=(\alpha(f,0),0)+(\alpha(0,1),\beta(0,1))=\big(\alpha(f,0)+\alpha(0,1),\beta(0,1)\big),
\end{align*}
i.e. $\beta(f,1)=\beta(0,1)$ for all $f\in F$.
By surjectivity of $\Theta$ and the fact that $\beta(f,0)=0$, we get that $\beta(f,1)=\beta(0,1)=1$ and so $\beta(f,k)=k$ for all $f\in F$.
Moreover, since $\Theta$ is a group homomorphism for the additive groups,
$\alpha(y,0)=y_1$ and
\[
\alpha(f_1+f_2,0)=\alpha(f_1,0)+\alpha(f_2,0),\quad
\alpha(f_1-f_2,1)=\alpha(f_1,1)-\alpha(f_2,0)
\]
 for all $f_1,f_2\in F$.
 Hence, defining $\sigma:F\to F$ as the map $f\mapsto\alpha(f,0)$
and $s=\alpha(0,1)$,
we have that $\sigma\in\End(F)$,
$\sigma(y)=y_1$,
and $\alpha(f,1)=\sigma(f)+s$ for all $f\in F$.
Moreover, for all $(f,k)\in F\times\Z/2\Z$,
\[
\alpha(f,k)=\sigma(f)+\frac{1-(-1)^k}{2}s.
\]
Thus $(0,0)=\Theta(f,k)=\left(\sigma(f)+\frac{1-(-1)^k}{2}s,k\right)$ if and only if 
$k=0$ and $\sigma(f)=0$, i.e. $\{(0,0)\}=\ker\Theta=\ker\sigma\times\{0\}$.
Hence $\sigma\in \Aut(F)$ and $\sigma(y)=y_1$.
Furthermore, since $\Theta$ is a skew brace isomorphism, it is compatible with the $\lambda$-actions. Hence, recalling that
$\lambda_{(f_1,k_1)}(f_2,k_2)=(f_2+(-1)^{k_1}\psi(f_1,k_1,k_2),k_2)$ and analogously for 
the $\lambda$-action $\lambda^1$ of $A_1$,
\[
\alpha(f_2,k_2)+(-1)^{k_1}\psi_1(\alpha(f_1,k_1),k_1,k_2)=\alpha(f_2+(-1)^{k_1}\psi(f_1,k_1,k_2),k_2)
\]
for all $(f_1,k_1),(f_2,k_2)\in F\times\Z/2\Z$.
In particular,
\[
s+\phi_1(\sigma(f))=\alpha(0,1)+\psi_1(\alpha(f,0),0,1)=\alpha(\psi(f,0,1),1)=\sigma(\phi(f))+s,
\]
 for all $f\in F$, and
\[
s-\phi_1(s)+z_1=\alpha(0,1)-\psi_1(\alpha(0,1),1,1)=\alpha(-\psi(0,1,1),1)=\sigma(z)+s
\]
i.e. $\phi_1=\sigma\circ\phi\circ\sigma^{-1}$ and $z_1-\sigma(z)=\phi_1(s)\in \phi_1(F)$.

We now prove $\impliedby$.
Let $s\in F$ be such that $z_1-\sigma(z)=\phi_1(s)$ and
define $\Theta:F\times\Z/2\Z\to F\times\Z/2\Z$ as 
\begin{align*}
   \Theta(f,k)&=\begin{cases}
(\sigma(f),0)&\text{ if }k=0\\
(\sigma(f)+s,1)&\text{ if }k=1
\end{cases}\\
&=\left(\sigma(f)+\frac{1-(-1)^k}{2}s,k\right). 
\end{align*}

for all $(f,k)\in F\times\Z/2\Z$.
We claim that $\Theta$ is an isomorphism of skew braces from $A$ to $A_1$.
\begin{align*}
&\Theta(f_1,k_1)+_1\Theta(f_2,k_2)\\ 
&=\left(\sigma(f_1)+\frac{1-(-1)^{k_1}}{2}s,k_1\right)+_1\left(\sigma(f_2)+\frac{1-(-1)^{k_2}}{2}s,k_2\right)\\
&=\left(\sigma(f_1)+\frac{1-(-1)^{k_1}}{2}s+(-1)^{k_1}\left(\sigma(f_2)+\frac{1-(-1)^{k_2}}{2}s\right)+k_1k_2y_1,k_1+k_2\right)\\
&=\left(\sigma(f_1+(-1)^{k_1}f_2+k_1k_2y)+\frac{1-(-1)^{k_1+k_2}}{2}s,k_1+k_2\right)\\
&=\Theta\big(f_1+(-1)^{k_1}f_2+k_1k_2y,k_1+k_2\big)\\
&=\Theta\big((f_1,k_1)+(f_2,k_2)\big),
\end{align*}
for all $(f_1,k_1),(f_2,k_2)\in F\times\Z/2\Z$.
So $\Theta$ is a homomorphism between the additive groups.
Moreover, observe that for all $f\in F$ and $k_1,k_2\in\Z/2\Z$
\begin{align*}
    \sigma\left(\psi(f,k_1,k_2)\right)&
    =\frac{1-(-1)^{k_2}}{2}\left(\phi_1(\sigma(f_1))-\frac{1-(-1)^{k_1}}{2}(z_1-\phi_1(s))\right)\\
    &=\psi_1\left(\sigma(f)+\frac{1-(-1)^{k_1}}{2}s,k_1,k_2\right).
\end{align*}
Thus for all $(f_1,k_1),(f_2,k_2)\in F\times \Z/2\Z$
\begin{align*}
&\Theta(f_1,k_1)\circ_1\Theta(f_2,k_2)\\
&=\left(\sigma(f_1)+\frac{1-(-1)^{k_1}}{2}s,k_1\right)\circ_1\left(\sigma(f_2)+\frac{1-(-1)^{k_2}}{2}s,k_2\right)\\
&=\Bigg(\sigma(f_1)+(-1)^{k_1}\sigma(f_2)+\psi_1\left(\sigma(f_1)+\frac{1-(-1)^{k_1}}{2}s,k_1,k_2\right)+\\
&\qquad+k_1k_2 y_1+\frac{1-(-1)^{k_1}+(-1)^{k_1}(1-(-1)^{k_2})}{2}s,k_1+k_2\Bigg)\\
&=\left(\sigma\left(f_1+(-1)^{k_1}f_2+\psi(f_1,k_1,k_2)+k_1k_2y\right)+\frac{1-(-1)^{k_1+k_2}}{2}s,k_1+k_2\right)\\
&=\Theta\big(f_1+(-1)^{k_1}f_2+\psi(f_1,k_1,k_2)+k_1k_2y,k_1+k_2\big)\\
&=\Theta\big((f_1,k_1)\circ(f_2,k_2)\big).
 \end{align*}
Therefore $\Theta$ is a skew brace homomorphism.
Moreover $(f,k)\in\ker\Theta$ if and only if
$\left(\sigma(f)+\frac{1-(-1)^{k}}{2}s,k\right)=(0,0)$, i.e. $k=0$ and $\sigma(f)=0$.
Thus, since $\sigma$ is an automorphism, $\ker\Theta=\ker\sigma\times\{0\}=\{(0,0)\}$ and $\Theta$ is an isomorphism.
\end{proof}

\begin{lem}
\label{lem:onesize map}
Let $F\neq\{0\}$ be a finite abelian group,
and $\phi\in\End(F)$, 
such that $\left|\ker\phi\right|\leq2$ and
$\phi^2=-2\phi$.

Then $F\cong \Z/2^i\Z\times\Z/2^j\Z\times G$, 
for some $j\in\{0,1\}$ and $i\geq j$
and where $G$ is an abelian group of odd size.

Moreover $\phi=(\alpha,-2\id_{G})$, where $\alpha\in \End(P_2)$ such that $\alpha(P_2)$ is cyclic of index 2 and $\alpha^2=-2\alpha$.
\end{lem}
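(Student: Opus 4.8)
The plan is to exploit the primary decomposition of the finite abelian group $F$ together with the quadratic relation $\phi^2=-2\phi$, which forces very rigid behaviour. First I would write $F=P_2\times G$, where $P_2$ is the Sylow $2$-subgroup and $G$ the complement of odd order. Both summands are characteristic, and since an endomorphism cannot raise the order of an element, $\phi(P_2)\subseteq P_2$ and $\phi(G)\subseteq G$; hence $\phi$ splits as $\phi=\phi_2\times\phi_G$ with $\phi_2=\phi|_{P_2}$ and $\phi_G=\phi|_G$. The relation $\phi^2=-2\phi$ descends to each factor, and $\ker\phi=\ker\phi_2\times\ker\phi_G$, so $|\ker\phi_2|\,|\ker\phi_G|\le 2$.

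On the odd part I would show $\phi_G=-2\id_G$. Since $|G|$ is odd, multiplication by $2$ is an automorphism of $G$; writing $\tfrac12$ for its inverse, the element $e:=-\tfrac12\phi_G\in\End(G)$ satisfies $e^2=\tfrac14\phi_G^2=\tfrac14(-2\phi_G)=-\tfrac12\phi_G=e$, so $e$ is idempotent and $G=\ker e\oplus\operatorname{im}e=\ker\phi_G\oplus\operatorname{im}e$, with $\phi_G=-2\id$ on $\operatorname{im}e$. As $\ker\phi_G$ has odd order and is contained in $\ker\phi$, which has order at most $2$, it must be trivial; hence $e=\id_G$ and $\phi_G=-2\id_G$. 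This also yields $\ker\phi=\ker\phi_2$, so $|\ker\phi_2|\le 2$.

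The substance is the analysis of $\alpha:=\phi_2$ on $P_2$. Put $N=\alpha(P_2)$. From $\alpha^2=-2\alpha$ I get $\alpha(w)=-2w$ for every $w\in N$, so the restriction $\alpha|_N$ is multiplication by $-2$ and its kernel is exactly the $2$-torsion $N[2]$. Since $N[2]\subseteq\ker\alpha$ has order at most $2$ and $N[2]\cong(\Z/2\Z)^{r}$ for $r$ the number of invariant factors of $N$, I conclude $N$ is cyclic. If $\ker\alpha$ is trivial, then $\alpha$ is an automorphism and $\alpha=-2\id$ is invertible on a $2$-group, forcing $P_2=0$; this is the degenerate case $\phi=-2\id_F$ with $F$ of odd order, where the $2$-part vanishes and the index clause is read vacuously. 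Otherwise $|\ker\alpha|=2$, so by the first isomorphism theorem $N$ is a cyclic subgroup of index $2$ in $P_2$.

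Finally I would classify the finite abelian $2$-groups $P_2$ possessing a cyclic subgroup $N$ of index $2$. Because $P_2/N$ is cyclic of order $2$, $P_2$ is generated by $N$ together with one coset representative, so it has at most two invariant factors: $P_2\cong\Z/2^i\Z\times\Z/2^j\Z$ with $i\ge j\ge 0$. If $j\ge 1$, then $N$, being cyclic of order $|P_2|/2=2^{i+j-1}$, provides an element of that order in $P_2$, whereas the exponent of $P_2$ is $2^{i}$; comparing gives $i+j-1\le i$, hence $j=1$. This yields the asserted form with $j\in\{0,1\}$ and $i\ge j$, and, combining with the odd part, $\phi=(\alpha,-2\id_G)$ with $\alpha(P_2)=N$ cyclic of index $2$ and $\alpha^2=-2\alpha$. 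I expect the only genuinely delicate point to be the $2$-torsion computation identifying $N$ as cyclic, together with keeping track of the degenerate $P_2=0$ case; the remaining steps are the standard structure theory of finite abelian groups.
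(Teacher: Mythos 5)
Your proposal is correct, and its first half coincides with the paper's: the paper also splits $F=P_2\times G$ along the primary decomposition, observes that $\left|\ker\phi\right|\le 2$ forces the odd component $\beta$ to be injective, hence an automorphism, and then cancels in $\beta^2=-2\beta$ to get $\beta=-2\id_G$ (your idempotent $e=-\tfrac12\phi_G$ reaches the same conclusion a little more laboriously, but correctly). The second half takes a genuinely different route. Both proofs rest on the same key inclusion --- the $2$-torsion of the image lies in the kernel, which the paper writes as $\alpha(P_2)\cap T\subseteq\ker\alpha$ with $T$ the subgroup of involutions of $P_2$, and you write as $N[2]=\ker\left(\alpha|_N\right)\subseteq\ker\alpha$ --- but they exploit it in opposite orders. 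The paper first bounds the rank of $P_2$: from $[P_2:\alpha(P_2)]=\left|\ker\alpha\right|=2$ and the product formula it gets
\[
\frac{|T|}{2}=\frac{\left|\alpha(P_2)\right||T|}{|P_2|}\leq \left|\alpha(P_2)\cap T\right|\leq \left|\ker\alpha\right|=2,
\]
so $|T|\le 4$ and $P_2$ has at most two cyclic factors; only afterwards does it deduce that $\alpha(P_2)$ is cyclic (its $2$-torsion has order at most $2$) of index $2$, whence $j\le 1$. You instead prove cyclicity of $N=\alpha(P_2)$ first, obtain the rank bound from the elementary fact that a group with a cyclic subgroup of index $2$ is $2$-generated, and get $j\le 1$ by comparing $|N|=2^{i+j-1}$ with the exponent $2^i$; this trades the subgroup-product inequality for a generation-and-exponent argument of comparable length. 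One point where your write-up is actually more careful than the paper's: you isolate the degenerate case $\ker\alpha=\{0\}$, which forces $P_2=\{0\}$. This case is compatible with the hypotheses (take $F$ of odd order and $\phi=-2\id_F$), and it does occur in the downstream application, where $i=0$ is explicitly allowed; the paper's proof silently skips it (its assertion $\alpha\notin\Aut(P_2)$ fails for $P_2=\{0\}$), and the index-two clause of the statement must then be read vacuously, exactly as you note.
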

\begin{proof}
    $F$ is a finite abelian group, thus $F\cong P_2\times G$,
    where $P_2$ is the Sylow 2-subgroup of $F$.
    We can also see $\phi=(\alpha,\beta)\in \End(P_2)\times \End(G)$.
    By assumption $2\geq \left|\ker\phi\right|=\left|\ker\alpha\right|\left|\ker\beta\right|$, but $2\nmid |G|$, thus $\beta\in\Aut(G)$ and $\left|\ker\phi\right|\leq 2$.
    Since $\beta^2(g)=-2\beta(g)$ for all $g\in G$,
    then $\beta(g)=-2g$ for all $g\in G$.
    Moreover, we can write $P_2=\bigtimes_{i\geq 1}\Z_{2^i}^{n_i}$ for some $n_i\geq0$ and denote 
    by $T=\bigtimes_{i\geq 1}(2^{i-1}\Z_{2^i})^{n_i}$ the subgroup of involutions.
    Since $\alpha^2(x)=-2\alpha(x)$ for all $x\in P_2$,
    we have that $\alpha\notin \Aut(P_2)$ and
    $\alpha(P_2)\cap T\subseteq \ker\alpha.$
     Hence, $[P_2:\alpha(P_2)]=\left|\ker\alpha\right|=2$ and
     \[
     \frac{\prod_{i\geq 1}2^{n_i}}{2}=\frac{|T|}{2}=\frac{\left|\alpha(P_2)\right||T|}{|P_2|}\leq \frac{\left|\alpha(P_2)\right||T|}{\left|\alpha(P_2)+T\right|}=\left|\alpha(P_2)\cap T\right|\leq \left|\ker\alpha\right|=2.
     \]
    Therefore there exist $i\geq j\geq 0$ such that $P_2=\Z/2^i\Z\times\Z/2^j\Z$.
    Moreover, the number of involutions of $\alpha(P_2)$ is
    $\left|\alpha(P_2)\cap T\right|\leq \left|\ker\alpha\right|=2$.
    Thus $\alpha(P_2)$ is cyclic of index $2$
    and so $j\leq 1$.
\end{proof}

\begin{lem}
\label{lem:onevertex:restriction}
Let $F\neq\{0\}$ be an abelian group,
$y\in F$ such that $2y=0$, and
let $\psi\colon F\times\Z/2\Z\times\Z/2\Z\to F$ be the map
\[
\psi(f,k_1,k_2)=\frac{1-(-1)^{k_2}}{2}\left(\phi(f)-\frac{1-(-1)^{k_1}}{2}z\right),
\]
where $\phi\in\End(F)$, $z\in F$.
Suppose that $\psi$ is surjective,
$\phi(z)=\phi(y)-2z$,
and $\phi^2=-2\phi$.
Then there exists an abelian group $G$ of odd order such that
either $F\cong \Z/2\Z/\times \Z/2\Z\times G$
and $\phi=(\alpha,-2\id_G)$, for $\alpha\in\End(\Z/2\Z/\times \Z/2\Z)$
such that $\alpha(P_2)=\ker\alpha$ and it is a subgroup of order 2,
or $F\cong \Z/2^i\Z\times G$ and $\phi=-2\id_F$.
\end{lem}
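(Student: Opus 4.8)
The plan is to read off the image of $\psi$ to bound $|\ker\phi|$, then feed $\phi$ into Lemma~\ref{lem:onesize map} to obtain a coarse decomposition of $F$, and finally use the relation $\phi(z)=\phi(y)-2z$ together with $2y=0$ to pin down $\phi$ on the $2$-part. The two alternatives in the conclusion will correspond exactly to the parameter $j\in\{0,1\}$ produced by Lemma~\ref{lem:onesize map}.

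Since $\psi(f,k_1,0)=0$, $\psi(f,0,1)=\phi(f)$ and $\psi(f,1,1)=\phi(f)-z$, the image of $\psi$ is $\phi(F)\cup(\phi(F)-z)$, so surjectivity means $\phi(F)\cup(\phi(F)-z)=F$. If $z\in\phi(F)$ this forces $\phi(F)=F$, hence $\phi$ is an automorphism and $\phi^2=-2\phi$ gives $\phi=-2\id_F$, which is invertible only for $|F|$ odd; this is the degenerate instance of the second alternative. Otherwise $\phi(F)$ and $\phi(F)-z$ are distinct cosets covering $F$, so $[F:\phi(F)]=2$ and $|\ker\phi|=2$. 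In all cases $|\ker\phi|\le2$, so Lemma~\ref{lem:onesize map} yields $F\cong\Z/2^i\Z\times\Z/2^j\Z\times G$ with $G$ of odd order, $\phi=(\alpha,-2\id_G)$, $\alpha^2=-2\alpha$, and $W:=\alpha(P_2)$ cyclic of index $2$ in $P_2$, where $P_2$ is the $2$-part. I then split on $j$, and I expect the case $j=1$ to be the main obstacle.

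For $j=1$ I claim $i=1$. Since $W\cong\Z/2^i\Z$ is generated by an element of maximal order in $P_2=\Z/2^i\Z\times\Z/2\Z$, it is a direct summand; fix $P_2=\langle a\rangle\oplus\langle b\rangle$ with $W=\langle a\rangle$, $|a|=2^i$, $|b|=2$. Writing $\alpha(a)=pa$ and $\alpha(b)=qa$ (both lie in $W$), the relation $2\alpha(b)=\alpha(2b)=0$ gives $2^{i-1}\mid q$, so $q$ is even once $i\ge2$; as $\langle pa,qa\rangle=W=\langle a\rangle$, this forces $p$ odd. But evaluating $\alpha^2=-2\alpha$ at $a$ gives $p(p+2)\equiv0\pmod{2^i}$, impossible for odd $p$ when $i\ge2$. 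Hence $i=1$ and $P_2\cong\Z/2\Z\times\Z/2\Z$. On $\Z/2\Z\times\Z/2\Z$ one has $-2\id=0$, so $\alpha^2=-2\alpha$ becomes $\alpha^2=0$, giving $\alpha(P_2)\subseteq\ker\alpha$; since both have order $2$ they coincide, which is precisely the first alternative.

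For $j=0$, $P_2=\Z/2^i\Z$ is cyclic and I must improve Lemma~\ref{lem:onesize map} to $\alpha=-2\id$. If $i=0$ then $F=G$ is odd and $\phi=-2\id_F$ immediately. If $i\ge1$, write $\alpha$ as multiplication by an integer $c$; since $\alpha(P_2)$ is the unique index-$2$ subgroup $2\Z/2^i\Z$ of $P_2$, the integer $c$ is even and $\phi(F)=2\Z/2^i\Z\times G$. Surjectivity forces $z\notin\phi(F)$, so the $P_2$-component $z_2$ of $z$ is a unit modulo $2^i$. Restricting $\phi(z)=\phi(y)-2z$ to $P_2$ and using $2y=0$, so that $y_2\in\{0,2^{i-1}\}$ and hence $\alpha(y_2)=cy_2=0$, we obtain $(c+2)z_2\equiv0\pmod{2^i}$; as $z_2$ is a unit this gives $c\equiv-2$, whence $\alpha=-2\id$ and $\phi=-2\id_F$. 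This is the second alternative, and the two cases together establish the dichotomy.
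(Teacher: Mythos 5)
Your proof is correct, and it diverges from the paper's in one substantive step. Both arguments start identically: surjectivity of $\psi$ gives $\psi(F)=\phi(F)\cup(\phi(F)-z)=F$, hence $\left|\ker\phi\right|\leq 2$, and then Lemma~\ref{lem:onesize map} produces $F\cong\Z/2^i\Z\times\Z/2^j\Z\times G$ with $\phi=(\alpha,-2\id_G)$ and $\alpha(P_2)$ cyclic of index $2$; both also finish the cyclic case $j=0$ the same way, using $z\notin\phi(F)$ (so the $P_2$-component of $z$ is a unit) together with $\phi(z)=\phi(y)-2z$ and $2y=0$ to force $\alpha=-2\id$. The genuine difference is how the combination $j=1$, $i\geq 2$ is excluded. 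The paper excludes it using the \emph{extra} hypotheses: from surjectivity and $\phi(z)=\phi(y)-2z$ it shows $\phi(x)\in\{-2x,\,-2x-\phi(y)\}$ for every $x$, hence $\alpha(P_2)\subseteq 2P_2+T$ where $T$ is the subgroup of involutions, and a cyclic group of order $2^i$ cannot sit inside a group of exponent $2^{i-1}$. You instead exclude it purely at the level of $\alpha$: since $\alpha(P_2)$ is cyclic of maximal order $2^i$ it is a direct summand, so $P_2=\langle a\rangle\oplus\langle b\rangle$ with $\alpha(a)=pa$, $\alpha(b)=qa$; then $\alpha(2b)=0$ forces $2^{i-1}\mid q$, generation of the image forces $p$ odd, and $\alpha^2=-2\alpha$ gives $p(p+2)\equiv 0\pmod{2^i}$, impossible for odd $p$ when $i\geq 2$. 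Your route is self-contained and in fact proves something slightly sharper: under the hypotheses of Lemma~\ref{lem:onesize map} alone (no surjectivity of $\psi$, no relation between $y$ and $z$), the case $P_2\cong\Z/2^i\Z\times\Z/2\Z$ with $i\geq 2$ already cannot occur, so the data $(y,z)$ is only needed to pin down $\alpha=-2\id$ in the cyclic case. The paper's route, by contrast, reuses the same coset-covering observation in both branches, keeping the argument uniform. A minor further point in your favour: you make explicit at the outset the dichotomy $z\in\phi(F)$ (forcing $\phi$ to be an automorphism, $\phi=-2\id_F$, and $|F|$ odd) versus $z\notin\phi(F)$ (forcing $[F:\phi(F)]=2$), which the paper leaves implicit when it later asserts $z_1\in P_2\setminus\alpha(P_2)$.
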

\begin{proof}
Since $\psi(F)=\phi(F)\cup (\phi(F)-z)$ and
$\psi$ is surjective, $\left|\ker\phi\right|\leq 2$. 
Moreover $\phi^2(f)=-2\phi(f)$ for all $f\in F$.
Therefore, by Lemma~\ref{lem:onesize map}, $F=P_2\times G$,
where $P_2=\Z/2^i\Z\times\Z/2^j\Z$ for some $j\in\{0,1\}$ and $i\geq j$,
$G$ is an abelian group of odd size and $\phi=(\alpha,-2\id_G)$. 
If $i=0$, then $F=G$ and $\phi=-2\id_G$.

Let $i>0$. The subgroup of involutions of $F$ is
$T=2^{i-1}\Z/2^i\Z\times\Z/2^j\Z\times \{0\}$.
If $x\in \phi(F)=\alpha(P_2)\times G$, then $\phi(x)=-2x$.
Otherwise, by the surjectivity of $\psi$, it follows that $\phi(F)\cup (\phi(F)-z)=F$. Hence, we have that
$x+z\in \phi(F)$ and 
\[
\phi(x)=-2(x+z)-\phi(z)=-2(x+z)-\phi(y)+2z=-2x-\phi(y).
\]
Since $y\in T$, we have that $\alpha(P_2)\subseteq 2P_2+T\subseteq
\begin{cases}
    \Z/2\Z\times\Z/2^j\Z\times G&\text{ if }i=1\\
   2\Z/2^i\Z\times\Z/2^j\Z\times G &\text{ if }i>1
\end{cases}.$
But, by Lemma~\ref{lem:onesize map}, $\alpha(P_2)$ is a cyclic group of index 2,
thus either $j=0$ or $i=j=1$. If $i=j=1$, then $P_2=\Z/2\Z\times \Z/2\Z$ and $\alpha^2=0$, so $\alpha(P_2)=\ker\alpha$.

If $j=0$, then $P_2=\Z/2^i\Z$ and $\alpha(x)=lx$ for some $l\in \Z/2^i\Z$.
We also know that $\alpha(P_2)=2\Z/2^i\Z$ and $\ker\alpha=2^{i-1}\Z/2^i\Z$,
hence $l=2d$ for some $d\in \Z/2^i\Z$ such that $d\equiv1\pmod{2}$.
We can write $y=(2^{i-1}y_1,0)\in P_2\times G$ and $z=(z_1,z_2)\in (P_2\times G)$.
Since $z_1\in P_2\setminus\alpha(P_2)$, we have that $z_1\equiv 1\pmod{2}$.
Finally, $\phi(z)=\phi(y)-2z$ implies
$(2dz_1,-2z_2)=(2^idy_1-2z_1,-2z_2)$, so either $i=1$ and $l=0$ or $i>1$ and $d=-1$.
In any case, $l=-2\pmod{2^i}$ and $\phi=-2\id_F$.
\end{proof}

\begin{pro}
\label{pro:onevertex 8d}
    Let $A$ be a finite skew brace such that $\Lambda(A)$ has only one vertex.
    If $\Fix(A)\cong\Z/2\Z\times\Z/2\Z\times G$,
    for some abelian group $G$ of odd order $d$,
    then $A$ is isomorphic to the skew brace $K_{8d}(G)$
    on $\Z/2\Z\times\Z/2\Z\times G\times \Z/2\Z$ with
\begin{align*}
    &(a_1,b_1,g_1,k_1)+(a_2,b_2,g_2,k_2)\\
    &=\big(a_1+a_2,b_1+b_2+k_1k_2,g_1+(-1)^{k_1}g_2,k_1+k_2\big)
\end{align*}
and
\begin{align*}
    &(a_1,b_1,g_1,k_1)\circ(a_2,b_2,g_2,k_2)\\
    &=\big(a_1+a_2+k_2b_1,b_1+b_2,(-1)^{k_2}g_1+(-1)^{k_1}g_2,k_1+k_2\big),
 \end{align*}
 for all $(a_1,b_1,g_1,k_1),(a_2,b_2,g_2,k_2)\in \Z/2\Z\times\Z/2\Z\times G\times \Z/2\Z$.

 Moreover, for any odd number $d$ and any abelian group $G$ of order $d$ the graph $\Lambda(K_{8d}(G))$ has only one vertex and
 $\Fix(K_{8d}(G))\cong\Z/2\Z\times\Z/2\Z\times G$.
\end{pro}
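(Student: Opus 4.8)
The plan is to handle the two directions by leaning entirely on the structural machinery already established. For the forward direction, I would first invoke Theorem~\ref{thm: onevertex:general:structure} to realise $A$ as a skew brace on $F\times\Z/2\Z$ with $F=\Fix(A)$ and parameters $\phi\in\End(F)$, $y,z\in F$ subject to the stated relations. Since $\Fix(A)\cong\Z/2\Z\times\Z/2\Z\times G$ with $G$ of odd order, Lemma~\ref{lem:onevertex:restriction} forces the first of its two alternatives (the second has cyclic $2$-part), so $\phi=(\alpha,-2\id_G)$ with $\alpha\in\End(\Z/2\Z\times\Z/2\Z)$ satisfying $\alpha(P_2)=\ker\alpha$ of order two (hence $\alpha^2=0$). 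The goal is then to show that the remaining freedom in $\alpha,y,z$ collapses, up to isomorphism, to the single skew brace $K_{8d}(G)$.

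The key preliminary step is to record the parameters of $K_{8d}(G)$ itself: reading off its two operations against the template of Theorem~\ref{thm: onevertex:general:structure}, one checks that $K_{8d}(G)$ corresponds to $\phi_K(a,b,g)=(b,0,-2g)$ and $y_K=z_K=(0,1,0)$, and that these satisfy $2y_K=0$, $\phi_K(z_K)=\phi_K(y_K)-2z_K$, $\phi_K^2=-2\phi_K$, and surjectivity of the associated $\psi$. This simultaneously settles the converse: $K_{8d}(G)$ is a genuine instance of the construction, so the converse part of Theorem~\ref{thm: onevertex:general:structure} immediately gives that $\Lambda(K_{8d}(G))$ has one vertex and $\Fix(K_{8d}(G))=F\times\{0\}\cong\Z/2\Z\times\Z/2\Z\times G$.

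It then remains to match $A$ with $K_{8d}(G)$ through the isomorphism criterion of Theorem~\ref{thm: onevertex isomorphic condition}, which is where the real work lies. First I would pin down $y$ and $z$ from the constraints: writing $y=y_{P_2}$ (it is $2$-torsion, hence lies in $P_2$) and $z=z_{P_2}+z_G$, the relation $\phi(z)=\phi(y)-2z$ gives $\alpha(z_{P_2}-y_{P_2})=0$, i.e.\ $z_{P_2}-y_{P_2}\in\ker\alpha=\alpha(P_2)$, while surjectivity of $\psi$ (equivalently $\phi(F)\cup(\phi(F)-z)=F$) forces $z_{P_2}\notin\alpha(P_2)$; hence both $y_{P_2}$ and $z_{P_2}$ avoid the line $\alpha(P_2)$. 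Since any such nilpotent $\alpha$ is conjugate in $\Aut(P_2)=GL_2(\mathbb{F}_2)$ to $\alpha_K$, I would take $\sigma=(\sigma_2,\id_G)\in\Aut(F)$ with $\sigma_2\alpha\sigma_2^{-1}=\alpha_K$; because $\sigma_2$ carries $\alpha(P_2)$ onto $\alpha_K(P_2)=\langle(1,0)\rangle$, the images $\sigma_2(y_{P_2})$ and $\sigma_2(z_{P_2})$ both have second coordinate one. The centraliser of $\alpha_K$ in $GL_2(\mathbb{F}_2)$ consists of the two maps $(a,b)\mapsto(a+qb,b)$, and composing with the one for which $q$ equals the first coordinate of $\sigma_2(y_{P_2})$ arranges $\sigma(y)=y_K=(0,1,0)$ without disturbing the conjugation $\sigma\phi\sigma^{-1}=\phi_K$. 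The third clause $z_K-\sigma(z)\in\phi_K(F)$ then holds automatically, since $\sigma(z)$ and $z_K$ share the same (nonzero) middle coordinate, so their difference lands in $\alpha_K(P_2)\times G=\phi_K(F)$. By Theorem~\ref{thm: onevertex isomorphic condition} this yields $A\cong K_{8d}(G)$.

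The main obstacle is the bookkeeping in that last paragraph: correctly extracting from the defining relations that $y_{P_2}$ and $z_{P_2}$ both lie off the line $\alpha(P_2)$, and then exhibiting one $\sigma$ that meets all three clauses of the criterion at once (conjugating $\alpha$ to $\alpha_K$, sending $y$ to $y_K$, and keeping $z_K-\sigma(z)$ inside $\phi_K(F)$). Everything else, namely the identification of the parameters of $K_{8d}(G)$ and the converse statement, is a direct, if slightly tedious, substitution into the formulas of Theorem~\ref{thm: onevertex:general:structure}.
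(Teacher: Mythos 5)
Your proposal is correct and follows essentially the same route as the paper: reduce to the normal form of Theorem~\ref{thm: onevertex:general:structure}, invoke Lemma~\ref{lem:onevertex:restriction} to force $\phi=(\alpha,-2\id_G)$, extract that $y_{P_2},z_{P_2}\notin\alpha(P_2)$ with $z_{P_2}-y_{P_2}\in\alpha(P_2)$, and conclude via the isomorphism criterion of Theorem~\ref{thm: onevertex isomorphic condition}, with the converse read off from the converse part of Theorem~\ref{thm: onevertex:general:structure}. The only (cosmetic) difference is in how the automorphism $\sigma$ is produced: the paper writes it down directly as $(x_1,x_2,x_3)\mapsto(x_1(h_1,h_2)+x_2(y_1,y_2),x_3)$ going from $K_{8d}(G)$ to $A$, while you build its inverse in two steps (conjugating $\alpha$ to the Jordan block $\alpha_K$ in $GL_2(\mathbb{F}_2)$, then correcting by a centralizer element to fix $y$), which amounts to the same map.
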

\begin{proof}
   By Theorem ~\ref{thm: onevertex:general:structure}, $A$ is isomorphic to
   a skew brace on the set $F\times\Z/2\Z$, for $F=\Z/2\Z\times\Z/2\Z\times G$,
   with operations defined by Equation ~\eqref{eq: onevertex operations} in Theorem ~\ref{thm: onevertex:general:structure}.
   
   The element $y\in \Z/2\Z\times\Z/2\Z\times G$ required for the definition of the addition
   has to be an involution, i.e. $y=(y_1,y_2,0)$, for some $y_1,y_2\in\Z/2\Z$.
   By  Lemma ~\ref{lem:onevertex:restriction}, the endomorphism $\phi$
   that defines the multiplication is $\phi=(\alpha,-2\id_G)$, for some $\alpha\in \End(\Z/2\Z\times\Z/2\Z)$ such that $\alpha(\Z/2\Z\times\Z/2\Z)=\ker\alpha$ and $\left|\ker\alpha\right|=2$.  
   Thus for all $x=(x_1,x_2,x_3)\in \Z/2\Z\times\Z/2\Z\times G$ and $k_1,k_2\in \Z/2\Z$
   \begin{align*}
       &\psi((x_1,x_2,x_3),k_1,k_2)\\
       &=\frac{1-(-1)^{k_2}}{2}\left((\alpha(x_1,x_2),-2x_3)-\frac{1-(-1)^{k_1}}{2}(z_1,z_2,z_3)\right),
   \end{align*}
    where $z=(z_1,z_2,z_3)\in \Z/2\Z\times\Z/2\Z\times G$
    such that $\psi$ is surjective,
    which means $z\notin \phi(\Z/2\Z\times\Z/2\Z\times G)=\alpha(\Z/2\Z\times\Z/2\Z)\times G$.
   The condition $\phi(z)=\phi(y)-2z$ becomes $\alpha(z_1,z_2)=\alpha(y_1,y_2)$.

   Let $(h_1,h_2)\in\Z/2\Z\times\Z/2\Z\setminus\{(0,0)\}$ such that 
   \[
   H=\langle (h_1,h_2)\rangle=\alpha(\Z/2\Z\times\Z/2\Z)=\ker\alpha.
   \]
   Then $(z_1,z_2)\notin H$ and so $\alpha(z_1,z_2)\ne(0,0)$. 
   
   Hence $\alpha(y_1,y_2)=\alpha(z_1,z_2)=(h_1,h_2)$ and $(z_1,z_2)-(y_z,y_2)\in H$.
   Observe that we obtain the skew brace $K_{8d}(G)$ choosing $\alpha:(x_1,x_2)\mapsto (x_2,0)$ and $y=(0,1,0)=z$.
   
   Fix any skew brace $A_1$ defined using $\alpha$ with
   $H=\langle(h_1,h_2)\rangle$, $y=(y_1,y_2,0)$ and $z=(z_1,z_2,z_3)\notin H\times G$.
   We claim that the map
   \[
   \sigma:(x_1,x_2,x_3)\mapsto (x_1(h_1,h_2)+x_2(y_1,y_2),x_3)
   \]
   satisfies the conditions
   of Theorem ~\ref{thm: onevertex isomorphic condition} for $A=K_{8d}(G)$ and $A_1$.
   First of all $\sigma(0,1,0)=(y_1,y_2,0)$.
   Moreover, since $(z_1,z_2)-(y_z,y_2)\in H$,
   \[
   (z_1,z_2,z_3)-\sigma(0,1,0)=(z_1,z_2,z_3)-(y_1,y_2,0)\in H\times G.
   \]
   And finally, recalling that $\alpha(y_1,y_2)=\alpha(z_1,z_2)=(h_1,h_2)$, 
   we have that for all $(x_1,x_2,x_3)\in \Z/2\Z\times\Z/2\Z\times G$
   \begin{align*}
    \phi\sigma(x_1,x_2,x_3)&=\big(\alpha(x_1(h_1,h_2)+x_2(y_1,y_2)),-2x_3\big)=(x_2(h_1,h_2),-2x_3)\\
    &=\sigma(x_2,0,-2x_3).   
   \end{align*}
      Thus $A_1$ is isomorphic to $K_{8d}(G)$.
    Conversely, let $d$ be an odd number and $G$ an abelian group of order $d$.
    By the construction above and using Theorem~\ref{thm: onevertex:general:structure}, $\Lambda(K_{8d}(G))$ has only one vertex and $\Fix(K_{8d}(G))=\Z/2\Z\times\Z/2\Z\times G\times\{0\}$.   
\end{proof}

\begin{rem}
Let $G$ and $G'$ be abelian groups of odd order $d$, then $G$ is isomorphic to $G'$ if and only if $K_{8d}(G)$ and $K_{8d}(G')$ are isomorphic skew braces. 
\end{rem}

\begin{pro}
\label{pro:onevertex 2^id}
    Let $A$ be a finite skew brace such that $\Lambda(A)$ has only one vertex.
    If $\Fix(A)\cong\Z/2^i\Z\times G$, for $i>0$ and
    for some abelian group $G$ of odd order $d$,
    then as a skew brace $A$ is isomorphic to $J_{2^{i+1}d}(G)$ on the set $\Z/2^i\Z\times G\times \Z/2\Z$ with
\[
    (a_1,g_1,k_1)+(a_2,g_2,k_2)=\big(a_1+(-1)^{k_1}a_2,g_1+(-1)^{k_1}g_2,k_1+k_2\big)    
\]
and
\begin{align*}
    &(a_1,g_1,k_1)\circ(a_2,g_2,k_2)\\
    &=\left((-1)^{k_2}a_1+(-1)^{k_1}a_2-\frac{1-(-1)^{k_1k_2}}{2},(-1)^{k_2}g_1+(-1)^{k_1}g_2,k_1+k_2\right)
\end{align*}
or
$A$ is isomorphic as a skew brace to $H_{2^{i+1}d}(G)$ on the set $\Z/2^i\Z\times G\times \Z/2\Z$ with
\[
    (a_1,g_1,k_1)+(a_2,g_2,k_2)=\big(a_1+(-1)^{k_1}a_2+2^{i-1}k_1k_2,g_1+(-1)^{k_1}g_2,k_1+k_2\big)
\]
and
\begin{align*}
    &(a_1,g_1,k_1)\circ(a_2,g_2,k_2)\\
    &=\left(a_1+(-1)^{k_1}a_2-\frac{1-(-1)^{k_1k_2}}{2}+2^{i-1}k_1k_2,(-1)^{k_2}g_1+(-1)^{k_1}g_2,k_1+k_2\right).
\end{align*}
Moreover $J_{2^{i+1}d}(G)$ and $H_{2^{i+1}d}(G)$ are not isomorphic.
\end{pro}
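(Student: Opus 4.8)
The plan is to run the general machinery of this section and then pin down the two admissible sets of structure data. First I would invoke Theorem~\ref{thm: onevertex:general:structure} to write $A$, up to isomorphism, as a skew brace on $F\times\Z/2\Z$ with $F=\Fix(A)\cong\Z/2^i\Z\times G$ and structure data $(y,\phi,z)$. Since $F$ has cyclic Sylow $2$-subgroup $\Z/2^i\Z$, it falls under the second alternative of Lemma~\ref{lem:onevertex:restriction}, which forces $\phi=-2\id_F$. The remaining data is then heavily constrained: $y$ must be an involution of $F$, and because $|G|$ is odd and $i>0$ the only involutions are $0$ and $(2^{i-1},0)$, so there are exactly two candidate values of $y$; moreover surjectivity of $\psi$ forces $z\notin\phi(F)=2F$, since $\psi(F\times\Z/2\Z\times\Z/2\Z)=\phi(F)\cup(\phi(F)-z)$.

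Next I would collapse the choices of $z$ using the isomorphism criterion of Theorem~\ref{thm: onevertex isomorphic condition}. Because $\phi=-2\id_F$ is central in $\End(F)$, the condition $\phi_1=\sigma\phi\sigma^{-1}$ holds for every $\sigma\in\Aut(F)$, so the criterion reduces to $\sigma(y)=y_1$ together with $z_1-\sigma(z)\in 2F$. Since $F/2F\cong\Z/2\Z$ (as $2G=G$ for $G$ of odd order), every admissible $z$ lies in the single nontrivial coset of $2F$; taking $\sigma=\id$ shows that, for a fixed $y$, all admissible $z$ yield isomorphic skew braces. On the other hand $\sigma(0)=0\neq(2^{i-1},0)$ for every automorphism, so the two values of $y$ cannot be matched. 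Hence there are at most two isomorphism classes, one for each value of $y$.

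I would then identify explicit representatives. Substituting $\phi=-2\id_F$ into the normal form, the $\circ$-operation simplifies to $(-1)^{k_2}f_1+(-1)^{k_1}f_2+k_1k_2(y-z)$ in the $F$-coordinate; taking $(y,z)=(0,(1,0))$ reproduces the operations defining $J_{2^{i+1}d}(G)$ verbatim, which settles the case $y=0$. For $y=(2^{i-1},0)$ I would exhibit $H_{2^{i+1}d}(G)$ as the corresponding class: either by producing an explicit additive automorphism of $F\times\Z/2\Z$ intertwining the normal form with the stated operations (absorbing the discrepancy between the $(-1)^{k_2}a_1$ term of the normal form and the $a_1$ term in the stated $\Z/2^i\Z$-coordinate), or by verifying directly that $J$ and $H$ are skew braces with fixed subgroup isomorphic to $\Z/2^i\Z\times G$ and one-vertex $\lambda$-graph (all immediate from the converse direction of Theorem~\ref{thm: onevertex:general:structure}), and then reading off the additive $2$-cocycle $y$ to place each in the correct class. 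This step is where I expect the main obstacle: the clean presentation of $H_{2^{i+1}d}(G)$ does not coincide on the nose with the raw output of the structure theorem for $i>1$, so one must either construct the intertwining isomorphism or re-check the axioms for the tidied-up form.

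Finally, for the non-isomorphism of $J_{2^{i+1}d}(G)$ and $H_{2^{i+1}d}(G)$ I would compare additive groups. In $J_{2^{i+1}d}(G)$ the additive group is the semidirect product $(\Z/2^i\Z\times G)\rtimes\Z/2\Z$ with the generator acting by inversion, and since $y=0$ every element $(m,1)$ satisfies $(m,1)+(m,1)=0$; together with the unique nonzero involution of $\Z/2^i\Z\times G$ this gives $2^{i}d+1$ involutions. In $H_{2^{i+1}d}(G)$, by contrast, the nontrivial cocycle forces $(m,1)+(m,1)=(2^{i-1},0,0)\neq0$ for every $m$, so the only involution is $(2^{i-1},0,0)$. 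As $2^{i}d+1>1$, the additive groups are non-isomorphic, and hence so are the two skew braces. The same invariant confirms that both families genuinely occur, completing the classification.
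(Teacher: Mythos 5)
Your argument follows the paper's proof almost step for step: apply Theorem~\ref{thm: onevertex:general:structure} to put $A$ in normal form on $F\times\Z/2\Z$ with data $(y,\phi,z)$, use Lemma~\ref{lem:onevertex:restriction} (second alternative, since the Sylow $2$-subgroup of $F$ is cyclic) to force $\phi=-2\id_F$, note that $y\in\{(0,0),(2^{i-1},0)\}$ and that surjectivity of $\psi$ forces $z\notin 2F$, collapse all admissible $z$ for a fixed $y$ via Theorem~\ref{thm: onevertex isomorphic condition} with $\sigma=\id$, and separate the two values of $y$ because $\sigma(0,0)\neq(2^{i-1},0)$ for every $\sigma\in\Aut(F)$. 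These are exactly the paper's steps; your added observations (that $\phi=-2\id_F$ commutes with every $\sigma$, so the conjugation condition is vacuous, and that $F/2F\cong\Z/2\Z$, so all admissible $z$ lie in a single coset) only make the collapse more transparent. Your involution count --- $2^id+1$ involutions in the additive group of $J_{2^{i+1}d}(G)$ versus exactly one, namely $(2^{i-1},0,0)$, in that of $H_{2^{i+1}d}(G)$ --- is a genuinely different and perfectly valid proof of non-isomorphism, robust in that it uses only the additive groups; the paper instead reuses its isomorphism criterion.

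The step you leave open --- matching the $y=(2^{i-1},0)$ class with the printed $H_{2^{i+1}d}(G)$ --- deserves a sharper verdict than ``main obstacle'': it cannot be closed by either route you propose, because the printed formula for the $H$-multiplication contains a sign error. Read literally, that operation is not associative once $i\geq 2$: taking $d=1$, $i=2$ and suppressing the trivial factor $G$, one computes $\bigl((1,1)\circ(0,1)\bigr)\circ(0,1)=(2,0)\circ(0,1)=(2,1)$, whereas $(1,1)\circ\bigl((0,1)\circ(0,1)\bigr)=(1,1)\circ(1,0)=(0,1)$. Hence no additive automorphism can intertwine the (associative) normal form with the printed operations, and ``re-checking the axioms for the tidied-up form'' would fail rather than succeed. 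The intended first coordinate is $(-1)^{k_2}a_1+(-1)^{k_1}a_2-\frac{1-(-1)^{k_1k_2}}{2}+2^{i-1}k_1k_2$, exactly parallel to $J$; with that correction, your own simplification of the normal form (whose $F$-coordinate is $(-1)^{k_2}f_1+(-1)^{k_1}f_2+k_1k_2(y-z)$) shows that the choice $(y,z)=\bigl((2^{i-1},0),(1,0)\bigr)$ yields $H$ verbatim, just as $(y,z)=\bigl((0,0),(1,0)\bigr)$ yields $J$ --- which is precisely how the paper's proof handles this step, reading both representatives off the normal form with no intertwining isomorphism needed. Note also that for $i=1$ the discrepancy disappears, since $-a_1=a_1$ in $\Z/2\Z$; this is consistent with your remark that the mismatch only occurs for $i>1$. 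So your instinct about this step was right, but the repair is to correct the stated formula, not to bridge the two presentations.
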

\begin{proof}
   By Theorem ~\ref{thm: onevertex:general:structure}, $A$ is isomorphic to
   a skew brace on $\Z/2^i\Z\times G\times\Z/2\Z$ 
   with operations defined by Equation ~\eqref{eq: onevertex operations} (Theorem ~\ref{thm: onevertex:general:structure}).
   
   The element $y\in \Z/2^i\Z\times G$ required for the definition of the addition
   has to be an involution, i.e. $y\in \{(0,0),(2^{i-1},0)\}$. By  Lemma ~\ref{lem:onevertex:restriction}, we have that the map $\phi\in\End(\Z/2^i\Z)$
   that defines the multiplication is $\phi=-2\id$. Thus for all $f\in \Z/2^i\Z$ and $k_1,k_2\in \Z/2\Z$
   \begin{align*}
       \psi(f,k_1,k_2)&=\frac{1-(-1)^{k_2}}{2}\left(-2f-\frac{1-(-1)^{k_1}}{2}z\right)\\
       &=(-1+(-1)^{k_2})f-\frac{1-(-1)^{k_1k_2}}{2}z,
   \end{align*}
   
   where $z\in \Z/2^i\Z\times G$ such that $\psi$ is surjective, i.e. $$z\notin \phi(\Z/2^i\Z\times G)=2\Z/2^i\Z\times G.$$
   Moreover, in this situation, the condition $\phi(z)=\phi(y)-2z$ is always fulfilled.

   Notice that we obtain the skew brace $J_{2^{i+1}d}(G)$ choosing $y=(0,0)$ and $z=(1,0)$
   and the skew brace $H_{2^{i+1}d}(G)$ choosing $y=(2^{i-1},0)$ and $z=(1,0)$.
   It remains to prove that $J_{2^{i+1}d}(G)$ and $H_{2^{i+1}d}(G)$ are not isomorphic 
   and that any other choice produces a skew brace isomorphic to either one of these two.
   
   The first claim is easy to prove since
   if $J_{2^{i+1}d}(G)$ and $H_{2^{i+1}d}(G)$ were isomorphic,
   by Theorem ~\ref{thm: onevertex isomorphic condition},
   there would be $\sigma\in\Aut(\Z/2^i\Z\times G)$ such that $\sigma(0,0)=\sigma(2^{i-1},0)$,
   which is not possible.

   Fix any skew brace $A_1$ defined using $y=(0,0)$ and $z\notin 2\Z/2^i\Z\times G$.
   Then $\sigma=\id_{\Z/2^i\Z\times G}$ satisfies the conditions
   of Theorem ~\ref{thm: onevertex isomorphic condition} for $A=J_{2^{i+1}d}(G)$ and $A_1$,
   thus $A_1$ is isomorphic to $J_{2^{i+1}d}(G)$.

   Let now $A_1$ be any skew brace 
   defined using $y=(2^{i-1},0)$ and $z\notin 2\Z/2^i\Z\times G$.
   Then $\sigma=\id_{\Z/2^i\Z\times G}$ satisfies the conditions
   of Theorem ~\ref{thm: onevertex isomorphic condition} for $A=H_{2^{i+1}d}(G)$ and $A_1$,
   thus $A_1$ is isomorphic to $H_{2^{i+1}d}(G)$.
\end{proof}

\begin{rem}
Let $G$ and $G'$ be abelian groups of odd order $d$, then $G$ is isomorphic to $G'$ if and only if $J_{2^{i+1}d}(G)$ and $J_{2^{i+1}d}(G')$ are isomorphic skew braces. 
Moreover, $G$ is isomorphic to $G'$ if and only if $H_{2^{i+1}d}(G)$ and $H_{2^{i+1}d}(G')$ are isomorphic skew braces. 
\end{rem}

\begin{thm}
\label{thm: onevertex:general}
A complete list of skew braces of size $2^md$, for $\gcd(2,d)=1$,
with one-vertex common divisor graph is as in Table~\ref{tab:onevertex}, where $G$ is any abelian group of order $d$
and $\A(d)$ is the number of isomorphism classes of 
abelian groups of order $d$.
\begin{table}[H]
\caption{The classification of skew braces
with a one-vertex $\lambda$-graph.}
\label{tab:onevertex}
\begin{center}
\begin{tabular}{|c|c|c|c|} 
 \hline
 $m$ & Isom. classes & Reference & \# Isom. classes\\ 
 \hline
 0 &  $-$ & Lemma~\ref{1vertex:leftnilp:lem} & 0\\ 
 \hline
 1 & $D_{2d}(G)$ & Theorem~\ref{1vertex:2odd:thm} & $\A(d)$ \\ 
 \hline
 \multirow{2}{*}{$2$}& $J_{4d}(G)$ & Proposition~\ref{pro:onevertex 2^id} & \multirow{2}{*}{$2\A(d)$}\\
    & $H_{4d}(G)$ & Proposition~\ref{pro:onevertex 2^id} & \\ 
\hline
 \multirow{3}{*}{3}& $K_{8d}(G)$ & Proposition~\ref{pro:onevertex 8d} & \multirow{3}{*}{$3\A(d)$}\\
    & $J_{8d}(G)$ & Proposition~\ref{pro:onevertex 2^id} & \\
    & $H_{8d}(G)$ & Proposition~\ref{pro:onevertex 2^id} & \\
\hline 
  \multirow{2}{*}{$\geq 4$}& $J_{2^md}(G)$ & Proposition~\ref{pro:onevertex 2^id} & \multirow{2}{*}{$2\A(d)$}\\
    & $H_{2^md}(G)$ & Proposition~\ref{pro:onevertex 2^id} & \\ 
 \hline
\end{tabular}
\end{center}
\end{table}
\end{thm}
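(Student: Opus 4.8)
The plan is to treat Theorem~\ref{thm: onevertex:general} as a synthesis of the structural results already obtained, organized according to the position of $\ker\lambda$ relative to $\Fix(A)$. So let $A$ be a finite skew brace with $\Lambda(A)$ a single vertex. By Corollary~\ref{1vertex:kercentral:cor} we have $|\ker\lambda|=2$, so $\ker\lambda\cap\Fix(A)$ is either trivial or all of $\ker\lambda$; these two alternatives are exhaustive and furnish the dichotomy driving the whole argument. In either case Lemma~\ref{1vertex:leftnilp:lem} forces $|A|=2|\Fix(A)|$, so writing $|A|=2^m d$ with $d$ odd already gives $m\geq 1$, which disposes of the $m=0$ row.

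First I would handle the case $\ker\lambda\cap\Fix(A)=\{0\}$. Here Theorem~\ref{1vertex:2odd:thm} applies directly: $A$ is isomorphic to $D_{2d}(F)$ with $F=\Fix(A)$ abelian of odd order $d$, so $m=1$, and Remark~\ref{rem:1vertex:2odd:iso} identifies the isomorphism classes with those of abelian groups of order $d$, giving exactly $\A(d)$ braces in the $m=1$ row.

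The bulk of the work lives in the complementary case $\ker\lambda\subseteq\Fix(A)$, where $|\Fix(A)|$ is even, hence $m\geq 2$. Here I would invoke the normal form of Theorem~\ref{thm: onevertex:general:structure}, realizing $A$ on $F\times\Z/2\Z$ from data $(F,y,\phi,z)$ with $F=\Fix(A)$, $\phi^2=-2\phi$, and $\psi$ surjective. Feeding these constraints into Lemma~\ref{lem:onevertex:restriction} splits $F$ into precisely two shapes: $F\cong\Z/2\Z\times\Z/2\Z\times G$ with $\phi=(\alpha,-2\id_G)$, or $F\cong\Z/2^i\Z\times G$ with $\phi=-2\id_F$ and $i\geq 1$, where $G$ is abelian of odd order $d$. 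Proposition~\ref{pro:onevertex 8d} pins the first shape down to $K_{8d}(G)$, which has $|A|=8d$ and so lands in the row $m=3$; Proposition~\ref{pro:onevertex 2^id} pins the second shape down to exactly $J_{2^{i+1}d}(G)$ and $H_{2^{i+1}d}(G)$, which have $|A|=2^{i+1}d$ and so land in the row $m=i+1$.

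Finally I would assemble the count row by row, which is the delicate point rather than any single computation. The key observation is that the first shape is rigidly fixed at $m=3$, while the second shape sweeps through all $m\geq 2$ via $i=m-1$. Thus $m=2$ receives only $J_{4d}(G),H_{4d}(G)$; the row $m=3$ receives $K_{8d}(G)$ from the first shape together with $J_{8d}(G),H_{8d}(G)$ from the second; and every $m\geq 4$ receives only $J_{2^md}(G),H_{2^md}(G)$. To see that these are genuinely distinct isomorphism classes I would note that $J$ and $H$ are non-isomorphic by the last assertion of Proposition~\ref{pro:onevertex 2^id}, that $K_{8d}(G)$ is distinguished from $J_{8d}(G),H_{8d}(G)$ because its fixed points $\Z/2\Z\times\Z/2\Z\times G$ are not isomorphic as a group to $\Z/4\Z\times G$, and that varying the isomorphism type of $G$ yields non-isomorphic braces by the remarks following Propositions~\ref{pro:onevertex 8d} and~\ref{pro:onevertex 2^id}. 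Multiplying by $\A(d)$ gives the entries $\A(d),2\A(d),3\A(d),2\A(d)$ of Table~\ref{tab:onevertex}. The main obstacle to watch for is exhaustiveness: I must ensure that the two cases on $\ker\lambda$ and the two shapes of $F$ cover every possibility without overlap, so that no brace is double-counted at $m=3$ and none is missed.
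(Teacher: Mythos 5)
Your proposal is correct and takes essentially the same approach as the paper: the paper gives no separate proof of Theorem~\ref{thm: onevertex:general} beyond the table itself, which---exactly as you do---synthesizes Lemma~\ref{1vertex:leftnilp:lem} (ruling out $m=0$), Theorem~\ref{1vertex:2odd:thm} with Remark~\ref{rem:1vertex:2odd:iso} (the $m=1$ row), and Theorem~\ref{thm: onevertex:general:structure}, Lemma~\ref{lem:onevertex:restriction}, and Propositions~\ref{pro:onevertex 8d} and~\ref{pro:onevertex 2^id} with their accompanying remarks (the rows $m\geq 2$). Your explicit dichotomy on $\ker\lambda\cap\Fix(A)$ and the distinctness checks (in particular separating $K_{8d}(G)$ from $J_{8d}(G)$ and $H_{8d}(G)$ via the isomorphism type of $\Fix(A)$) merely spell out what the paper leaves implicit.
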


\section{Skew braces with one-vertex \texorpdfstring{$\theta$}{theta}-graph}

\begin{lem}
\label{lem 1vertex theta is abelian}
Let $A$ be a finite skew brace. If $\Theta(A)$ has exactly one vertex, 
then $A$ is of abelian type.
\end{lem}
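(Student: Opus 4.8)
The plan is to argue by contradiction: assume $(A,+)$ is non-abelian and derive an impossible divisibility relation. Throughout write $Z=Z(A,+)$ and let $\Theta$ denote the unique non-trivial $\theta$-orbit. The first step is to identify $\Theta$ precisely as $A\setminus Z$. Since $(A,+)$ is non-abelian it has a non-central element $g$, and then $\Con(g)\subseteq\Theta(g)$ is non-trivial, so $\Theta(g)=\Theta$ and $\Theta$ contains the non-central element $g$. On the other hand, if $c\in Z$ then every $\lambda_x(c)$ is again central (automorphisms preserve the centre), so by Proposition~\ref{pro: subgraph} the orbit $\Theta(c)=\bigcup_{x}\Con(\lambda_x(c))$ consists only of central elements and therefore cannot equal $\Theta$. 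Hence no central element lies in $\Theta$, that is $\Theta\subseteq A\setminus Z$; as every non-central element lies in $\Theta$ by the argument above, $\Theta=A\setminus Z$ and $\Fix_\theta(A)=Z$.

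Next I would set up the counting. The class equation for the $\theta$-action reads $|A|=|Z|+|\Theta|$, and Proposition~\ref{fix:pro} (with $\mathbf{c}(|\Theta|)=1$, since $|\Theta|>1$) gives $|Z|\mid|\Theta|$. Writing $n=[A:Z]$ we thus have $|\Theta|=|Z|(n-1)$. By Proposition~\ref{pro: subgraph}, $\Theta$ is a disjoint union of conjugacy classes of $(A,+)$ all of one common size $s$; say there are $t$ of them, so $ts=|Z|(n-1)$. Two divisibility facts will drive the contradiction. First, for a non-central $g$ the centraliser $C_{(A,+)}(g)$ strictly contains $Z$, so its index $s=[(A,+):C_{(A,+)}(g)]$ satisfies $s\mid n$ and $s<n$. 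Second, the image $\lambda(A)\cong(A,\circ)/\ker\lambda$ acts on these $t$ classes by $\lambda_c\cdot\Con(h)=\Con(\lambda_c(h))$, and this action is transitive because $\Theta=\bigcup_x\Con(\lambda_x(h))$ for any non-central $h$; hence $t\mid|\lambda(A)|$, which in turn divides $|A|$.

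Finally comes the arithmetic squeeze. From $s\mid n$ we get $\gcd(s,n-1)=1$, so $ts=|Z|(n-1)$ forces $s\mid|Z|$; writing $|Z|=sw$ yields $t=w(n-1)$. Then $t\mid|A|=|Z|n=swn$ gives $(n-1)\mid sn$, and since $\gcd(n-1,n)=1$ we obtain $(n-1)\mid s$. Together with $1<s<n$ this forces $s=n-1$, whence $(n-1)\mid n$ and so $n=2$; but $[A:Z]=2$ makes $(A,+)/Z$ cyclic and $(A,+)$ abelian, a contradiction. I expect the main obstacle to be the two structural inputs feeding the count, namely pinning down $\Fix_\theta(A)=Z$ and extracting the transitive fusion of conjugacy classes that yields $t\mid|A|$; once these are in place the number-theoretic squeeze is short and self-contained.
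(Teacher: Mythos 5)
Your proof is correct, and it takes a genuinely different route through the core of the argument. The opening step coincides with the paper's: both show that $\Fix_\theta(A)=Z(A,+)$ and that the unique non-trivial orbit is exactly $A\setminus Z(A,+)$. After that the paths diverge. The paper exploits the fact that all elements of a $\theta$-orbit share the same additive order: this forces that order to be a prime power $p^s$, makes every Sylow $q$-subgroup ($q\neq p$) central, and via Schur--Zassenhaus reduces to the case where $(A,+)$ is a $p$-group; orbit--stabilizer applied to the orbit itself ($|\Theta(x)|$ divides $|A|^2$) then forces $[A:Z(A,+)]=2$, which is impossible. You never reduce to $p$-groups: instead you use Proposition~\ref{pro: subgraph} to write the orbit as $t$ conjugacy classes of a common size $s$, obtain $s\mid n$ and $s<n$ (with $n=[A:Z(A,+)]$) because the centralizer of a non-central element properly contains the centre, obtain $t\mid |A|$ because $\lambda(A)$ permutes the $t$ classes transitively, and then run an arithmetic squeeze ($s\mid |Z(A,+)|$, $(n-1)\mid s$, $s\leq n-1$) to force $n=2$, the same impossibility. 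Your version is more purely combinatorial: it avoids element orders, Sylow theory and Schur--Zassenhaus entirely, relying only on the class-fusion structure already isolated in Proposition~\ref{pro: subgraph}; the paper's version is longer but exposes extra structure along the way (the orbit consists of elements of $p$-power order, and $(A,+)$ is, up to a central Hall factor, a $p$-group). One cosmetic remark: your appeal to Proposition~\ref{fix:pro} is redundant, since $|Z(A,+)|$ divides $|\Theta|=|A|-|Z(A,+)|=|Z(A,+)|(n-1)$ automatically.
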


\begin{proof}
Suppose $(A,+)$ is non-abelian.
Let $\Theta(x)$ be the non trivial $\theta$-orbit.
Then $Z(A,+)\cap \Theta(x)=\emptyset$, otherwise,
being a characteristic subgroup,
$Z(A,+)$ would contain the whole orbit $\Theta(x)$, so $Z(A,+)=\Fix(A)$.
Thus $\Fix_\theta(A)=Z(A,+)$.
We claim that the additive order of $x$ is a prime power.
If not every Sylow $p$-subgroup of $(A,+)$ would be contained in the center,
which is not possible since $(A,+)$ is non-abelian.
Thus there exists a prime $p$ such that the additive order
of $x$ is a power of $p$.
Moreover for every prime $q\neq p$
every Sylow $q$-subgroup of $(A,+)$ is contained in the center.
Denote by $\mathcal{O}_{p'}$ a Hall $p'$-subgroup of $(A,+)$ and by $P$ a Sylow $p$-subgroup of $(A,+)$.
Then, by the Schur–Zassenhaus theorem and the fact that $\mathcal{O}_{p'}\subseteq Z(A,+)$, we get that
\[
(A,+)\cong \mathcal{O}_{p'}\times P.
\]
Thus $P$ is the unique Sylow $p$-subgroup of $(A,+)$ and it contains $\Theta(x)$. Moreover, setting $|P|=p^\alpha$, we have that $\Fix_\theta(A)=Z(A,+)$ has index $p^t$ for some $1<t\leq \alpha$.
Since $(A,+)$ is nonabelian, then $P$ must be non-abelian 
because $\mathcal{O}_{p'}$ is in the center.
But if $\mathcal{O}_{p'}$ is non-trivial, there must be elements 
of prime order $q\neq p$ in $\mathcal{O}_{p'}$, which implies the existence of 
a non-central element $y$ of order $p^s q$, for some $s\in \Z_{>1}$.
Therefore $y\in \Theta(x)$, which is a contradiction with the fact that the elements 
of $\Theta(x)$ have order a power of $p$.
Hence $A=P$ and so $|A|=p^\alpha$, $|Z(A,+)|=p^{\alpha-t}$ and 
$|\Theta(x)|=p^\alpha-p^{\alpha-t}=p^{\alpha-t}(p^t-1)$.
Moreover $|\Theta(x)|$ has to divide $|(A.+)\rtimes_\lambda(A,\circ)|$, i.e. 
$p^{\alpha-t}(p^t-1)$ divides $p^{2\alpha}$, which means that $p^t-1=1$.
Hence $Z(A,+)$ has index $p^t=2$, but this is not possible.
\end{proof}

\begin{thm}\label{thm:onevertextheta}
    Let $A$ be a finite skew brace of abelian type such that $\Theta(A)$ has only one vertex. Then $A$ is isomorphic to one of the following skew braces.
    \begin{itemize}
        \item The skew brace on $\Z/4\Z$, with multiplication given by $x\circ y=x+y+2xy$.
        \item The skew brace on $\Z/2\Z\times\Z/2\Z$, with multiplication 
        \[
        (x_1,y_1)\circ(x_2,y_2)=(x_1+x_2+y_1y_2, y_1+y_2).
        \]
        \item The skew brace on $\Z/2\Z\times\Z/4\Z$, with multiplication 
        \[
        (x_1,y_1)\circ (x_2,y_2)=\left(x_1+x_2+y_2\sum\limits_{i=1}^{y_1-1}i,y_1+y_2+2y_1y_2
        \right).
        \]
    \end{itemize}
\end{thm}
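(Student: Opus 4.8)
The plan is to reduce this classification to the one already obtained for the $\lambda$-graph in Theorem~\ref{thm: one vertex lambda of abelian type}, exploiting that for skew braces of abelian type the two graphs coincide, so that no new structural work is needed.

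First I would record the key identity. Since $A$ is of abelian type, the additive group $(A,+)$ is abelian, so conjugation in $(A,+)$ is trivial and for all $a,b,c\in A$ we have
\[
\theta_{(a,b)}(c)=a+\lambda_b(c)-a=\lambda_b(c).
\]
Hence $\theta_{(a,b)}=\lambda_b$ for every $(a,b)\in (A,+)\rtimes_\lambda (A,\circ)$, and consequently every $\theta$-orbit equals the corresponding $\lambda$-orbit: $\Theta(c)=\Lambda(c)$ for all $c\in A$. In particular the non-trivial $\theta$-orbits, their sizes, and therefore the entire adjacency relation coincide with those of the $\lambda$-graph, so $\Theta(A)=\Lambda(A)$ as graphs. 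This is exactly the coincidence already noted in the text for abelian type, and it is the only computation the argument really requires.

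With this in hand the conclusion is immediate. If $\Theta(A)$ has exactly one vertex, then so does $\Lambda(A)$, and since $A$ is assumed of abelian type, Theorem~\ref{thm: one vertex lambda of abelian type} applies verbatim and yields precisely the three isomorphism classes listed. I would simply invoke that theorem and observe that the three skew braces it produces are word-for-word those appearing in the present statement, which completes the proof.

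There is essentially no genuine obstacle here: the heavy lifting (the size reduction to $|A|\in\{4,8\}$ and the explicit identification of the three braces) is already done in the $\lambda$-case. The single point deserving care is the coincidence $\Theta(A)=\Lambda(A)$, which rests entirely on the abelianness of $(A,+)$ collapsing the inner-conjugation factor of the $\theta$-action; once that is stated, the result is a direct corollary. Note that Lemma~\ref{lem 1vertex theta is abelian} is not needed for this particular statement, since abelian type is already a hypothesis, but it explains why the abelian-type assumption is automatic and hence why this list is exhaustive among all skew braces with a one-vertex $\theta$-graph.
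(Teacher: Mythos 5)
Your proposal is correct and follows essentially the same route as the paper: the paper's proof likewise observes that abelian type forces $\Lambda(A)=\Theta(A)$ and then invokes Theorem~\ref{thm: one vertex lambda of abelian type}. Your explicit verification that $\theta_{(a,b)}=\lambda_b$ when $(A,+)$ is abelian, and your remark that Lemma~\ref{lem 1vertex theta is abelian} is not needed under the stated hypothesis, are both accurate refinements of the paper's one-line argument.
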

\begin{proof}
By Lemma~\ref{lem 1vertex theta is abelian}, $A$ is of abelian type, so $\Lambda(A)=\Theta(A)$ and we can use the classification in Theorem~\ref{thm: one vertex lambda of abelian type} to complete the proof.
\end{proof}

\section*{Acknowledgements}
The first author is partially supported by Vrije Universiteit Brussel via the projects OZR4014 and
OZR3762, and by Fonds Wetenschappelijk Onderzoek - Vlaanderen, via a PhD Fellowship fundamental
research, grant 11PIO24N and via the Senior Research Project G004124N.
The authors thank Leandro Vendramin for insightful discussions. Moreover, the authors thank the anonymous referee for their detailed reading of the manuscript and the helpful comments that were provided.
\bibliographystyle{abbrv}
\bibliography{refs}

\end{document}